\documentclass{aptpub}
\usepackage[utf8]{inputenc}
\usepackage[T1]{fontenc}
\usepackage{lmodern}

\usepackage{xcolor}
\usepackage{amsfonts,amsmath,aliascnt,amssymb,bbm,dsfont,mathrsfs}
\usepackage{enumerate,mathtools}
\usepackage[retainorgcmds]{IEEEtrantools}

\usepackage[numbers]{natbib} % required for correct citations

\usepackage{hyperref} % should be last
\hypersetup{final,bookmarksopen=true,hidelinks}
\usepackage[atend]{bookmark} % should be laster

%%% Line spacing (comment for wide, uncomment for single)

\authornames{JEAN BERTOIN AND ALEXANDER R. WATSON} % insert the authors here for use in running head
\shorttitle{Probabilistic aspects of critical growth-fragmentation equations} % insert short title here for use in running head

%%%%%%%% Put any of your own definitions here. %%%%%%%%

% APT has no QED symbol, so:
\let\qedhere\relax

% APT does not use amsthm, so we add autorefnames for hyperref compatibility:

\providecommand{\email}[1]{\href{mailto:#1}{\nolinkurl{#1}}}

\makeatletter
\newcommand{\itemlab}[2]{%
\def\@itemlabel{#1}
\item
\def\@currentlabel{#1}\label{#2}}
\makeatother

\let\epsilon\varepsilon

\def\R{\mathbb{R}}
\def\C{\mathbb{C}}
\def\N{\mathbb{N}}
\def\E{\mathbb{E}}
\def\P{\mathbb{P}}

\def\R{\mathbb{R}}

\def \e{{\mathrm{e}}}
\def \d{{\mathrm{d}}}

 % domain of #1

\newcommand\Zb{\mathbf{Z}}

\newcommand\Yy{\mathcal{Y}}
\newcommand\dd{\mathrm{d}}

\newcommand\Ind{\mathbbm{1}}
\newcommand\Indic[1]{\Ind_{\{#1\}}}

\let\from\colon

\newcommand\Aa{\mathcal{A}^{(\alpha)}}

\newcommand\skippar{\medskip\noindent}
\newcommand\define[1]{\emph{#1}}
\newcommand\FFt{(\mathscr{F}_t)_{t\ge 0}}
\newcommand\ssP{\mathrm{P}}
\newcommand\LevP{\P}
\newcommand\lP{\P}
\newcommand\ssE{\mathrm{E}}

\newcommand\Xp{X_{_+}}

\newcommand\Cb{\mathcal{C}_b(0,\infty)}
\newcommand\Ctest{\mathcal{C}_c^\infty}

\newcommand\ZZ{\mathbb{Z}}

\newcommand\tree{\mathcal{U}}
\newcommand\rays{\mathcal{V}}
\newcommand\eve{\varnothing}
\newcommand\parent{\prec}
\newcommand\parenteq{\preceq}
\newcommand\Xf{\mathcal{X}}
\newcommand\given{\mathbin{\lvert}}
\newcommand\dint{\displaystyle\int}

\newcommand\Nn{\mathcal{N}}

\DeclarePairedDelimiterX\ip[2]{\langle}{\rangle}{#1,#2}
\DeclarePairedDelimiter\abs{\lvert}{\rvert}

\DeclareMathOperator{\dom}{dom}

\newenvironment{eqnarr*}{\begin{IEEEeqnarray*}{rCl}}{\end{IEEEeqnarray*}\ignorespacesafterend}

\renewcommand{\eqref}[1]{\hyperref[#1]{(\ref*{#1})}}
\def\pref{\eqref} % w/e
\newcommand{\refpref}[2]{\hyperref[#2]{\ref*{#1}(\ref*{#2})}}
\newcommand{\autorefpref}[2]{\hyperref[#2]{\autoref*{#1}(\ref*{#2})}}
% Produces `#1 \ref#2' suitably linked -- e.g., \kindref{Chapter}{c:T0} shows that...
\newcommand{\kindref}[2]{\hyperref[#2]{#1~\ref*{#2}}}

% fix to allow labels in align
% \AtBeginDocument{\let\textlabel\label}

%%%%%%%%                                       %%%%%%%%

\numberwithin{equation}{section}  % If you number theorems, etc. within sections,
                                   % then please uncomment this line to number
                                   % equations with sections too.
%

\begin{document}

\title{Probabilistic aspects of critical \\ growth-fragmentation equations} % insert title - use \\ if it requires more than one line.

\authorone[University of Zurich]{Jean Bertoin} % Affiliation is just the name of your university or institution

\addressone{Institute of Mathematics, University of Zurich, Winterthurerstrasse 190, 8057 Z\"urich, Switzerland} % Your postal address goes here.

\authortwo[University of Manchester]{Alexander R. Watson}
\addresstwo{School of Mathematics, University of Manchester, Manchester, M13 9PL, UK}

\begin{abstract}
  The self-similar growth-fragmentation equation describes the evolution
  of a medium in which particles grow and divide as time proceeds, with the
  growth and splitting of each particle depending only upon its size.
  The critical case of the equation, in which the growth and division rates 
  balance one another, was
  considered by Doumic and Escobedo \cite{DouEsc} in the 
  homogeneous case where the rates do not depend on the particle size.
  Here, we study the general self-similar case, using a probabilistic approach
  based on L\'evy processes and positive self-similar Markov processes
  which also permits us to analyse quite general splitting rates.
  Whereas existence and uniqueness of the solution are rather easy to establish 
  in the homogeneous case, the equation in the non-homogeneous case
  has some surprising features.
  In particular, using the fact that certain 
  self-similar Markov processes can enter $(0,\infty)$ continuously from either $0$ 
  or $\infty$, we exhibit unexpected spontaneous generation of mass in the solutions.
\end{abstract}

\keywords{Growth-fragmentation equation;
  self-similarity;
  self-similar Markov process;
  branching process.} % insert keywords separated by a semicolon

\ams{35Q92}{45K05; 60G18; 60G51} % insert the primary Maths Subject Classification number in the first bracket
         % and the secondary ams number(s) in the second bracket
         % e.g. \ams{60E20}{49G03;49F10}

\section{Introduction}

The growth-fragmentation equation is a linear differential equation
intended to describe the evolution of  a medium in which particles grow and split as time passes.
It is frequently expressed in terms of the concentration of particles with size $x>0$ at time $t$,
say $u(t,x)$, as follows:
\begin{equation}\label{EqGFG}
  \partial_tu(t,x)+ \partial_x(\tau(x)u(t,x))+B(x)u(t,x)=\int_x^{\infty}k(y,x)B(y) u(t,y) \d y,
\end{equation}
where $\tau(x)$ is the speed of growth of a particle with size $x$,
$B(x)$ the rate at which a particle of size $x$ splits, and $k(y,x)=k(x-y,x)$
twice the probability density that a particle with size $x$ splits into two
particles with size $yx$ and $(1-y)x$ (the factor $2$ is due to the symmetry of the splitting events).
This type of equation 
has a variety of applications in mathematical modeling,
notably in biology where particles should be thought of as cells,
and has motivated several works in the recent years;
see, for example, \cite{DouEsc}, which also contains a summary of some
recent literature.

We are interested here in the situation  $\tau(x)=c x^{\alpha+1}$, $B(x)=x^{\alpha}$
for some $\alpha\in\R$  and $k$ has the form $k(y,x)=x^{-1}k_0(y/x)$;
for these parameters, \eqref{EqGFG} possesses a useful self-similarity property.  This is referred to
as the \emph{critical} case by Doumic and Escobedo \cite{DouEsc}, who studied in depth the
situation when, additionally, $\alpha=0$.
For our purposes, it will be more convenient to write the equation in
weak form, as follows.
% It is convenient to rewrite the equation in a weak form, that we now present.
For $x>0$ and $y\in[1/2,1)$, we write $(y\mid x)$ for the pair $\{yx,(1-y)x\}$, which we view 
as the dislocation of a mass $x$ into two smaller masses, and then for every function $f\from (0,\infty)\to \R$,  we set
$$f(y\mid x) \coloneqq f(yx)+f((1-y)x).$$
Consider test functions $f\in{\mathcal C}^{\infty}_c(0,\infty)$,
that is, $f$ is infinitely differentiable with compact support. For a measure
$\mu$ on $(0,\infty)$, we write $\ip{\mu}{f} \coloneqq \int_{(0,\infty)} f(x) \, \mu(\dd x)$.
By integrating \eqref{EqGFG}, we obtain the equation
\begin{equation}\label{EqFG}
  \partial_t \langle \mu_t, f\rangle
  = \left\langle \mu_t, {\mathcal L}f\right\rangle, 
\end{equation}
where $\mu_t(\d x) = u(t,x)\d x$ and the operator ${\mathcal L}$ has the form 
\begin{equation}\label{Eqop0}
  {\mathcal L}f(x) = x^{\alpha}\left( cxf'(x) + \int_{[1/2,1)}\left(f(y\mid x)-f(x)\right) K(\d y)\right) ,
\end{equation}
where
$$K(\d y) \coloneqq k_0(y)\d y = k_0(1-y)\d y\,,\qquad  y\in[1/2,1),$$
is referred to as the \emph{dislocation measure}.
The advantage of this formulation is that we do not require
absolute continuity of the solution $\mu_t$ or the dislocation measure $K$. 
More generally, one might also consider non-binary dislocation measures,
but we refrain from doing so in this work in order to simplify the presentation.

\skippar
In this article, we study the equation \eqref{EqFG} for operators of the form 
\begin{equation} \label{Eqop1}
  {\mathcal L}_{\alpha}f(x) 
  \coloneqq 
  x^{\alpha}\biggl( a x^2 f''(x)+ b xf'(x)
  + \int_{[1/2,1)}\bigl(f(y\mid x)-f(x)+xf'(x)(1-y)\bigr) K(\d y)\biggr) ,
\end{equation}
where $a\geq 0$, $b\in\R$, and we now only assume that the measure $K$ satisfies the weaker requirement
\begin{equation}\label{Eqcondnu}
  \int_{[1/2,1)}(1-y)^2 K(\d y)<\infty.
\end{equation}
Our notion of a \emph{solution} of \eqref{EqFG} is a collection of locally finite 
measures $(\mu_t)_{t\ge 0}$ on $(0,\infty)$ such that,
for every $f \in \Ctest(0,\infty)$ and $t\geq 0$, there is the identity 
$$ \langle \mu_t, f\rangle =  \langle \mu_0, f\rangle + \int_0^t   \left\langle \mu_s, {\mathcal L}f\right\rangle \d s .$$
(This requires implicitly that 
$s\mapsto \left\langle \mu_s, {\mathcal L}f\right \rangle$ be a well-defined, locally integrable function, 
and  in particular the family $(\mu_t)_{t\ge 0}$ is then vaguely continuous).

We offer a comparison between the original operator \eqref{Eqop0} and our operator \eqref{Eqop1}.
Besides the appearance of a second order derivative, 
there is a new term $xf'(x)(1-y)$ in the integral in \eqref{Eqop1}.
The latter should be interpreted as an additional growth term which, in some sense, 
balances the accumulation of small dislocations. 
We stress that \eqref{Eqcondnu} is the necessary and sufficient condition for \eqref{Eqop1} to be
well-defined, and  that when the measure $K$ is finite (or at least fulfills $\int_{[1/2,1)}(1-y) K(\d y)<\infty$),
every operator of the form \eqref{Eqop0} can also be expressed in the form
\eqref{Eqop1}. Our motivation for considering this more general setting stems from the recent 
work \cite{BeCF}, in which a new class of growth-fragmentation stochastic processes  is constructed
such that, loosely speaking, the strong rates of dislocation that would instantaneously shatter the
entire mass can be somehow compensated by an intense growth; the dislocation measure associated
with such a fragmentation process need only satisfy \eqref{Eqcondnu}.

In short, the purpose of this work is to demonstrate the usefulness of
some probabilistic methods for the study of these critical growth-fragmentation equations.
More precisely, we shall see that solutions to 
\eqref{EqFG} 
for ${\mathcal L}={\mathcal L}_{\alpha}$ can be related to 
the one-dimensional distributions of certain self-similar Markov processes,
and this will enable us to reveal some rather unexpected features of the former. 
Although in the homogeneous case $\alpha=0$, 
we establish existence and uniqueness of the solution in full generality, 
this feature is lost for $\alpha\neq 0$.
In particular, we shall see that 
under a fairly general assumption on the parameters of the model, 
the critical growth-fragmentation equation permits spontaneous generation,
i.e. there exist non-degenerate solutions starting from the null initial condition.

We need some notation before describing more precisely our main results. 
We first introduce the function $\kappa \from [0,\infty)\to (-\infty,\infty]$ which plays a major role in our approach and is given by:
\begin{equation}\label{eqkappa}
  \kappa(q)
  \coloneqq 
  a q^2 + (b-a) q + \int_{[1/2,1)}\left(y^q +(1-y)^q- 1 +q(1-y)\right) K(\d y) , \qquad q\geq 0.
\end{equation}
There are two principal `Malthusian hypotheses' which we will require when $\alpha\neq 0$:
\begin{enumerate}
  \itemlab{$(M_+)$}{i:M1}
    $\inf_{q\geq 0} \kappa(q) <0.$
  \itemlab{$(M_-)$}{i:M2}
    There exist $0\le \omega_{_-}< \omega_+$ and $\epsilon>0$ such that
    $\kappa(\omega_{_-})=\kappa(\omega_+)=0$ and
    \linebreak % (!) prevent math being split (!)
    $\kappa(\omega_--\epsilon)<\infty$.
\end{enumerate}

We now summarise our main results, 
deferring their proofs to the body of the article.

\begin{itemize}
  \item
    For $\alpha = 0$, the equation \eqref{EqFG} with operator \eqref{Eqop1}
    and initial condition $\mu_0 = \delta_1$ has a unique solution.
  
  \item
    For $\alpha < 0$: suppose that \ref{i:M1} holds. Then,
    the equation \eqref{EqFG} with operator \eqref{Eqop1}
    has a solution with initial condition $\mu_0 = \delta_1$.
    There exists further a non-degenerate solution started from $\mu_0=0$;
    in particular, uniqueness fails.

  \item
    For $\alpha > 0$: if \ref{i:M1} holds, then
    the equation \eqref{EqFG} with operator \eqref{Eqop1}
    has a solution with initial condition $\mu_0 = \delta_1$.
    If \ref{i:M2} holds,
%  and additionally $\kappa(\omega_{_-}-\epsilon) < \infty$ for some $\epsilon>0$,
    then there also exists a non-degenerate solution started from $\mu_0=0$;
    again, uniqueness fails.
\end{itemize}

We shall also observe that under essentially the converse assumption to \ref{i:M1},
namely that $\inf_{q\ge0} \kappa(q) >0$, 
the particle system that corresponds to the stochastic version of the model
may explode in finite time almost surely.
This is a strong indication that \eqref{EqFG} should have no global solution in the latter case.

\skippar
The rest of this article is organized as follows. 
The next section provides brief preliminaries on the function $\kappa$ and the use of the Mellin transform
in the study of growth-fragmentation equations.
Section \ref{s:hom} is devoted to the homogeneous case $\alpha=0$, and then the general self-similar case
$\alpha \neq 0$ is presented in section \ref{s:ss}.
In section \ref{s:explosion}, 
we investigate  a stochastic model related to the growth-fragmentation equation,
to demonstrate that explosion may occur when the Malthusian hypothesis fails.
Finally, in section \ref{s:bps},
we briefly discuss another interpretation of the growth-fragmentation equation 
in terms of branching particle systems and many-to-one formulas,
placing the results of sections \ref{s:hom} and \ref{s:ss} in context.

\section{The  Mellin transform and the growth-fragmentation equation}
\label{s:prelim}

We observe first that, for any $\alpha \in \R$, the operator 
${\mathcal L}_{\alpha}$ 
fulfills a self-similarity property.
Specifically,  
for every $c>0$, 
if we denote by $\varphi_c(x)=cx$ the dilation function with factor $c$,
then for a generic $f\in{\mathcal C}^{\infty}_c(0,\infty)$, 
there is the identity
\begin{equation}\label{EqSSL}  
  {\mathcal L}_{\alpha}(f\circ \varphi_c)=c^{-\alpha}\left({\mathcal L}_{\alpha}f\right)\circ \varphi_c.
\end{equation}
As a consequence, if $(\mu_t)_{t\geq 0}$ is a solution to \eqref{EqFG} for all
$f\in{\mathcal C}^{\infty}_c(0,\infty)$
with initial condition $\mu_0=\delta_1$,
and if $\tilde \mu_t$ denotes the image of $\mu_{t}$ by the dilation $\varphi_c$,
then $(\tilde \mu_{c^{\alpha}t})_{t\geq 0}$ is a solution to \eqref{EqFG} 
for all $f\in{\mathcal C}^{\infty}_c(0,\infty)$  with initial condition $\tilde\mu_0=\delta_c$.
For the sake of simplicity,
we shall therefore focus on the growth-fragmentation equation
with initial condition $\mu_0=\delta_1$,
since this does not induce any loss of generality.

Recall that the function $\kappa$ has been introduced in \eqref{eqkappa};
its domain  is clarified by the following result.
 
\begin{lem} \mbox{} \label{Lbounds}
  \begin{enumerate}[(i)]
    \item\relax\label{i:Lbounds:def}
       For every $q\geq 0$, $\kappa(q)$ is well-defined with values in $(-\infty,\infty]$.
       The function $\kappa$ is convex, and
       we define $\dom \kappa = \{ q \geq 0 : \kappa(q)<\infty\}$.
    \item\relax\label{i:Lbounds:dom}
      For $q\geq 0$, $\kappa(q)<\infty$ if and only if $\int_{[1/2,1)}(1-y)^qK(\d y)<\infty$,
      and in particular $[2,\infty)\subseteq \dom \kappa$.
    \item\relax\label{i:Lbounds:asym}
      For  every function $f$ in ${\mathcal C}^{\infty}_c(0,\infty)$, ${\mathcal L}_{\alpha}f$
      is a continuous function on $(0,\infty)$ and is identically $0$ in some neighborhood of $0$.
      Furthermore, ${\mathcal L}_{\alpha}f(x)=o(x^{q+\alpha})$ as $x\to \infty$ for every
      $q\in \dom \kappa$, and thus in particular for $q=2$.
 \end{enumerate}
\end{lem}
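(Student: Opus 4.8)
The plan is to treat the three items in turn, with one elementary estimate carrying most of the weight. For $q\ge 0$ and $y\in[1/2,1)$ I set $h_q(y)\coloneqq y^q-1+q(1-y)$, so that the integrand in \eqref{eqkappa} is $h_q(y)+(1-y)^q$. Since $h_q(1)=h_q'(1)=0$ and $h_q''(t)=q(q-1)t^{q-2}$ is bounded on $[1/2,1]$, a second-order Taylor expansion at $t=1$ furnishes a constant $C_q\ge 0$ with $\abs{h_q(y)}\le C_q(1-y)^2$ on $[1/2,1)$ (one may take $C_q=0$ when $q\in\{0,1\}$, and $h_q\ge 0$ pointwise when $q\ge 1$ by convexity of $t\mapsto t^q$). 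Because $(1-y)^q\ge 0$, this yields the pointwise lower bound $y^q+(1-y)^q-1+q(1-y)\ge -C_q(1-y)^2$; as the right-hand side is $K$-integrable by \eqref{Eqcondnu}, the negative part of the integrand in \eqref{eqkappa} is $K$-integrable and $\kappa(q)$ is a well-defined element of $(-\infty,\infty]$. For convexity I note that $q\mapsto y^q=\e^{q\log y}$ and $q\mapsto(1-y)^q$ are convex for each fixed $y$ and $q\mapsto aq^2+(b-a)q$ is convex since $a\ge 0$; the resulting pointwise convexity inequality survives integration against $K$ because all the integrands involved have negative part dominated by a fixed $K$-integrable function of the form $C(1-y)^2$. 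This settles \ref{i:Lbounds:def}, and then $\dom\kappa$ is an interval, being the effective domain of a convex function.

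For \ref{i:Lbounds:dom} I would split
\[ \kappa(q)=aq^2+(b-a)q+\int_{[1/2,1)}h_q(y)\,K(\d y)+\int_{[1/2,1)}(1-y)^q\,K(\d y). \]
The first three terms are finite --- the integral of $h_q$ because $\abs{h_q}\le C_q(1-y)^2$ together with \eqref{Eqcondnu} --- so $\kappa(q)<\infty$ if and only if $\int_{[1/2,1)}(1-y)^q\,K(\d y)<\infty$. For $q=2$ this last integral is finite by \eqref{Eqcondnu}, and for $q\ge 2$ one has $(1-y)^q\le(1-y)^2$ on $[1/2,1)$; hence $[2,\infty)\subseteq\dom\kappa$.

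For \ref{i:Lbounds:asym}, fix $f\in\Ctest(0,\infty)$ with $\operatorname{supp} f\subseteq[m,M]$ for some $0<m<M$, and write $\mathcal{L}_\alpha f(x)=x^\alpha\bigl(ax^2f''(x)+bxf'(x)+I(x)\bigr)$ with $I(x)=\int_{[1/2,1)}\Phi(x,y)\,K(\d y)$ and $\Phi(x,y)\coloneqq f(yx)-f(x)+xf'(x)(1-y)+f((1-y)x)$. From the identity $f(yx)-f(x)+xf'(x)(1-y)=\int_{yx}^{x}\bigl(f'(x)-f'(s)\bigr)\,\d s$ one gets $\abs{f(yx)-f(x)+xf'(x)(1-y)}\le\lVert f''\rVert_\infty\,x^2(1-y)^2$, while $f((1-y)x)$ vanishes unless $(1-y)x\in[m,M]$, in which case $1-y\ge m/x$ and hence $\abs{f((1-y)x)}\le\lVert f\rVert_\infty\le\lVert f\rVert_\infty(x(1-y)/m)^2$. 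So $\abs{\Phi(x,y)}\le c(1-y)^2$ with $c$ bounded locally in $x$; since this bound is $K$-integrable, $I(x)$ is finite and dominated convergence makes $I$, hence $\mathcal{L}_\alpha f$, continuous on $(0,\infty)$. For $x<m$, $f$ and its derivatives vanish at $x$ and $f(yx)=f((1-y)x)=0$, so $\mathcal{L}_\alpha f\equiv 0$ on $(0,m)$. For $x>2M$ one has $f(x)=f'(x)=0$ and, since $y\ge1/2$ forces $yx>M$, also $f(yx)=0$ for every $y\in[1/2,1)$; thus $\mathcal{L}_\alpha f(x)=x^\alpha\int_{[1/2,1)}f((1-y)x)\,K(\d y)$, an integral concentrated on $\{y:1-M/x\le y<1\}$, where $1-y\ge m/x$. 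Using $1\le(x(1-y)/m)^q$ there, for any $q\in\dom\kappa$ we get
\[ \abs{\mathcal{L}_\alpha f(x)}\le x^\alpha\lVert f\rVert_\infty\int_{[1-M/x,\,1)}\Bigl(\frac{x(1-y)}{m}\Bigr)^{q}K(\d y)=\lVert f\rVert_\infty\,m^{-q}\,x^{q+\alpha}\int_{[1-M/x,\,1)}(1-y)^q\,K(\d y), \]
and the last integral tends to $0$ as $x\to\infty$ because $\int_{[1/2,1)}(1-y)^q\,K(\d y)<\infty$ by \ref{i:Lbounds:dom}; hence $\mathcal{L}_\alpha f(x)=o(x^{q+\alpha})$, and in particular for $q=2$.

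The arguments are all elementary; the one point requiring genuine care, and used throughout, is the uniform control of the integrands near $y=1$ when $K$ has infinite mass, which is precisely what the quadratic estimate $\abs{h_q(y)}\le C_q(1-y)^2$ and the hypothesis \eqref{Eqcondnu} deliver. The only mildly delicate observation is the one in \ref{i:Lbounds:asym}: for large $x$ the constraint $y\ge1/2$ pushes $yx$ out of $\operatorname{supp} f$, so the growth of $\mathcal{L}_\alpha f$ at infinity is governed entirely by the small-dislocation part of $K$, and this is what produces the exponent $q+\alpha$.
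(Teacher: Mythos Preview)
Your proof is correct and follows essentially the same approach as the paper: the Taylor estimate $y^q-1+q(1-y)=O((1-y)^2)$ handles (i)--(ii), and for (iii) you observe that for large $x$ only the term $f((1-y)x)$ survives and control it via the tail integral $\int_{[1-M/x,1)}(1-y)^q\,K(\d y)\to 0$. Your write-up is considerably more detailed than the paper's (which dismisses continuity and vanishing near $0$ as ``straightforward''), and your tail-integral argument for the $o(x^{q+\alpha})$ bound is arguably cleaner than the paper's intermediate claim $K([1/2,1-\varepsilon))=o(\varepsilon^{-q})$, but the ideas are the same.
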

\begin{proof}
  \begin{itemize}
    \item[(i--ii)]
      First, the integral $\int_{[1/2,1)}\left(y^q - 1 +q(1-y)\right) K(\d y)$ converges
      absolutely thanks to \eqref{Eqcondnu}, since $y^q - 1 +q(1-y)=O((1-y)^2)$. 
      It follows that $\kappa(q)$ is then well-defined with values in $(-\infty,\infty)$
      if and only if $\int_{[1/2,1)}(1-y)^qK(\d y)<\infty$, and otherwise $\kappa(q)=\infty$. 
    \item[(iii)]
      The first assertions are straightforward, and so we check only the last one.
      Take $q\in \dom \kappa$ and recall from above that $\int_{[1/2,1)}(1-y)^qK(\d y)<\infty$.
      This entails $K([1/2,1-\varepsilon))=o(\varepsilon^{-q})$ as $\varepsilon\to 0+$.
      Since $f$ has compact support in $(0,\infty)$, we have for $x$ sufficiently large that
      $\mathcal L_0f(x)=\int_{[1/2,1)}f(x(1-y))K(\d y)$, and we easily conclude that ${\mathcal L}_0f(x)=o(x^q)$. 
      \qedhere
  \end{itemize}
\end{proof}

Doumic and Escobedo \cite{DouEsc} studied certain growth-fragmentation equations
with homogeneous operators given by \eqref{Eqop0} for $\alpha=0$, and
observed that the Mellin transform plays an important role.
In this direction, it is useful to introduce the notation
$h_q\from (0,\infty)\to (0,\infty)$, $h_q(x)=x^q$, for the power function with exponent $q$, and 
recall that the Mellin transform of a measurable function $f\colon(0,\infty)\to \R$ is defined for $z\in\C$ by
$${\mathcal M}f(z)\coloneqq  \int_0^{\infty} f(x) x^{z-1}\d x $$
whenever the integral in the right-hand side converges. 
It follows from \autoref{Lbounds} that the Mellin transform of ${\mathcal L}_{\alpha}f$ 
is well defined for all $z < -2-\alpha$, or more generally for all $z$ such that $-z-\alpha \in (\dom \kappa)^\circ$.

The role of  $\kappa$ in this study stems from the following lemma,
which is easily checked from elementary properties of the Mellin transform;
see \cite[\S 1.1]{DouEsc} and \cite[\S 12.3]{Davies}.
\begin{lem}\mbox{} \label{L0}  
  \begin{enumerate}[(i)]
    \item\relax\label{i:L0:eigenfunction}
      Let $q \in \dom \kappa$. Then,
      $$ {\mathcal L}_{\alpha}h_q(x)= \kappa(q) h_{q+\alpha}(x), \qquad x > 0.$$
      In particular, for $\alpha=0$,  $h_q$ is an eigenfunction for ${\mathcal L}_{0}$ with eigenvalue $\kappa(q)$. 
    \item For every $q$ such that $q-\alpha \in (\dom \kappa)^\circ$
      and every $f\in{\mathcal C}^{\infty}_c(0,\infty)$, there is the identity
    $${\mathcal M}({\mathcal L}_{\alpha}f)(-q)= \kappa(q-\alpha) {\mathcal M}f(-q+\alpha).$$
  \end{enumerate}
\end{lem}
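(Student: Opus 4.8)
The plan is to verify both identities by direct computation from the definition of $\mathcal{L}_\alpha$ and the Mellin transform, with part~(i) being essentially a calculus exercise and part~(ii) an integration-by-parts argument that reduces to part~(i).

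For part~(i), I would plug $h_q(x) = x^q$ into the formula \eqref{Eqop1}. The differential part gives $x^\alpha\bigl(aq(q-1)x^q + bqx^q\bigr) = \bigl(aq^2 + (b-a)q\bigr)h_{q+\alpha}(x)$. For the integral part, note that $h_q(y\mid x) = (yx)^q + ((1-y)x)^q = x^q\bigl(y^q + (1-y)^q\bigr)$ and $xh_q'(x)(1-y) = qx^q(1-y)$, so the integrand becomes $x^q\bigl(y^q + (1-y)^q - 1 + q(1-y)\bigr)$, and pulling out $x^{q+\alpha}$ leaves exactly the integral appearing in \eqref{eqkappa}. The hypothesis $q \in \dom\kappa$ is exactly what \autoref{Lbounds}\pref{i:Lbounds:dom} tells us is needed for this integral to converge absolutely, so the manipulation is justified and we obtain $\mathcal{L}_\alpha h_q(x) = \kappa(q)h_{q+\alpha}(x)$. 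The special case $\alpha = 0$ is then immediate.

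For part~(ii), the idea is to compute $\mathcal{M}(\mathcal{L}_\alpha f)(-q)$ by writing out each term of $\mathcal{L}_\alpha f$ against $x^{-q-1}\,\dd x$ and integrating. The derivative terms are handled by integration by parts: for instance $\int_0^\infty x^\alpha \cdot bxf'(x) \cdot x^{-q-1}\,\dd x = b\int_0^\infty f'(x)x^{\alpha - q}\,\dd x = -b(\alpha - q)\int_0^\infty f(x)x^{\alpha-q-1}\,\dd x = b(q-\alpha)\mathcal{M}f(\alpha - q)$, the boundary terms vanishing because $f$ has compact support in $(0,\infty)$; similarly the second-order term contributes $a(q-\alpha)(q-\alpha+1)\mathcal{M}f(\alpha-q)$, and combining these gives the quadratic part $\bigl(a(q-\alpha)^2 + (b-a)(q-\alpha)\bigr)\mathcal{M}f(\alpha-q)$. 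For the integral term one uses Fubini to swap the $K(\dd y)$ and $\dd x$ integrations — justified by \autoref{Lbounds}\pref{i:Lbounds:asym}, which guarantees that $\mathcal{L}_\alpha f(x) = o(x^{q'+\alpha})$ for $q' \in \dom\kappa$ so that the Mellin transform converges absolutely for $-q-\alpha \in (\dom\kappa)^\circ$ — and for each fixed $y$ evaluates $\int_0^\infty \bigl(f(yx) + f((1-y)x) - f(x) + xf'(x)(1-y)\bigr)x^{\alpha-q-1}\,\dd x$ via the scaling $\mathcal{M}(f\circ\varphi_c)(z) = c^{-z}\mathcal{M}f(z)$ and one more integration by parts, producing $\bigl(y^{q-\alpha} + (1-y)^{q-\alpha} - 1 + (q-\alpha)(1-y)\bigr)\mathcal{M}f(\alpha-q)$. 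Summing over the $K$-integral reconstitutes $\kappa(q-\alpha)$, and altogether $\mathcal{M}(\mathcal{L}_\alpha f)(-q) = \kappa(q-\alpha)\mathcal{M}f(-q+\alpha)$.

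The only real subtlety is justifying the interchange of integration (Fubini) and the vanishing of boundary terms uniformly in $y$ near $y = 1$; the bounds from \autoref{Lbounds} are tailored precisely to cover this, so the main obstacle is bookkeeping rather than genuine difficulty. Indeed, since the statement asserts this "is easily checked from elementary properties of the Mellin transform", I would keep the write-up brief and cite \cite[\S 1.1]{DouEsc} and \cite[\S 12.3]{Davies} for the standard Mellin-transform facts used.
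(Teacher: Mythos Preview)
Your approach is correct and is exactly the elementary verification the paper gestures at (the paper gives no detailed proof, merely stating the lemma ``is easily checked from elementary properties of the Mellin transform'' and citing \cite[\S 1.1]{DouEsc} and \cite[\S 12.3]{Davies}, precisely as you propose to do). One small slip: the second-order term yields $a(q-\alpha)(q-\alpha-1)\mathcal{M}f(\alpha-q)$ rather than $a(q-\alpha)(q-\alpha+1)$, but your combined expression $a(q-\alpha)^2 + (b-a)(q-\alpha)$ is nonetheless correct.
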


\section{The homogeneous case}
\label{s:hom}

Throughout this section, we assume that $\alpha=0$, and refer to this case as \textit{homogeneous}.
Recall that when $a=0$ and the dislocation measure $K$ fulfills the stronger condition 
\begin{equation}\label{Eqcondnu1}
  \int_{[1/2,1)}(1-y)K(\d y)<\infty, 
\end{equation}
 then we can express the operator ${\mathcal L}_{0}$ in the simpler form
\begin{equation}\label{e:L-c-nu}
  {\mathcal L}_{c,K}f(x) \coloneqq cxf'(x) + \int_{[1/2,1)}\left(f(y\mid x)-f(x)\right) K(\d y)
\end{equation}
with $c=b+\int_{[1/2,1)}(1-y) K(\d y)$. 
This situation was considered in depth by \citet{DouEsc}, and most of the results of this section
should be viewed as extensions of those in \cite{DouEsc} to the case when either $a>0$, or $K$
fulfills \eqref{Eqcondnu} but not  \eqref{Eqcondnu1}. Furthermore, the case
$c \le 0$ was considered by \citet{Haas} using the same method that we employ below.

\subsection{Main results}
\label{s:zero-main}

The key observation in \autorefpref{L0}{i:L0:eigenfunction} that
power functions $h_q$ are eigenfunctions of the operator
${\mathcal L}_{0}$, underlies the analysis of the homogeneous case.
Specifically,  if we knew that $(\mu_t)_{t\geq 0}$ solves \eqref{EqFG} with $f=h_q$ for $q\geq 2$,
then the Mellin transform of $\mu_t$, 
${M}_t(z)=\langle \mu_t, h_{z-1}\rangle$,  would solve the linear equation
\begin{equation}\label{EqMelh}
  \partial {M}_t(q+1) = \kappa(q){M}_t(q+1).
\end{equation}
Focussing for simplicity on the initial condition $\mu_0=\delta_1$,  so that ${M}_0(q)=1$, we would find 
\begin{equation}\label{SolMelh}
  {M}_t(q+1)=\exp(t\kappa(q)).
\end{equation}

In order to check that \eqref{SolMelh} is indeed the Mellin transform of a positive measure, we define,
for every  $\omega \in \dom \kappa$,
a new function by shifting $\kappa$:
\[
  \Phi_{\omega}(q)\coloneqq \kappa(\omega+q)-\kappa(\omega),\qquad q\geq 0.
\]
This is a smooth, convex function, and it has a simple probabilistic interpretation,
which will play a major role throughout. In this direction, recall first that 
a \define{L\'evy process} is a stochastic process 
issued from the origin
with stationary and independent increments and 
c\`adl\`ag paths. It is further called \define{spectrally negative} if all its jumps are negative. 
If $\xi: = (\xi(t))_{t\geq 0}$ is a spectrally negative L\'evy process
with law $\lP$, then for all $t\geq 0$ and $\theta\in \R$,
the \emph{Laplace exponent} $\Phi$, given by 
\[ 
  \E\bigl[ \exp(q \xi(1)) \bigr] = \exp(\Phi(q)) ,
\]
is well defined (and finite) for all $q\geq 0$, and 
satisfies the classical L\'evy-Khintchin formula
% I changed \Pi -> \Upsilon because \Pi appears in another capacity in the next lemma.
\[
  \Phi(q) =  \mathtt{a}q + \frac{1}{2}\sigma^2q^2 + 
  \int_{(-\infty, 0)} ( \e^{qx} -1 + q x\Indic{|x|\leq 1})\Upsilon(\dd x) , \qquad q \ge 0,
\]
where $\mathtt{a}\in\mathbb{R}$, $\sigma\geq 0$, and $\Upsilon$ is a measure
(the \define{L\'evy measure}) on
$(-\infty, 0)$ such that 
$$\int_{(-\infty, 0)}(1\wedge x^2)\Upsilon(\dd x)<\infty.$$
% Conversely, any such function $\Phi$ whose parameters satisfy the constraints above
is in fact the Laplace exponent of a spectrally negative Lévy process;
see \cite[Theorem 8.1]{Sato}.

\begin{lem}\relax \label{L1}
  Let $\omega \in \dom\kappa$. Then:
  \begin{enumerate}[(i)]
    \item 
      the function $\Phi_\omega$ is the Laplace
      exponent of a spectrally negative Lévy process, which we will call\
      $\xi_\omega = (\xi_\omega(t))_{t\ge 0}$.
    \item
      for every $t\geq 0$, there exists a unique probability measure 
      $\rho^{[\omega]}_t$ on $(0,\infty)$ with Mellin transform given by 
      \begin{equation}\label{Eqrho}
        \mathcal{M} \rho^{[\omega]}_t (q+1)
        \coloneqq
        \int_{(0,\infty)} x^q \rho^{[\omega]}_t(\d x) = \exp(t\Phi_\omega(q))\,, \qquad q\geq 0.
      \end{equation}
      The family of measures has the representation $\rho^{[\omega]}_t = \P(\exp(\xi_\omega(t)) \in \cdot)$, for $t\ge 0$.
  \end{enumerate}
\end{lem}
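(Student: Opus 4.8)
The plan is to prove part~(i) by exhibiting $\Phi_\omega$ explicitly in Lévy–Khintchine form, namely as a sum of three functions each of which is separately recognizable as the Laplace exponent of a spectrally negative Lévy process, and then to deduce part~(ii) from the stationary independent increments of the resulting process $\xi_\omega$ together with a standard transform‑uniqueness argument. Since $1-y\in(0,1/2]$, the map $q\mapsto(1-y)^q$ is nonincreasing, so \autorefpref{Lbounds}{i:Lbounds:dom} shows that $\dom\kappa$ is an interval unbounded above; hence $\omega+q\in\dom\kappa$ for all $q\ge0$ and $\Phi_\omega(q)=\kappa(\omega+q)-\kappa(\omega)$ is real and finite for $q\ge0$. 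I would then record the algebraic identity
\[
  \Phi_\omega(q)=\bigl(\Psi(\omega+q)-\Psi(\omega)\bigr)-aq+\int_{[1/2,1)}(1-y)^\omega\bigl((1-y)^q-1\bigr)\,K(\dd y),
\]
where $\Psi(q):=aq^2+bq+\int_{[1/2,1)}\bigl(y^q-1+q(1-y)\bigr)\,K(\dd y)$; this is checked by expanding both sides, using that $\int(y^p-1+p(1-y))\,K$ converges absolutely for every $p\ge0$ because the integrand is $O((1-y)^2)$ by \eqref{Eqcondnu}, and that $\int(1-y)^p\,K<\infty$ exactly when $p\in\dom\kappa$, so that each grouping of integrals is legitimate. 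Since a superposition of independent spectrally negative Lévy processes is again of the same type and Laplace exponents add, it is enough to verify that each of the three summands on the right is itself such an exponent.

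For the three pieces I would argue as follows. (a) With $x=\log y\in[-\log2,0)$ and $\Lambda$ the image of $K$ under $y\mapsto\log y$, one has $\Psi(q)=aq^2+bq+\int(e^{qx}-1+q(1-e^x))\,\Lambda(\dd x)$; since $|\log y|\le 2(1-y)$ on $[1/2,1)$, condition \eqref{Eqcondnu} gives $\int x^2\,\Lambda(\dd x)<\infty$, and rewriting $q(1-e^x)=-qx+q(1-e^x+x)$ with $1-e^x+x=O(x^2)$ puts $\Psi$ into the spectrally negative Lévy–Khintchine form with Gaussian coefficient $\sqrt{2a}\ge0$, Lévy measure $\Lambda$ on $(-\infty,0)$ and a finite drift; thus $\Psi$ is such a Laplace exponent by \cite[Theorem 8.1]{Sato}. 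As $\Psi(\omega)<\infty$ for every $\omega\ge0$ (again because $y^\omega-1+\omega(1-y)=O((1-y)^2)$), the Esscher transform $q\mapsto\Psi(\omega+q)-\Psi(\omega)$ is once more the exponent of a spectrally negative Lévy process: the exponential tilting leaves all jumps in $(-\infty,0)$ and changes only the drift. (b) The term $-aq$ is the exponent of the deterministic motion $t\mapsto-at$, admissible because $a\ge0$. (c) With $x=\log(1-y)\in(-\infty,-\log2]$, the remaining term equals $\int(e^{qx}-1)\,\Upsilon^{\omega}(\dd x)$, where $\Upsilon^{\omega}$ is the image of $(1-y)^\omega K(\dd y)$ under $y\mapsto\log(1-y)$; because $\omega\in\dom\kappa$ this is a \emph{finite} measure carried by $(-\infty,-\log2]$, so the term is the exponent of a compound Poisson process having only negative jumps. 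Adding the three exponents produces the desired spectrally negative Lévy process $\xi_\omega$.

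For part~(ii), set $\rho^{[\omega]}_t:=\P(\exp(\xi_\omega(t))\in\cdot)$. Stationarity and independence of the increments of $\xi_\omega$ give $\E[\exp(q\xi_\omega(t))]=\exp(t\Phi_\omega(q))$ for all $t,q\ge0$, which is precisely \eqref{Eqrho}; this settles existence and the stated representation. For uniqueness, let $\nu$ be any probability measure on $(0,\infty)$ with $\int x^q\,\nu(\dd x)=\exp(t\Phi_\omega(q))$ for all $q\ge0$, put $m:=\exp(t\Phi_\omega(1))\in(0,\infty)$, and pass to the size‑biased laws $\nu^{\ast}(\dd x):=\tfrac xm\,\nu(\dd x)$ and $\rho^{\ast}(\dd x):=\tfrac xm\,\rho^{[\omega]}_t(\dd x)$. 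The images of $\nu^{\ast}$ and $\rho^{\ast}$ under $x\mapsto\log x$ have moment generating functions that agree, and are finite, on $[-1,\infty)$, hence on a neighbourhood of the origin; since a probability law on $\R$ is determined by its moment generating function on such a neighbourhood, $\nu^{\ast}=\rho^{\ast}$ and therefore $\nu=\rho^{[\omega]}_t$.

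The only genuinely delicate part of the argument is the convergence bookkeeping in steps~(a)--(c): the various integrals must be kept grouped as above so that they converge, and this is where the two hypotheses play complementary roles — condition \eqref{Eqcondnu} tames the small dislocations through the ``larger fragment'' function $\Psi$, while membership $\omega\in\dom\kappa$ is exactly what makes the ``smaller fragment'' contribution a finite, compound Poisson, term. Everything else, including the uniqueness in part~(ii), is routine.
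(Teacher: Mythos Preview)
Your proof is correct and close in spirit to the paper's, though organised somewhat differently. Both arguments rest on the same two observations: the changes of variable $y\mapsto\log y$ and $y\mapsto\log(1-y)$ exhibit the integral pieces of $\kappa$ in L\'evy--Khintchine form, with \eqref{Eqcondnu} controlling the ``large fragment'' integral and membership in $\dom\kappa$ making the ``small fragment'' integral a finite (compound Poisson) term; and an Esscher transform handles the shift by $\omega$. The paper first establishes the special case $\omega=2$ directly---writing $\Phi_2$ as a single L\'evy--Khintchine exponent with L\'evy measure the sum of the images of $y^2K(\dd y)$ and $(1-y)^2K(\dd y)$---and then reaches general $\omega\in\dom\kappa$ by one Esscher shift of parameter $\omega-2$. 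You instead work at general $\omega$ from the outset, decomposing $\Phi_\omega$ as an Esscher tilt of the auxiliary exponent $\Psi$ plus a drift $-aq$ plus a compound Poisson piece; this is slightly more modular and avoids singling out $\omega=2$, at the cost of carrying three summands. Your treatment of part~(ii), in particular the uniqueness argument via size-biasing to secure a moment generating function finite on a two-sided neighbourhood of the origin, is more careful than the paper's, which simply identifies $\rho_t^{[\omega]}$ as the law of $\exp(\xi_\omega(t))$ and leaves the determinacy implicit.
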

\begin{proof}
  We prove both parts simultaneously, and focus first on the case $\omega=2$, where we write $\Phi \coloneqq \Phi_2$.
  We can express $\Phi$ in the form
  \begin{equation}\label{EqLKPhi}
    \Phi(q)= a q^2 +b'q + \int_{[1/2,1)}(y^q -1+q(1-y))y^2K(\d y) + \int_{[1/2,1)}((1-y)^q -1)(1-y)^2K(\d y)
  \end{equation}
  with
  $$
    b'=3a+b+\int_{[1/2,1)}(1-y)(1-y^2)K(\d y).
  $$

  Let us denote by $\Lambda(\d x)$ the image of $y^2K(\d y)$ by the map $y\mapsto x=\ln y$,
  and $\Pi(\d x)$ the image of $(1-y)^2K(\d y)$ by the map $y\mapsto x=\ln (1-y)$.
  Then, thanks to \eqref{Eqcondnu},  $\Lambda$ is a measure on $[-\ln 2,0)$ with $\int x^2\Lambda(\d x)<\infty$,
  and $\Pi$ is a finite measure on $(-\infty,-\ln 2]$, and there are the identities
  $$ \int_{[1/2,1)}(y^q -1+q(1-y))y^2K(\d y)= \int_{[-\ln 2,0)}(\e^{qx}-1+q(1-\e^x))\Lambda(\d x)$$
  and 
  $$\int_{[1/2,1)}((1-y)^q -1)(1-y)^2K(\d y)=\int_{(-\infty, -\ln 2]}(\e^{qx}-1)\Pi(\d x).$$

  This shows that $\Phi$ is given by  a L\'evy-Khintchin formula, and therefore, $\Phi$ 
  can be viewed as the Laplace exponent of a spectrally negative L\'evy process $\xi=(\xi(t))_{t\geq 0}$ 
  (see Chapter VI in \cite{BeLP} for background), i.e.,
  $$\E\left( \exp(q\xi(t))\right) = \exp(t\Phi(q))\,,\qquad t,  q\geq 0.$$
  We conclude that \eqref{Eqrho} does indeed determine a probability measure 
  $\rho_t$ which arises as the distribution of $\exp(\xi(t))$.

  Finally, if $\omega \ne 2$, we observe that the function $\Phi_\omega$ can
  be written
  \linebreak % (!) prevent math being split (!)
  $\Phi_\omega(q) = \Phi(q+\omega-2)-\Phi(\omega-2)$, which implies that
  $\Phi_\omega$ is given by an Esscher transform of $\Phi$, and hence is also
  the Laplace exponent of a spectrally negative Lévy process;
  see \cite[Theorem 3.9]{Kypr2} or \cite[Theorem 33.1]{Sato}.
\end{proof}

\begin{rem}
  More generally, if $\zeta$ denotes a random time having an exponential distribution, 
  say with parameter $\mathtt{k}\geq 0$, which is further independent of $\xi$,
  then the process 
  $$
    \xi_{\dagger}(t)=\left\{ \begin{matrix} \xi(t) &\hbox{ if }& t<\zeta\\
    -\infty &\hbox{ if }& t\geq \zeta  \end{matrix}\right.
  $$
  is referred to as a killed L\'evy process. Note that if we set $\Phi_{\dagger}(q))= \mathtt{k} + \Phi(q)$, then
  \[ \E\bigl[ \exp(q \xi_{\dagger} (t)) \bigr] = \exp(t\Phi_{\dagger}(q)) ,\]
  with the convention that $\exp(q \xi_{\dagger} (t)) =0$ for $t\geq \zeta$. 
  So \autoref{L1} shows that whenever $\kappa(\omega)\leq 0$,  the function $q \mapsto \kappa(\omega+q)$
  can be viewed as the Laplace exponent of a spectrally negative L\'evy process killed at an
  independent exponential time with parameter $-\kappa(\omega)$. 
\end{rem}

Recall from \autorefpref{Lbounds}{i:Lbounds:dom} that $2 \in \dom\kappa$ always;
we will write $\rho_t$ for $\rho^{[2]}_t$. Since $\rho_t$
is guaranteed to exist, this collection of measures will play a particular
role in the case $\alpha=0$. We stress that in the cases $\alpha < 0$ and $\alpha >0$, 
we will need to choose different values of $\omega$,
and the notation $\rho_t$ will then refer to a different distribution.

We point out the following property of the probability measures $\rho^{[\omega]}_t$,
which essentially rephrases Kolmogorov's forward equation.  
%Recall that $b'$ has been defined in \eqref{Eqb'}.

\begin{cor}\relax\label{C0}
  The family of probability measures $(\rho^{[\omega]}_t)_{t\geq 0}$ defined in 
  \autoref{L1} depends continuously on the parameter $t$ for the topology of weak convergence.
  Further, for every $g\in {\mathcal C}^{\infty}_c(0,\infty)$, the function 
  $t\mapsto \langle \rho^{[\omega]}_t, g\rangle$ is differentiable with derivative 
  $\partial_t\langle \rho^{[\omega]}_t, g\rangle= \langle \rho^{[\omega]}_t, {\mathcal A}g\rangle$,
  where 
  $${\mathcal A}g(x) \coloneqq x^{-\omega} {\mathcal L}_0(h_\omega g)(x)-\kappa(\omega)g(x)\,, \qquad x>0.$$
\end{cor}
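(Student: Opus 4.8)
The plan is to derive the corollary directly from the probabilistic representation $\rho^{[\omega]}_t = \P(\exp(\xi_\omega(t)) \in \cdot)$ established in \autoref{L1}, reducing everything to the well-known semigroup properties of the L\'evy process $\xi_\omega$. Writing $Y_\omega(t) = \exp(\xi_\omega(t))$, so that $Y_\omega$ is a positive self-similar Markov process (indeed it is the exponential of a L\'evy process, hence a multiplicative L\'evy process), the family $(\rho^{[\omega]}_t)_{t\ge0}$ is simply the family of one-dimensional marginals of $Y_\omega$ started from $1$.

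First I would establish weak continuity in $t$. Since $\xi_\omega$ has c\`adl\`ag paths and is issued from the origin, $\xi_\omega(t) \to \xi_\omega(s)$ almost surely along any sequence $t\downarrow s$ and in probability as $t\to s$ from either side (using stationarity of increments and the fact that a L\'evy process has no fixed discontinuities, i.e.\ $\xi_\omega(t)\to 0$ in probability as $t\to 0$); applying the continuous map $x\mapsto \e^x$ and bounded convergence against any $g\in\mathcal{C}_b(0,\infty)$ gives $\langle \rho^{[\omega]}_t, g\rangle \to \langle \rho^{[\omega]}_s, g\rangle$, which is weak continuity. Alternatively, and perhaps more cleanly, one reads continuity off \eqref{Eqrho}: the Mellin transform $q\mapsto \exp(t\Phi_\omega(q))$ is jointly continuous in $(t,q)$, which pins down weak convergence of the probability measures $\rho^{[\omega]}_t$ on $(0,\infty)$ via a standard Mellin-transform continuity theorem.

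Next I would identify the generator action. The key computation is an algebraic identity relating $\mathcal{A}$ to the generator of $Y_\omega$. By \autorefpref{L0}{i:L0:eigenfunction}, for $q\in\dom\kappa$ we have $\mathcal{L}_0 h_q = \kappa(q)h_q$, and hence for the tilted operator
\begin{equation*}
  \mathcal{A}h_q(x) = x^{-\omega}\mathcal{L}_0(h_{\omega+q})(x) - \kappa(\omega)h_q(x) = \bigl(\kappa(\omega+q)-\kappa(\omega)\bigr)x^q = \Phi_\omega(q)\,h_q(x),
\end{equation*}
so that power functions are eigenfunctions of $\mathcal{A}$ with eigenvalue $\Phi_\omega(q)$ — exactly the eigenvalues governing $\E[\exp(q\xi_\omega(t))] = \exp(t\Phi_\omega(q))$. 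This shows $\mathcal{A}$ is (the restriction to $\mathcal{C}_c^\infty$ of) the infinitesimal generator of the Markov process $Y_\omega = \exp(\xi_\omega)$: indeed $\mathcal{A}g(x) = x g_0'(x)\cdot(\text{drift}) + \tfrac12 x^2 g_0''(x)\cdot(\text{diffusion}) + \int(\ldots)$ where $g_0 = g$, which one checks is precisely $c x g'(x) + \ldots$ matching the pushforward under $\exp$ of the L\'evy generator of $\xi_\omega$ acting on $g\circ\log$. Then Kolmogorov's forward equation for $Y_\omega$ gives, for $g\in\mathcal{C}_c^\infty(0,\infty)$,
\begin{equation*}
  \partial_t \langle \rho^{[\omega]}_t, g\rangle = \partial_t \E_1\bigl[g(Y_\omega(t))\bigr] = \E_1\bigl[\mathcal{A}g(Y_\omega(t))\bigr] = \langle \rho^{[\omega]}_t, \mathcal{A}g\rangle,
\end{equation*}
provided one checks that $\mathcal{A}g$ is bounded and continuous so the forward equation applies in this form. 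The main obstacle is precisely this last point: $g\in\mathcal{C}_c^\infty(0,\infty)$, but $\mathcal{A}g$ involves $x^{-\omega}\mathcal{L}_0(h_\omega g)(x)$, and one must verify that $\mathcal{A}g$ is a bounded continuous function on $(0,\infty)$ — in particular controlling its behaviour near $0$ and $\infty$. Near $\infty$ this follows from \autorefpref{Lbounds}{i:Lbounds:asym} (with $\alpha=0$), since $h_\omega g$ still has compact support, so $\mathcal{L}_0(h_\omega g)(x) = o(x^{\omega'})$ for any $\omega'\in\dom\kappa$, giving decay of $x^{-\omega}\mathcal{L}_0(h_\omega g)$; near $0$ one uses that $h_\omega g$ vanishes in a neighbourhood of $0$ together with the moment bound \eqref{Eqcondnu} on $K$ to see $\mathcal{L}_0(h_\omega g)(x)$ vanishes near $0$ as well, and boundedness on compacts is clear from smoothness. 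Once $\mathcal{A}g\in\mathcal{C}_b(0,\infty)$ is in hand, the forward equation — equivalently, Dynkin's formula $\E_1[g(Y_\omega(t))] = g(1) + \int_0^t \E_1[\mathcal{A}g(Y_\omega(s))]\,\dd s$, which one justifies by a martingale/truncation argument or by appealing to the standard theory of positive self-similar Markov processes via the Lamperti transform — yields the differentiability and the stated derivative formula, completing the proof.
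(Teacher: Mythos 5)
Your proposal is correct and follows essentially the same route as the paper: both rely on the probabilistic representation $\rho^{[\omega]}_t = \P(\exp(\xi_\omega(t))\in\cdot)$, deduce weak continuity from the Feller/semigroup structure of the L\'evy process, compute the eigenvalue relation $\mathcal{A}h_q = \Phi_\omega(q)h_q$ via \autorefpref{L0}{i:L0:eigenfunction}, and identify $\mathcal{A}$ on $\mathcal{C}_c^\infty(0,\infty)$ with the generator of $\exp(\xi_\omega)$ to conclude by the forward equation. The only notable difference is that you are somewhat more explicit about checking that $\mathcal{A}g$ is bounded and continuous and about why Dynkin's formula applies, whereas the paper compresses this into ``standard arguments, using linear combinations of $h_q$.''
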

\begin{proof}
  Recall that every L\'evy processes fulfills the Feller property and in particular, for every function 
  $\varphi\in{\mathcal C}_0(\R)$, the map $t\mapsto \E(\varphi(\xi_\omega(t)))$ is continuous. 
  Taking
  \linebreak % (!) prevent math being split (!)
  $\varphi(x)=g(\e^x)$ with $g\in{\mathcal C}_0(0,\infty)$ yields 
  the weak continuity of the map $t\mapsto \rho^{[\omega]}_t$.

  Further, it is well-known that the domain of the infinitesimal generator of a L\'evy process contains
  ${\mathcal C}^{\infty}_c(\R)$ (see, e.g., Theorem 31.5 in \citet{Sato}), and it follows similarly
  that for $g\in {\mathcal C}^{\infty}_c(0,\infty)$, the map
  $t\mapsto \langle \rho^{[\omega]}_t, g\rangle$ is differentiable. 
  To compute the derivative, that is to find the infinitesimal generator, take $q\geq 0$ and recall that
  $h_q(x)=x^q$ for $x>0$. Then simply observe from \eqref{Eqrho} that
  $$\partial_t\langle \rho_t, h_q\rangle
    = \Phi_\omega(q) \exp(t\Phi_\omega(q))
    = \langle \rho^{[\omega]}_t, \Phi_\omega(q)h_q\rangle
    =\langle \rho^{[\omega]}_t, \kappa(q+\omega)h_q-\kappa(\omega)h_q\rangle.$$
  Using \autorefpref{L0}{i:L0:eigenfunction}, we can express 
  $$\kappa(q+\omega)h_q=h_{-\omega}\kappa(q+\omega)h_{q+\omega}=h_{-\omega}{\mathcal L}_0(h_\omega h_q),$$
  which shows that 
  $\partial_t\langle \rho^{[\omega]}_t, h_q\rangle= \langle \rho^{[\omega]}_t, {\mathcal A}h_q\rangle$ 
  for all $q\geq 0$.
  That the same holds when $h_q$ is replaced by a function $g\in{\mathcal C}^{\infty}_c$ now follows from standard arguments, using linear combinations of $h_q$. 
\end{proof}

We would now like to invoke \autoref{L1} to invert the Mellin transform \eqref{SolMelh}, observing that
(using \eqref{Eqrho} with $\rho = \rho^{[2]}$)
$$\exp(t\kappa(q))= \exp(t\kappa(2))\langle \rho_t, x^{q-2}\rangle,$$
 and conclude that 
$$\mu_t(\d x) = \e^{t\kappa(2)} x^{-2}\rho_t(\d x).$$ 
However $h_q\not \in {\mathcal C}^{\infty}_c(0,\infty)$ and we cannot directly apply this simple argument. 
Nonetheless we claim the following. 

\begin{thm}\relax\label{T1}
  The equation \eqref{EqFG}, for $f\in{\mathcal C}^{\infty}_c(0,\infty)$
  and with ${\mathcal L}={\mathcal L}_{0}$, has a unique solution started from 
  $\mu_0 = \delta_1$, given by
  \[ \mu_t(\d x) = \e^{t\kappa(2)} x^{-2}\rho_t(\d x), \qquad t \ge 0, \]
  where $\rho_t$ is the probability measure on $(0,\infty)$ defined by \eqref{Eqrho} for $\omega=2$. 
\end{thm}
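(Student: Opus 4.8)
The plan is to verify existence by direct computation and establish uniqueness by a Mellin-transform argument coupled with the convexity of $\kappa$. For existence, set $\mu_t(\dd x) = \e^{t\kappa(2)} x^{-2}\rho_t(\dd x)$ and fix $f\in\Ctest(0,\infty)$. The idea is to reduce the claim $\partial_t\ip{\mu_t}{f} = \ip{\mu_t}{\mathcal L_0 f}$ to \autoref{C0} with $\omega = 2$. Writing $g(x) = x^{2} f(x)$, one has $\ip{\mu_t}{f} = \e^{t\kappa(2)}\ip{\rho_t}{h_{-2} f} = \e^{t\kappa(2)}\ip{\rho_t}{g}$, except that $g\notin\Ctest(0,\infty)$ because $h_{-2}$ does not vanish near $0$; however, since $f$ has compact support in $(0,\infty)$, the product $g = h_{-2} f$ does lie in $\Ctest(0,\infty)$, so \autoref{C0} applies directly. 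Differentiating and using that corollary,
\begin{equation*}
  \partial_t \ip{\mu_t}{f}
  = \e^{t\kappa(2)}\bigl( \kappa(2)\ip{\rho_t}{g} + \ip{\rho_t}{\mathcal A g} \bigr)
  = \e^{t\kappa(2)}\ip{\rho_t}{ h_{-2}\mathcal L_0(h_2 g) }
  = \e^{t\kappa(2)}\ip{\rho_t}{ h_{-2}\mathcal L_0 f },
\end{equation*}
since $h_2 g = f$. Rewriting $\e^{t\kappa(2)}\ip{\rho_t}{h_{-2}\mathcal L_0 f} = \ip{\mu_t}{\mathcal L_0 f}$ gives the weak equation in differential form; integrating in $t$ and checking $\mu_0 = \delta_1$ (immediate from $\rho_0 = \delta_1$ and $\kappa(2)\cdot 0 = 0$) completes existence. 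One also records that $\mu_t$ is locally finite and that $s\mapsto\ip{\mu_s}{\mathcal L_0 f}$ is locally integrable, which follows from the weak continuity in \autoref{C0} together with the fact (\autorefpref{Lbounds}{i:Lbounds:asym}) that $\mathcal L_0 f$ is continuous, vanishes near $0$, and is $o(x^{q})$ at infinity for $q\in\dom\kappa$.

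For uniqueness, suppose $(\mu_t)_{t\ge 0}$ is any solution started from $\delta_1$. The strategy is to show its Mellin transform is forced to equal $\exp(t\kappa(\cdot))$ and then invoke the injectivity supplied by \autoref{L1}. The obstacle is that $h_q\notin\Ctest(0,\infty)$, so one cannot insert $f = h_q$ into the weak equation directly; instead one approximates. Fix $q\ge 2$ (so $q\in\dom\kappa$) and choose a family $f_n\in\Ctest(0,\infty)$ with $f_n = h_q$ on $[1/n,n]$, $0\le f_n\le h_q$, and control on $f_n'$, $f_n''$ so that $\mathcal L_0 f_n \to \mathcal L_0 h_q = \kappa(q) h_q$ pointwise with a uniform domination of the form $\abs{\mathcal L_0 f_n(x)} \le C(h_q(x) + h_{q'}(x))$ for some $q' < q$ with $q'\in\dom\kappa$; the $o(x^{q})$ estimate from \autoref{Lbounds} is what makes such a bound available near infinity, and near $0$ the functions are eventually killed. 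To turn pointwise control into control of $\ip{\mu_s}{\mathcal L_0 f_n}$ one first needs an a priori moment bound: testing the weak equation against suitable truncations of $h_2$ and using Grönwall gives $\sup_{s\le t}\ip{\mu_s}{h_q} < \infty$ for every $q\in\dom\kappa$ — this is really the technical heart and the place I expect the most work, since it requires simultaneously bounding all moments locally uniformly in time from the single relation $\partial_t\ip{\mu_t}{h_2}\le \kappa(2)\ip{\mu_t}{h_2}$ obtained in the limit. Granting the moment bound, dominated convergence yields, for $M_t(q) \coloneqq \ip{\mu_t}{h_{q}}$,
\begin{equation*}
  M_t(q) = M_0(q) + \int_0^t \kappa(q) M_s(q)\,\dd s = 1 + \kappa(q)\int_0^t M_s(q)\,\dd s,
\end{equation*}
whence $M_t(q) = \e^{t\kappa(q)}$ for all $q\in\dom\kappa$, and in particular for all $q\ge 2$.

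It remains to deduce $\mu_t = \e^{t\kappa(2)}h_{-2}\cdot\rho_t$ from the identity $\ip{\mu_t}{h_q} = \e^{t\kappa(q)}$ for $q\ge 2$. Define the measure $\nu_t(\dd x) \coloneqq \e^{-t\kappa(2)} x^{2}\mu_t(\dd x)$; the moment identity gives $\ip{\nu_t}{h_{p}} = \e^{t(\kappa(p+2)-\kappa(2))} = \e^{t\Phi_2(p)} = \mathcal M\rho_t(p+1)$ for all $p\ge 0$, so $\nu_t$ and $\rho_t$ have the same Mellin transform on $\{\Re z \ge 1\}$, which is a strip of positivity; by the standard uniqueness theorem for Mellin transforms (equivalently, uniqueness of Laplace transforms on $(0,\infty)$ after the substitution $x = \e^{u}$) we conclude $\nu_t = \rho_t$, hence $\mu_t(\dd x) = \e^{t\kappa(2)} x^{-2}\rho_t(\dd x)$ as claimed. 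This also confirms $\mu_t$ is locally finite, closing the argument.
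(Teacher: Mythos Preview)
Your existence argument is essentially the paper's, presented through \autoref{C0} rather than via power functions directly; these are equivalent. (There is a slip: you first write $g(x)=x^{2}f(x)$ and then $g=h_{-2}f$; the second is what you mean and what you use, since $\ip{\mu_t}{f}=\e^{t\kappa(2)}\ip{\rho_t}{h_{-2}f}$.)

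For uniqueness, you take a genuinely different route from the paper, and it has a real gap at exactly the point you flag. The paper never attempts to establish moment bounds for an arbitrary solution. Instead, given any solution $(\mu_t)$, it sets $\tilde\rho_t(\dd x)=\e^{-t\kappa(2)}x^{2}\mu_t(\dd x)$ and observes that for every $g\in\Ctest$ the family $(\tilde\rho_t)$ satisfies the Kolmogorov forward equation $\partial_t\ip{\tilde\rho_t}{g}=\ip{\tilde\rho_t}{\mathcal A g}$ for the Feller generator $\mathcal A$ of $\exp(\xi_2(\cdot))$. Uniqueness then follows from \cite[Proposition~4.9.18]{EK-mp}, using only that $(\lambda-\mathcal A)\Ctest$ is separating (which holds because $\Ctest$ is a core for $\mathcal A$). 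No integrability of $x^{q}$ against $\mu_t$ is ever invoked.

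Your argument, by contrast, hinges on the a~priori estimate $\sup_{s\le t}\ip{\mu_s}{h_q}<\infty$ for an \emph{arbitrary} locally finite solution, and the sketch ``truncate $h_2$, pass to the limit, apply Gr\"onwall'' does not close as written. When you cut $h_2$ off near infinity, the nonlocal part of $\mathcal L_0$ produces a nonnegative contribution $\int f_n((1-y)x)\,K(\dd y)$ at points $x$ beyond the support of $f_n$, so $\mathcal L_0 f_n$ is controlled by $Ch_2$ there, not by $Cf_n$; the resulting inequality $\ip{\mu_t}{f_n}\le 1+C\int_0^t\ip{\mu_s}{h_2}\,\dd s$ already contains the unknown quantity on the right. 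One can sometimes rescue such arguments by a careful analysis of the cutoff and a $[0,\infty]$-valued Gr\"onwall, but this requires controlling the derivative terms of the cutoff uniformly in $n$ and handling the behaviour near $0$ (where $\mu_t$ is only locally finite on $(0,\infty)$ and $h_q$ need not be integrable). None of this is carried out, and it is precisely the difficulty the paper's semigroup argument is designed to avoid.
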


\begin{rem}\relax \label{r:LE}
  In particular, the unique solution in \autoref{T1} fulfills
  \linebreak % (!) prevent math being split (!)
  $\langle \mu_t,h_q\rangle = \exp(t\kappa(q))$, as we expected from \eqref{SolMelh}.
  From a probabilistic perspective, this does not come as a surprise. In \cite{BeCF},
  a homogeneous growth-fragmentation stochastic process
  $\Zb(t)=(Z_1(t), Z_2(t), \dotsc)$ was constructed whose evolution
  is, informally speaking, governed by the stochastic growth-fragmentation dynamics
  described in the introduction. Using a spine technique, it may be shown
  (we omit the proof)
  that the solution $(\mu_t)_{t\ge 0}$ has the representation $\ip{\mu_t}{f} = \E \bigl[\sum_{i=1}^\infty f(Z_i(t)) \bigr]$,
  for any $f$ for which the right-hand side is finite; and
  in \cite[Theorem 1]{BeCF}, it is proved that
  $\E\bigl[\sum_{i=1}^{\infty} Z^q_i(t)\bigr]= \exp(t\kappa(q))$ for all $q\geq 2$. 
  We offer a more detailed discussion of the spine technique in \autoref{s:bps}.
\end{rem}

\begin{proof}[Proof of \autoref*{T1}]
  It is straightforward to check that
  $\mu_t(\d x) = \e^{t\kappa(2)} x^{-2}\rho_t(\d x)$ is indeed a solution.
  Specifically, we deduce from  \eqref{Eqrho}, that
  $$\langle \mu_t, h_q\rangle = \exp(t\kappa(2)) \exp(t\Phi(q-2)) = \exp(t\kappa(q)).$$
  We thus see that $(\mu_t)_{t\geq 0}$ solves \eqref{EqFG} with ${\mathcal L}= {\mathcal L}_{0}$
  and $f=h_q$ for every $q\geq 0$,
  and it follows from classical properties of the Mellin transform that this entails that 
  $(\mu_t)_{t\geq 0}$ solves \eqref{EqFG} more generally for all $f\in{\mathcal C}^{\infty}_c(0,\infty)$. 

  Conversely, given a solution $(\mu_t)_{t\geq 0}$ to \eqref{EqFG} with $\mu_0=\delta_1$,
  set 
  \[ \tilde \rho_t(\d x) = \e^{-t\kappa(2)} x^{2}\mu_t(\d x) . \]
  Take $g\in{\mathcal C}^{\infty}_c(0,\infty)$ and define $f(x)=x^2g(x)$ for $x>0$,
  so $f\in{\mathcal C}^{\infty}_c(0,\infty)$. Then we have 
  $\langle \tilde \rho_t, g\rangle = \e^{-t\kappa(2)}\langle \mu_t, f\rangle$ and
  $$
    \partial_t \langle \tilde \rho_t, g\rangle 
    = -\kappa(2)\langle \tilde \rho_t, g\rangle + \e^{-t\kappa(2)}\langle \mu_t, {\mathcal L}_{0}f\rangle,
  $$
  that is,
  \begin{equation} \label{Eqgenexp}
    \partial_t \langle \tilde \rho_t, g\rangle = \langle \tilde \rho_t, {\mathcal A}g\rangle,
  \end{equation}
  with 
  $${\mathcal A}g(x)= x^{-2} {\mathcal L}_{0}f (x)- \kappa(2)g(x),$$
  as in the notation of \autoref{C0}. We can thus
  interpret \eqref{Eqgenexp} as Kolmogorov's forward equation for the infinitesimal generator
  of the Feller process $(\exp(\xi(t)))_{t\geq 0}$.
  This will in turn enable us to identify 
  $\tilde \rho_t= \rho_t$.

  To be precise,  examining the proof of \cite[Proposition 4.9.18]{EK-mp},
  we see that \eqref{Eqgenexp} for 
  all $g\in {\mathcal C}^{\infty}_c(0,\infty)$ has at most one solution (in the sense of a 
  vaguely right-continuous collection
  of measures $(\tilde \rho_t)_{t\ge 0}$)
% such that for all $g \in {\mathcal C}^{\infty}_c(0,\infty)$,
% $t\mapsto \langle \tilde \rho_t,g\rangle$ is right-continuous and fulfills  \eqref{Eqgenexp}
  so long as the image of ${\mathcal C}^{\infty}_c(0,\infty)$ by
  $\lambda-{\mathcal A}$ is separating (see \cite[p.~112]{EK-mp}) for each $\lambda >0$.
  Since ${\mathcal A}$ is the generator of a Feller process and
  ${\mathcal C}^{\infty}_c(0,\infty)$ is a core (cf. Theorem 31.5 in Sato \cite{Sato}),
  we know that the image of ${\mathcal C}^{\infty}_c(0,\infty)$ by
  $\lambda-{\mathcal A}$ is a dense subset of $\mathcal C_0$,
  and this implies that it is separating.
  If $(\tilde \rho_t)_{t \ge 0}$ is a collection of measures solving \eqref{Eqgenexp},
  then for any $g \in \mathcal C_c^\infty$, the function $t \mapsto \langle \tilde \rho_t,g\rangle$ is right-continuous.
  Hence, the solution of
  \eqref{Eqgenexp} restricted to $\mathcal C_c^\infty$ is unique, and this transfers to
  \eqref{EqFG}.
\end{proof}

\subsection{Some properties of solutions}
\label{s:zero-properties}

We next present some properties of the solution identified in \autoref{T1},
by means of translating known results on Lévy processes.

We first point out that, depending on whether  \eqref{Eqcondnu1} holds and $a=0$, 
the support of the solution $\mu_t$ is bounded or not. 
Specifically, if $a=0$ and \eqref{Eqcondnu1} holds, 
we set 
$$d \coloneqq b + \int_{[1/2,1)}(1-y) K(\d y),$$
and otherwise $d=\infty$. It is easy to verify that $d=\lim_{q\to \infty} q^{-1}\kappa(q)$.

\begin{cor}\label{C1} 
  If $a=0$ and \eqref{Eqcondnu1} holds,
  then for every $t>0$, $\e^{dt}$ is the supremum of the support of $\mu_t$,
  i.e., we have for every $\varepsilon >0$,
  $$\mu_t((\e^{t d}, \infty))=0 \ \hbox{and}\  \mu_t((\e^{t d}-\varepsilon,\e^{t d}])>0.$$ 
\end{cor}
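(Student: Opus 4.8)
The plan is to read off the right endpoint of $\operatorname{supp}\mu_t$ from the moments $\langle\mu_t,h_q\rangle=\e^{t\kappa(q)}$, which were identified in the proof of \autoref{T1} (see also \autoref{r:LE}). Recall from \autoref{T1} that $\mu_t(\d x)=\e^{t\kappa(2)}x^{-2}\rho_t(\d x)$ with $\rho_t$ the law of $\exp(\xi(t))$, where $\xi=\xi_2$ is the spectrally negative L\'evy process of \autoref{L1}; since the density $\e^{t\kappa(2)}x^{-2}$ is continuous and strictly positive on $(0,\infty)$, $\mu_t$ and $\rho_t$ have the same support, and that support is the image under $\exp$ of the support of the distribution of $\xi(t)$. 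Under the standing hypotheses $a=0$ and \eqref{Eqcondnu1}, the L\'evy--Khintchine decomposition of $\Phi=\Phi_2$ written down in the proof of \autoref{L1} has no Gaussian component, and its L\'evy measure integrates $\abs{x}\wedge1$ (the part near $0$ is controlled by $\int_{[1/2,1)}(1-y)K(\d y)<\infty$), so $\xi$ has paths of bounded variation; writing $\xi(t)=\mathtt{d}t-S(t)$ with $S$ a driftless subordinator, one identifies the drift as $\mathtt{d}=\lim_{q\to\infty}\Phi(q)/q=\lim_{q\to\infty}\kappa(q)/q=d$.

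From here the upper bound $\mu_t((\e^{td},\infty))=0$ is immediate: since $S(t)\ge0$ we have $\xi(t)\le dt$ almost surely, hence $\rho_t((\e^{td},\infty))=0$. (Alternatively, avoiding L\'evy processes altogether: for $q\ge1$ the $K$-integral in \eqref{eqkappa} splits into absolutely convergent pieces, using $1-y^q\le q(1-y)$ and $(1-y)^q\le 1-y$ on $[1/2,1)$, so that $\kappa(q)=dq+\int_{[1/2,1)}\bigl(y^q+(1-y)^q-1\bigr)K(\d y)\le dq$, because $s^q\le s$ for $s\in[0,1]$; then Markov's inequality gives, for any $M>\e^{td}$ and $q\ge1$, $\mu_t((M,\infty))\le M^{-q}\langle\mu_t,h_q\rangle\le(\e^{td}/M)^q\to0$ as $q\to\infty$.)

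For the matching lower bound I would use the standard fact that a driftless subordinator has essential infimum $0$: from $\E[\e^{-\lambda S(t)}]=\e^{-t\phi(\lambda)}$ with $\phi(\lambda)/\lambda\to0$ one gets $\P(S(t)<\delta)>0$ for every $\delta>0$, so $\P(\xi(t)>dt-\delta)>0$, hence $\rho_t((\e^{dt-\delta},\infty))>0$; combining with the upper bound, $\mu_t((\e^{dt-\delta},\e^{td}])>0$, and since $\e^{td}(1-\e^{-\delta})\to0$ as $\delta\downarrow0$ this gives $\mu_t((\e^{td}-\varepsilon,\e^{td}])>0$ for every $\varepsilon>0$. (A purely moment-based variant also works: if $\operatorname{supp}\mu_t\subseteq(0,\e^{td}-\varepsilon]$ for some $\varepsilon\in(0,\e^{td})$, then for $q\ge2$ we would have $\e^{t\kappa(q)}=\langle\mu_t,h_q\rangle\le(\e^{td}-\varepsilon)^{q-2}\e^{t\kappa(2)}$, and dividing the logarithm by $q$ and letting $q\to\infty$ forces $d=\lim_{q\to\infty}\kappa(q)/q\le t^{-1}\log(\e^{td}-\varepsilon)<d$, a contradiction.)

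The argument is short and I do not expect a genuine obstacle. The two points that need a little care are the verification that $\xi$ has bounded variation with drift exactly $d$ under \eqref{Eqcondnu1} — equivalently, the elementary convexity bound $\kappa(q)\le dq$ for $q\ge1$ together with $\lim_{q\to\infty}\kappa(q)/q=d$, both of which reduce to bookkeeping with \eqref{Eqcondnu1} — and the standard but slightly delicate fact that the one-dimensional marginals of a driftless subordinator have essential infimum $0$. Everything else is the Markov inequality.
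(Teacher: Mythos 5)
Your proof is correct and takes essentially the same route as the paper: identify $\xi=\xi_2$ as a bounded-variation spectrally negative Lévy process with drift $d$, observe that $td$ is then the supremum of the support of $\xi(t)$, and transfer to $\mu_t$ via the relation $\mu_t(\d x)=\e^{t\kappa(2)}x^{-2}\rho_t(\d x)$ from \autoref{T1}. You merely unpack the paper's appeal to the ``well-known'' support fact (writing $\xi(t)=dt-S(t)$ with $S$ a driftless subordinator, using $S(t)\ge0$ for the upper bound and the essential infimum zero of a driftless subordinator for the lower bound), and your moment-based variants are a clean alternative but not needed.
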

\begin{proof}
  The spectrally 
  negative L\'evy process $\xi = \xi_2$, arising in \autoref{L1},
  has bounded variation with drift coefficient $d$ exactly when the conditions of the result hold,
  and it is then well-known that 
  $td$ is the maximum of the support of the distribution of $\xi(t)$.
  Therefore we have $\rho_t((\e^{t d}, \infty))=0$ and $\rho_t((\e^{t d}-\varepsilon,\e^{t d}])>0$, 
  and our claim follows from \autoref{T1}.
\end{proof}

In the case when the assumptions of \autoref{C1} are not fulfilled, we have the following 
large deviations estimates for the tail 
$ \bar \mu_t(x) \coloneqq \mu_t((x,\infty))$ of $\mu_t$.
Recall that $\kappa$ is a convex function,  and observe that
$\lim_{q \to +\infty} \kappa'(q)=+\infty$ when either $a>0$ or \eqref{Eqcondnu1} fails.
Thus for every $r$ sufficiently large, the equation $\kappa'(q)=r$ has a unique solution
which we denote by $\theta(r)$, and the Legendre-Fenchel transform of $\kappa$ is given by
$$\kappa^*(r)\coloneqq\sup_{q>0}\{rq-\kappa(q)\}=r\theta(r)-\kappa(\theta(r)).$$

\begin{cor}\label{C2} 
  Suppose that $a>0$ or \eqref{Eqcondnu1} fails. 
  Then for every $r>0$ sufficiently large, we have
  $$\lim_{t\to \infty} t^{-1}\ln \bar \mu_t(\e^{tr}) = -\kappa^*(r).$$
\end{cor}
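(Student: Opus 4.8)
The plan is to transfer the large deviations asymptotic from the underlying spectrally negative Lévy process to the tail of $\mu_t$ via the identity from \autoref{T1}. Recall that $\mu_t(\d x) = \e^{t\kappa(2)} x^{-2}\rho_t(\d x)$ and $\rho_t = \lP(\exp(\xi(t)) \in \cdot)$ where $\xi = \xi_2$ has Laplace exponent $\Phi(q) = \Phi_2(q) = \kappa(q+2)-\kappa(2)$. Writing $x = \e^{tr}$, we have
\[
  \bar\mu_t(\e^{tr}) = \e^{t\kappa(2)} \int_{(\e^{tr},\infty)} x^{-2}\,\rho_t(\d x)
  = \e^{t\kappa(2)} \, \E\bigl[ \e^{-2\xi(t)} \Indic{\xi(t) > tr} \bigr].
\]
So the problem reduces to finding the exponential rate of decay of $\E[\e^{-2\xi(t)}\Indic{\xi(t)>tr}]$ as $t\to\infty$. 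The condition that $a>0$ or \eqref{Eqcondnu1} fails is exactly what guarantees $\lim_{q\to\infty}\kappa'(q) = +\infty$ (equivalently $\lim_{q\to\infty}\Phi'(q)=+\infty$), so that $\xi$ is not of bounded variation with a finite maximal drift, and the Legendre transform $\kappa^*$ has the stated form on a half-line $r > r_0$ for some $r_0$.

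First I would apply an exponential (Esscher) change of measure: for $q \ge 0$, define $\lP^{(q)}$ by $\frac{\d\lP^{(q)}}{\d\lP}\big|_{\F_t} = \e^{q\xi(t) - t\Phi(q)}$, under which $\xi$ is again a spectrally negative Lévy process with Laplace exponent $\Phi_q(\cdot) = \Phi(q+\cdot)-\Phi(q)$ and drift $\E^{(q)}[\xi(1)] = \Phi'(q)$. Then
\[
  \E\bigl[ \e^{-2\xi(t)}\Indic{\xi(t)>tr}\bigr]
  = \e^{t\Phi(q)} \, \E^{(q)}\bigl[ \e^{-(q+2)\xi(t)}\Indic{\xi(t)>tr}\bigr].
\]
Choosing $q = q(r)$ so that the tilted mean matches the level $r$, i.e. $\Phi'(q+2) = r$ (equivalently $\kappa'(\theta(r)) = r$ with $\theta(r) = q+2$), and then bounding $\e^{-(q+2)\xi(t)} \le \e^{-(q+2)tr}$ on the event $\{\xi(t)>tr\}$ for the upper bound while using a weak law of large numbers together with a local-type lower bound (restricting to $\{tr < \xi(t) \le tr + o(t)\}$, which has probability bounded below by a constant under $\lP^{(q)}$ by the LLN and the absence of positive jumps) for the lower bound, one obtains
\[
  \lim_{t\to\infty} t^{-1}\ln \E\bigl[\e^{-2\xi(t)}\Indic{\xi(t)>tr}\bigr]
  = \Phi(\theta(r)-2) - (\theta(r)) r = \kappa(\theta(r)) - \kappa(2) - r\theta(r).
\]
Adding back the $\kappa(2)$ from the prefactor gives $\lim_t t^{-1}\ln\bar\mu_t(\e^{tr}) = \kappa(\theta(r)) - r\theta(r) = -\kappa^*(r)$, as claimed. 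Alternatively, and more cleanly, one may simply cite the standard Cramér-type large deviations estimate for spectrally negative Lévy processes (see e.g.\ the results underlying \cite{BeLP}): $t^{-1}\ln \lP(\xi(t) > tr) \to -\Phi^*(r)$ for $r$ large, and handle the extra factor $\e^{-2\xi(t)}$ by noting that on the relevant event $\xi(t) \approx tr$, so this factor contributes $\e^{-2tr}$ to the exponential rate, modifying $\Phi^*$ into the shifted Legendre transform that matches $\kappa^*$; the spectral negativity is what makes such ``pinning'' arguments work without upper large-deviation difficulties.

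The main obstacle is the matching lower bound: the upper bound is a one-line Markov/Chernoff estimate, but to get the correct constant in the lower bound I must show that the probability (under the suitably tilted measure) that $\xi(t)$ lies in a window $(tr, tr + \delta t]$ does not decay exponentially. This is where spectral negativity is essential — since $\xi$ has no positive jumps, once the tilted process has mean drift exactly $r$, a standard argument (e.g.\ via the strong law of large numbers for $\xi(t)/t$ under $\lP^{(q)}$, or via fluctuation identities for spectrally negative Lévy processes) shows $\lP^{(q)}(\xi(t) \in (tr, tr+\delta t]) \to$ a positive limit, or at least stays bounded below by a positive constant along a subsequence, which suffices after letting $\delta \downarrow 0$. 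I would also need to dispose of a minor measurability/regularity point, namely that $r_0 := \lim_{q\to 0+}(\text{appropriately shifted})$ is finite so that the equation $\kappa'(\theta)=r$ indeed has a solution $\theta(r) > 2$ for all large $r$ — but this follows directly from convexity of $\kappa$ and $\lim_{q\to\infty}\kappa'(q) = +\infty$, which is granted by the hypothesis.
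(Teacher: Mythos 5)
Your proposal is correct and takes essentially the same route as the paper: the paper simply invokes the Cram\'er--Chernoff argument (citing Biggins) applied to the moment identity $\langle\mu_t,h_q\rangle=\exp(t\kappa(q))$, while you unpack that same exponential tilting and localization argument explicitly in terms of the underlying spectrally negative L\'evy process $\xi$ from \autoref{T1}. One small slip to note: the tilting parameter should satisfy $\Phi'(q)=r$, equivalently $\kappa'(q+2)=r$, rather than ``$\Phi'(q+2)=r$''; you then correctly set $\theta(r)=q+2$ and the final exponent $\kappa(\theta(r))-r\theta(r)=-\kappa^*(r)$ comes out right.
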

\begin{proof}
  This follows easily from the identity $\langle \mu_t, h_q\rangle = \exp(t\kappa(q))$
  by adapting the classical arguments of Cramér and Chernoff; see, for instance, Theorem 1 in \citet{Bigg}.
\end{proof}

The estimate of \autoref{C2} can easily be reinforced by using the local central limit theorem.
Here is a typical example (compare with Theorem 1.3 in \cite{DouEsc}).

\begin{cor}\label{C3}
  Suppose that $a>0$ or \eqref{Eqcondnu1} fails,
  and further that $\kappa'(q)<0$ for some $q$. 
  Then $\theta(0)$  is well-defined, $0<\kappa''(-\theta(0))<\infty$, 
  and  for every $f\in{\mathcal C}_c$, we have
  $$\langle \mu_t, f\rangle
    \sim
    \frac{\e^{t\kappa(\theta(0))}}{\sqrt{2\pi t \kappa''(\theta(0))}}\int_0^{\infty} f(x) x^{\theta(0)-1} \d x.$$
\end{cor}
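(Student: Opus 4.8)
The plan is to deduce Corollary \ref{C3} from the representation $\mu_t(\dd x) = \e^{t\kappa(2)} x^{-2}\rho_t(\dd x)$ of \autoref{T1}, where $\rho_t = \P(\exp(\xi(t))\in\cdot)$ for the spectrally negative L\'evy process $\xi = \xi_2$ with Laplace exponent $\Phi = \Phi_2$. Concretely, for $f\in\mathcal C_c$ we have
\[
  \ip{\mu_t}{f} = \e^{t\kappa(2)} \E\bigl[ \e^{-2\xi(t)} f(\e^{\xi(t)}) \bigr]
  = \e^{t\kappa(2)} \E\bigl[ g(\xi(t)) \bigr],
\]
with $g(y) = \e^{-2y} f(\e^{y})$, a bounded continuous function of compact support on $\R$. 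So the task reduces to a sharp local limit estimate for $\E[g(\xi(t))]$, i.e. for the distribution of $\xi(t)$ near the relevant point on the real line, to which the classical local central limit theorem for L\'evy processes (obtained via an Esscher change of measure and the saddle-point method) applies.

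First I would translate the hypotheses. The assumption ``$a>0$ or \eqref{Eqcondnu1} fails'' ensures, as noted before \autoref{C2}, that $\kappa'(q)\to+\infty$ as $q\to\infty$; equivalently $\Phi'(q)\to\infty$, so $\xi$ is not the degenerate deterministic drift and $\Phi$ is steep at $+\infty$. The extra assumption ``$\kappa'(q)<0$ for some $q$'' combined with convexity of $\kappa$ guarantees that $\kappa'$ takes the value $0$ at a unique point, which one writes as $\kappa'(\theta(0))=0$ in the notation introduced before the corollary (the minimiser of $\kappa$ over $q>0$); here $\theta(0)>0$ since $\kappa'(0)\le 0$ would contradict... more carefully, one checks $\theta(0)$ lies in the interior of $\dom\kappa$, so that $\kappa''(\theta(0))$ is finite, and strict convexity at an interior minimiser gives $\kappa''(\theta(0))>0$; I would spell out the smoothness of $\kappa$ on $(\dom\kappa)^\circ$ via \autoref{Lbounds} and differentiation under the integral sign. (I note the corollary statement writes $\kappa''(-\theta(0))$ and $x^{\theta(0)-1}$ with signs that I would reconcile against the convention $M_t(z)=\ip{\mu_t}{h_{z-1}}$; the substantive content is the local CLT below, and I would fix the index bookkeeping so that the exponent matches $\kappa(q)$ evaluated at the minimiser.)

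Next, the core step: apply the Esscher transform at parameter $\theta := \theta(2)$ chosen so that the exponentially tilted L\'evy process $\xi^{(\theta)}$, with Laplace exponent $q\mapsto \Phi(q+\theta)-\Phi(\theta)$, is centred, i.e. has mean zero; then $\E[g(\xi(t))] = \e^{-t\Phi(\theta)} \E[\e^{-\theta\xi^{(\theta)}(t)} g(\xi^{(\theta)}(t))]$. Because $\xi^{(\theta)}$ is a non-degenerate centred L\'evy process with finite variance $\sigma^2_\theta = \Phi''(\theta) t^{-1}\cdot t = \Phi''(\theta)$ per unit time, the local central limit theorem (in the form valid for L\'evy processes whose law at time $t$ has a density for large $t$, or more robustly the smoothed/weak local CLT against continuous compactly supported test functions — see e.g. Stone's local limit theorem, or the argument via characteristic functions) yields
\[
  \E\bigl[ h(\xi^{(\theta)}(t)) \bigr] \sim \frac{1}{\sqrt{2\pi t \Phi''(\theta)}} \int_{\R} h(y)\,\dd y
\]
for bounded continuous compactly supported $h$, as $t\to\infty$. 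Applying this with $h(y)=\e^{-\theta y}g(y) = \e^{-(\theta+2)y} f(\e^y)$ and collecting constants — $\e^{t\kappa(2)}\e^{-t\Phi(\theta)} = \e^{t(\kappa(2)-\Phi(\theta))}$, which equals $\e^{t\kappa(\theta+2)}$ up to the sign conventions since $\Phi(\theta) = \kappa(\theta+2)-\kappa(2)$ — and changing variables $x=\e^y$, $\dd y = \dd x/x$, gives exactly the claimed asymptotic with $\theta(0)$ playing the role of $\theta+2$ (this is the point where the index translation between $\kappa$ and $\Phi$ must be handled cleanly). I would cite the local limit theorem for L\'evy processes (for instance in the form available in \cite{Sato} or the references on Cram\'er--Chernoff/local CLT already invoked for \autoref{C2}, such as \citet{Bigg}).

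The main obstacle I anticipate is making the local central limit theorem applicable in the generality allowed here: the L\'evy process $\xi$ may be a pure-jump process of bounded variation (when $a=0$ but \eqref{Eqcondnu1} fails), in which case $\xi(t)$ need not have an absolutely continuous law, and a naive ``density'' version of the local CLT fails. The fix is to use the \emph{weak} local limit theorem — testing against a continuous compactly supported function rather than asking for pointwise density convergence — which holds under the much weaker non-lattice condition; and here non-latticeness is automatic because the L\'evy measure of $\xi$ has support containing an interval (it is the image of $(1-y)^2 K(\dd y)$ and of $y^2K(\dd y)$ under $y\mapsto\ln y$ etc., which, together with the growth term, precludes the support of $\xi(t)$ being contained in a lattice unless $K=0$ and $a=0$, a case excluded by hypothesis). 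Verifying this non-lattice property carefully, and correspondingly stating the local CLT in the ``integrated against $\mathcal C_c$'' form that matches the conclusion of the corollary, is where most of the real work lies; the rest is the Esscher-transform bookkeeping and constant-chasing sketched above.
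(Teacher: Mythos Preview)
Your approach is essentially the same as the paper's: represent $\mu_t$ via \autoref{T1}, Esscher-tilt the underlying L\'evy process so that it becomes centred with finite variance $\kappa''(\theta(0))$, and invoke the local central limit theorem; the paper does this in three lines by writing directly $\mu_t(\dd x)=\e^{t\kappa(\theta(0))}x^{-\theta(0)}\P(\exp(\tilde\xi(t))\in\dd x)$ for the tilted process $\tilde\xi$, which collapses your two-step bookkeeping ($\xi_2$ then a further tilt) into one.

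Your extra care about the applicability of the local CLT is warranted, since the paper simply asserts it. One refinement: your claim that the L\'evy measure ``has support containing an interval'' is not justified in general, since $K$ may be purely atomic. The correct argument is that under the hypothesis ``$a>0$ or \eqref{Eqcondnu1} fails'', either there is a Gaussian component, or the L\'evy measure is infinite with mass accumulating at $0$ (because $K$ has infinite mass near $y=1$, so the image of $y^2K(\dd y)$ under $y\mapsto\log y$ does too); in either case the distribution of $\xi(t)$ cannot be lattice, and the weak (integrated against $\mathcal C_c$) form of the local CLT applies.
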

\begin{proof}
  The first assertion about the existence of $\theta(0)$ and $\kappa''(\theta(0))$ are immediate from the convexity 
  of $\kappa$ and the fact that $\lim_{+\infty} \kappa=+\infty$.

  The function $\tilde \Phi(q)\coloneqq\kappa(q+\theta(0))-\kappa(\theta(0)) = \Phi(q+\theta(0))-\Phi(\theta(0))$ 
  is the Laplace exponent of a spectrally negative L\'evy process $(\tilde \xi(t))_{t\geq 0}$ which is centered and
  has finite variance $\kappa''(\theta(0))$. 
  Further, we see from Esscher transform and \autoref{T1} that
  $$\mu_t(\d x) = \e^{t\kappa(\theta(0))} x^{-\theta(0)} \P(\exp(\tilde \xi(t))\in \d x).$$
  Our claim then follows readily from the local central limit theorem for the L\'evy process.
\end{proof}

\begin{cor}\label{C4} 
  If $a > 0$ or the absolutely continuous component of $K(\d y)$ has an infinite total mass,
  then $\mu_t$ is absolutely continuous for every $t>0$.
\end{cor}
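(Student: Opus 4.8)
The plan is to reduce the absolute continuity of $\mu_t$ to that of the law of $\exp(\xi_2(t))$, and hence to the absolute continuity of the one-dimensional distributions of the spectrally negative Lévy process $\xi_2$ from \autoref{L1}. By \autoref{T1}, $\mu_t(\dd x) = \e^{t\kappa(2)} x^{-2}\rho_t(\dd x)$, and $\rho_t$ is the law of $\exp(\xi_2(t))$; since $x \mapsto x^{-2}$ is a strictly positive continuous function on $(0,\infty)$, $\mu_t$ is absolutely continuous if and only if $\rho_t$ is, if and only if the law of $\xi_2(t)$ on $\R$ has a density. So the whole statement is really a statement about when a spectrally negative Lévy process has absolutely continuous time-$t$ marginals.

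Next I would read off the characteristics of $\xi_2 = \xi$ from the Lévy–Khintchin representation \eqref{EqLKPhi} established in the proof of \autoref{L1}. There, the Gaussian coefficient of $\xi$ is $\sigma^2 = 2a$, and the Lévy measure is $\Upsilon = \Lambda + \Pi$, where $\Lambda$ is the image of $y^2 K(\dd y)$ under $y\mapsto \ln y$ (supported on $[-\ln 2,0)$) and $\Pi$ is the image of $(1-y)^2 K(\dd y)$ under $y \mapsto \ln(1-y)$ (a finite measure on $(-\infty,-\ln2]$). Now invoke the standard criterion for absolute continuity of Lévy-process marginals: if either $\sigma > 0$ (i.e. $a>0$), or the Lévy measure is infinite, then for every $t>0$ the law of $\xi(t)$ is absolutely continuous. (This is classical; see e.g. Sato \cite[Theorem 27.7]{Sato}, where the relevant sufficient condition is that $\sigma>0$ or $\Upsilon(\R)=\infty$.) So it remains to translate the hypothesis "$a>0$ or the absolutely continuous component of $K$ has infinite total mass" into "$\sigma>0$ or $\Upsilon$ is infinite". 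The case $a>0$ is immediate. For the other case: $\Pi$ is always finite, so $\Upsilon$ is infinite exactly when $\Lambda$ is infinite; and $\Lambda$ is infinite exactly when $\int_{[1/2,1)} y^2 K(\dd y) = \infty$, which (since $y^2$ is bounded between $1/4$ and $1$ on $[1/2,1)$) holds exactly when $K([1/2,1)) = \infty$. Finally, $K([1/2,1))=\infty$ whenever the absolutely continuous part of $K$ has infinite mass. This gives the claim, and I would close by applying the map $x\mapsto x^{-2}$ back as in the first paragraph.

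The only delicate point is getting the exact form of the absolute-continuity criterion for Lévy processes right — one wants the cleanest sufficient condition that covers both the Gaussian and the infinite-activity cases with no extra regularity assumptions on $\Upsilon$. I expect this to be a citation rather than an obstacle: the condition "$\sigma^2>0$ or $\Upsilon$ infinite $\Rightarrow$ $\mathrm{law}(\xi(t))$ absolutely continuous for all $t>0$" is standard and can be quoted directly. A minor subtlety is that we only need one direction (sufficiency), so there is no need to worry about the converse or about sharper statements distinguishing which $t$ the density appears for — here it is all $t>0$ simultaneously, which is exactly what the cited result delivers. The translation steps (relating $\sigma^2$ to $a$, $\Upsilon$ to $K$) are the elementary bookkeeping already carried out in the proof of \autoref{L1}, so no new computation is needed.
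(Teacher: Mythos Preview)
Your reduction via \autoref{T1} to the absolute continuity of the law of $\xi_2(t)$ is exactly right and matches the paper. The gap is in the L\'evy-process criterion you invoke. It is \emph{not} true that an infinite L\'evy measure (with $\sigma=0$) forces the time-$t$ marginals to be absolutely continuous: there exist L\'evy processes with $\sigma=0$ and $\Upsilon(\R)=\infty$ whose one-dimensional laws are continuous singular. The sufficient condition in \cite[Theorem~27.7]{Sato} is that either $\sigma>0$, or the \emph{absolutely continuous part} of the L\'evy measure has infinite mass. This is precisely why the hypothesis of the corollary singles out the absolutely continuous component of $K$ rather than $K$ itself, and why the paper also cites Lemma~27.1 of \cite{Sato}.

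Consequently, your translation step is too lossy: you argue ``a.c.\ part of $K$ infinite $\Rightarrow K$ infinite $\Rightarrow \Upsilon$ infinite'', discarding exactly the information you need. The right translation is: since $\Pi$ is finite, the a.c.\ part of $\Upsilon=\Lambda+\Pi$ is infinite iff that of $\Lambda$ is; and $\Lambda$ is the image of $y^2K(\dd y)$ under the $C^1$ diffeomorphism $y\mapsto\ln y$ on $[1/2,1)$, with $y^2\in[1/4,1]$ bounded away from $0$ and $\infty$, so an infinite a.c.\ component of $K$ yields an infinite a.c.\ component of $\Lambda$. With this correction, \cite[Theorem~27.7 and Lemma~27.1]{Sato} apply and the rest of your argument goes through unchanged.
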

\begin{proof}
  Using \citet[Theorem 27.7 and Lemma 27.1]{Sato}, it follows
  from the assumptions of the statement that
  the one-dimensional distributions of the L\'evy process $\xi(t)$ are 
  absolutely continuous for every $t>0$.
  Our claim follows from the representation in \autoref{T1}.
\end{proof}

\section{The self-similar case}
\label{s:ss}

We now turn our attention to the growth-fragmentation equation \eqref{EqFG} 
for ${\mathcal L}={\mathcal L}_{\alpha}$ given by \eqref{Eqop1} and $\alpha\neq 0$. 
We first point out that the function $\kappa$ is non-increasing if and only if $a=0$,
the dislocation measure $K$ fulfills \eqref{Eqcondnu1}, and
$$
  b+ \int_{[1/2,1)}(1-y) K(\d y) \leq 0.
$$
In this case, the operator ${\mathcal L}_{\alpha}$ can be expressed in the form \eqref{Eqop0} with
$c\leq 0$,  and \eqref{EqFG} is then a pure fragmentation equation as studied by Haas \cite{Haas}. 
To avoid duplication of existing literature, this case will be implicitly excluded hereafter. 

Recall the  notation $h_q(x)=x^q$ for $x>0$. In the self-similar case, power functions are no 
longer eigenfunctions of the operator ${\mathcal L}_{\alpha}$; however, there is the simple relation
\begin{equation}\label{Eqvpss}
  {\mathcal L}_{\alpha}h_q=\kappa(q) h_{q+\alpha}, \qquad q \in \dom \kappa;
\end{equation}
see \autorefpref{L0}{i:L0:eigenfunction}. 
Hence, if \eqref{EqFG} applies to power functions, the linear equation \eqref{EqMelh} for the
Mellin transform $ {M}_t(z) = \langle \mu_t, h_{z-1}\rangle$ in the homogeneous case has
to be replaced by the system
\begin{equation}\label{EqMelss}
  \partial_t {M}_t(1+q) = \kappa(q){M}_t(1+q+\alpha).
\end{equation}

We make  the fundamental assumption, that
\begin{equation}\label{Eqnegspeed}
\inf_{q\geq 0} \kappa(q) < 0,
\end{equation}
which is implicitly enforced throughout this section. 
The role and the importance of \eqref{Eqnegspeed} shall become clear in the sequel.
%{\tt We must  of course discuss explosion when $\inf \kappa(q) > 0$. Can it be seen analytically?}
Recall that $\kappa$ is a convex function on $\R$, and is ultimately increasing,
since we are excluding the case when $\kappa$ is non-increasing throughout \autoref{s:ss}.
Hence, condition \eqref{Eqnegspeed}
ensures the existence of a unique $\omega_{_+}\in\R$ with
\begin{equation}\label{EqMalthus}
\kappa(\omega_{_+})=0 \hbox{ and } \kappa'(\omega_{_+})>0.
\end{equation}
We refer to $\omega_{_+}$ as the \emph{Malthusian parameter}.

The sign of the scaling parameter $\alpha$ plays a crucial role, and we shall study the two cases
separately, even though some ideas are similar.

\subsection{The case \texorpdfstring{$\alpha < 0$}{alpha < 0}}
\label{s:alpha-neg}

We now focus on the case $\alpha <0$.  
We start by observing that the existence of a Malthusian parameter enables us to view \eqref{EqMelss} as a closed
system for an arithmetic  sequence,  and thus 
to solve it.

\begin{lem}\label{P3}
  Consider a sequence of functions 
  ${M}_{\bullet}(1+q)\from [0,\infty)\to (0,\infty)$ for
  \linebreak % (!) prevent math being split (!)
  $q=\omega_{_+}-k\alpha$, $k=-1,0,1, \ldots$,
  with ${M}_{0}(1+q)=1$.
  Suppose that \eqref{EqMelss} and \eqref{Eqnegspeed} hold and recall that $\omega_+$ is the Malthusian parameter 
  defined by \eqref{EqMalthus}. Then ${M}_t(1+\omega_{_+})=  1$ for all $t\geq 0$ and 
  for $k=1,2, \ldots$, we have 
  \[ {M}_t(1+\omega_{_+}-k\alpha)
    = 1 + \sum_{\ell=1}^k \frac{\kappa(\omega_{_+}-\alpha k)\cdots \kappa(\omega_{_+}-\alpha(k-\ell+1)) }{\ell!}\, t^{\ell} . \]
\end{lem}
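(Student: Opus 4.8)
The plan is to solve the triangular system \eqref{EqMelss} by induction on $k\ge 0$, treating the asserted identity $M_t(1+\omega_{_+})=1$ simply as the case $k=0$ (empty sum). The structural point is that, since $\alpha<0$, the substitution $q=\omega_{_+}-k\alpha$ turns $q+\alpha$ into $\omega_{_+}-(k-1)\alpha$, so \eqref{EqMelss} expresses $\partial_t M_t(1+\omega_{_+}-k\alpha)$ in terms of $M_t(1+\omega_{_+}-(k-1)\alpha)$ alone. Hence the arithmetic sequence $(\omega_{_+}-k\alpha)_{k\ge -1}$ forms a closed, triangular system which can be integrated level by level.

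First I would check that every $\kappa$-value occurring in the formula is finite. The effective domain of the convex function $\kappa$ is an interval which, by \autorefpref{Lbounds}{i:Lbounds:dom}, contains $[2,\infty)$, and it also contains $\omega_{_+}$ since $\kappa(\omega_{_+})=0$; consequently it contains every $q\ge\min(\omega_{_+},2)$, and in particular the points $\omega_{_+}-k\alpha=\omega_{_+}+k\abs{\alpha}$ for all $k\ge 1$. For the base case $k=0$, equation \eqref{EqMelss} with $q=\omega_{_+}$ reads $\partial_t M_t(1+\omega_{_+})=\kappa(\omega_{_+})M_t(1+\omega_{_+}+\alpha)=0$, using the defining property $\kappa(\omega_{_+})=0$ of the Malthusian parameter together with the finiteness of $M_t(1+\omega_{_+}+\alpha)$ granted by hypothesis; combined with $M_0(1+\omega_{_+})=1$ this gives $M_t(1+\omega_{_+})\equiv 1$.

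For the inductive step, assume the formula holds at level $k-1\ge 0$, so that $s\mapsto M_s(1+\omega_{_+}-(k-1)\alpha)$ is the explicit polynomial of degree $k-1$. Applying \eqref{EqMelss} with $q=\omega_{_+}-k\alpha$ and integrating from $0$ to $t$, using $M_0(1+\omega_{_+}-k\alpha)=1$, yields
\[
  M_t(1+\omega_{_+}-k\alpha) = 1 + \kappa(\omega_{_+}-k\alpha)\int_0^t M_s\bigl(1+\omega_{_+}-(k-1)\alpha\bigr)\,\dd s.
\]
Substituting the induction hypothesis and integrating term by term — the constant $1$ producing the $\ell=1$ term $\kappa(\omega_{_+}-k\alpha)\,t$, and the generic monomial $t^\ell$ becoming $t^{\ell+1}/(\ell+1)$ — reproduces the claimed degree-$k$ polynomial after the reindexing $\ell\mapsto\ell+1$: multiplying by $\kappa(\omega_{_+}-k\alpha)$ prepends exactly one further factor to each product in the numerators, while each $\ell!$ in the denominators becomes $(\ell+1)!$. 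This closes the induction.

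I do not expect a genuine obstacle; the argument is routine triangular integration. The only points needing a word of care are (a) that the base case rests squarely on $\kappa(\omega_{_+})=0$, (b) the finiteness of the coefficients appearing in the formula, and (c) the index bookkeeping needed to match the product $\kappa(\omega_{_+}-\alpha k)\cdots\kappa(\omega_{_+}-\alpha(k-\ell+1))$ against the integrated polynomial after reindexing. If one prefers not to assume that $t\mapsto M_t(1+q)$ is differentiable, one reads \eqref{EqMelss} throughout in its integrated form; the regularity needed to differentiate is then recovered automatically, since at each stage $M_t(1+\omega_{_+}-k\alpha)$ turns out to be a polynomial in $t$.
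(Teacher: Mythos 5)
Your proposal is correct and follows essentially the same approach as the paper, which simply notes that $\kappa(\omega_{_+})=0$ forces $M_t(1+\omega_{_+})$ to be constant, and that the remaining system is solved inductively in $k$. You have filled in the routine details of that induction, and your observations about the domain of $\kappa$ and the option of working with the integrated form of \eqref{EqMelss} are accurate but not spelled out in the paper.
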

\begin{proof}
  The equation \eqref{EqMelss} applied to the Malthusian exponent $\omega_{_+}$
  implies that the function $t\mapsto {M}_t(1+\omega_{_+})$ is constant. 
  We can then solve
  \eqref{EqMelss} for $q=\omega_{_+}-\alpha k$ and $k=1, 2, \ldots$ by induction in order to obtain the given formula.
\end{proof}
 
In comparison with  the homogeneous case, \autoref{P3} is a much weaker result than \eqref{SolMelh}, 
as we are not able to compute the whole Mellin transform of a solution, but merely its moments for orders
forming an arithmetic sequence. There is hence an additional crucial issue: it does not suffice
to find a family of measures having the desired moments, but also to ensure that the moment problem is determining. 
It turns out that moment calculations which were performed in \cite{BY-moments}
for self-similar Markov processes enable us  to solve the moment problem in \autoref{P3},
and check that this indeed yields a solution to \eqref{Eqop1}.
Similar calculations also point at a rather surprising result, 
namely that the self-similar growth-fragmentation permits
spontaneous generation!

\begin{thm}\label{T2}
  Assume \eqref{Eqnegspeed} and $\alpha <0$.
  \begin{enumerate}[(i)]
    \item\relax\label{i:T2:pos}
      For every $t\geq 0$, there exists a unique measure $\mu^{ }_t$ on $(0,\infty)$ such that 
      \linebreak % (!) prevent math being split (!)
      $\langle \mu^{ }_t, h_{\omega_{_+}}\rangle=1$ and for every integer $k\geq 1$,
      $$\langle \mu^{ }_t, h_{\omega_{_+}-k\alpha}\rangle
        = 1 + \sum_{\ell=1}^k  
        \frac{\kappa(\omega_{_+}-\alpha k)\cdots \kappa(\omega_{_+}-\alpha(k-\ell+1)) }{\ell!}\, t^{\ell} .$$
      In particular, $\mu^{ }_0=\delta_1$ and the family $(\mu^{ }_t)_{t\geq0}$ 
      solves \eqref{EqFG} for all $f\in{\mathcal C}^{\infty}_c(0,\infty)$
      when ${\mathcal L}={\mathcal L}_{\alpha}$ is given by \eqref{Eqop1}. 

    \item\relax\label{i:T2:zero}
      For every $t>0$, there exists a unique measure $\gamma^{  }_t$ on
      $(0,\infty)$ such that $\langle \gamma^{  }_t, h_{\omega_{_+}}\rangle=1$ and 
      $$\langle \gamma^{  }_t, h_{\omega_{_+}-k\alpha}\rangle
        = t^k\frac{\kappa(\omega_{_+}-\alpha)\cdots \kappa(\omega_{_+}-\alpha k)}{k!}
        \qquad \hbox{for every integer $k\geq 1$}.$$
      If we further set $\gamma^{  }_0\equiv 0$, then the family $(\gamma^{  }_t)_{t\geq0}$ solves \eqref{EqFG} for all $f\in{\mathcal C}^{\infty}_c(0,\infty)$
      when ${\mathcal L}={\mathcal L}_{\alpha}$ is given by \eqref{Eqop1}. 
  \end{enumerate}
\end{thm}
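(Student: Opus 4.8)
The plan is to translate the moment problem into a statement about one-dimensional distributions of a positive self-similar Markov process, and then exploit the entrance-at-boundary behaviour that is special to the $\alpha<0$ case. Concretely, I would first take the spectrally negative L\'evy process $\xi = \xi_{\omega_{_+}}$ furnished by \autoref{L1} with $\omega = \omega_{_+}$, whose Laplace exponent is $\Phi_{\omega_{_+}}(q) = \kappa(\omega_{_+}+q)$; note $\Phi_{\omega_{_+}}(0) = \kappa(\omega_{_+}) = 0$ and $\Phi_{\omega_{_+}}'(0) = \kappa'(\omega_{_+}) > 0$, so $\xi$ drifts to $+\infty$. Via the Lamperti transform this yields a positive self-similar Markov process $X = (X(t))_{t\ge 0}$ of index $\alpha < 0$. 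Because $\alpha<0$ and $\xi$ drifts to $+\infty$, $X$ reaches $\infty$ in infinite time but, run in reverse / by the known theory (Bertoin--Yor, Bertoin--Caballero--Chaumont), the time-reversal or the associated excursion-type object can enter $(0,\infty)$ continuously from $0$ (respectively from $\infty$). The measures $\mu_t$ and $\gamma_t$ should be, up to the exponential tilt by $h_{\omega_{_+}}$, the one-dimensional laws of $X$ started respectively from a unit atom at $1$ and from the boundary point $0$ (this is the source of spontaneous generation).

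The key steps, in order: (1) Record the moment identity $\ssE_x[X(t)^{q}] $ and its analogue under the $h_{\omega_{_+}}$-tilt; here the crucial input is the Bertoin--Yor recursion \cite{BY-moments}, which expresses $\ssE_1[X(t)^{\omega_{_+}-k\alpha}]$ through the chain $\kappa(\omega_{_+}-\alpha), \ldots, \kappa(\omega_{_+}-\alpha k)$ exactly as in the statement. Defining $\mu_t(\dd x) := \ssE_1\bigl[ \, \cdot \, \bigr]$-type measure through $\langle \mu_t, f\rangle = \ssE_1[ X(t)^{\omega_{_+}} f(X(t)) / X(t)^{\omega_{_+}}]$ — more precisely $\mu_t(\dd x) = x^{-\omega_{_+}} \cdot (\text{law of } X(t) \text{ under } 1\text{-tilt})$ — gives a measure with precisely the prescribed moments. (2) Show that this moment sequence, indexed by the arithmetic progression $\omega_{_+} - k\alpha$, $k \ge -1$, is determining among measures on $(0,\infty)$: since $|\alpha|>0$ the exponents $\omega_{_+} - k\alpha \to +\infty$ and one checks a Carleman-type growth bound on $\langle \mu_t, h_{\omega_{_+}-k\alpha}\rangle$ from the explicit polynomial-in-$t$ formula (the factors $\kappa(\omega_{_+}-\alpha j)$ grow only polynomially in $j$ because $\kappa$ is at worst quadratic on its domain once shifted appropriately); this secures uniqueness and hence well-definedness of $\mu_t$ and $\gamma_t$. (3) Verify that $t \mapsto \mu_t$ and $t \mapsto \gamma_t$ are vaguely continuous and solve \eqref{EqFG}: test against $h_q$ for $q$ in the arithmetic progression, differentiate the explicit formula in $t$, and match with $\langle \mu_t, \mathcal L_\alpha h_q\rangle = \kappa(q)\langle \mu_t, h_{q+\alpha}\rangle$ using \eqref{Eqvpss}; then pass from $h_q$ to general $f \in \Ctest(0,\infty)$ by Mellin inversion, using \autorefpref{Lbounds}{i:Lbounds:asym} to control the behaviour of $\mathcal L_\alpha f$ at $\infty$. (4) For part \eqref{i:T2:zero}, identify $\gamma_t$ with the tilted law of $X(t)$ issued from $0$; the entrance law from $0$ exists precisely because $\alpha<0$ and the Lamperti exponent drifts to $+\infty$, and its moments are computed by the same \cite{BY-moments} machinery, dropping the constant term (which corresponds to the atom at the starting point disappearing) — this is exactly the formula with no ``$1+$'' and with the product starting at $\kappa(\omega_{_+}-\alpha)$. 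Finally one checks $\gamma_t \Rightarrow 0$ vaguely as $t \downto 0$, so that setting $\gamma_0 \equiv 0$ is consistent and the solution is genuinely started from the null measure, establishing non-uniqueness.

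The main obstacle I anticipate is step (2), the determinacy of the moment problem restricted to an arithmetic progression of exponents: having all moments $\langle \mu_t, h_{\omega_{_+}-k\alpha}\rangle$ for $k \ge -1$ is a priori much weaker than having $\langle \mu_t, h_q \rangle$ for all $q$ in an interval, and one must argue carefully — either via the explicit growth rate of these moments in $k$ together with a Carleman/Hausdorff-type criterion on $(0,\infty)$, or by appealing directly to the self-similar-Markov-process representation, where the law of $X(t)$ is genuinely determined and its arithmetic-progression moments determine it because of the scaling structure. A secondary technical point is making rigorous the passage from the moment/Mellin identity to a solution of the weak equation \eqref{EqFG} for \emph{all} $\Ctest$ test functions; this requires knowing that $\langle \mu_s, \mathcal L_\alpha f\rangle$ is locally integrable in $s$, which in turn needs the $o(x^{q+\alpha})$ decay of \autorefpref{Lbounds}{i:Lbounds:asym} to be paired with a moment bound on $\mu_s$ of order $q+\alpha$ for some $q \in \dom\kappa$ — one must check that such a $q$ indeed lies in the arithmetic progression, or interpolate.
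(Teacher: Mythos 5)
Your overall strategy is the same as the paper's: interpret the measures through the spectrally negative Lévy process $\xi_{\omega_+}$ and the associated $(-\alpha)$-pssMp via Lamperti, compute the moments on the arithmetic progression $\omega_+-k\alpha$ using \cite{BY-moments}, and recognise $\gamma_t$ as the entrance law from $0$. Two points are worth flagging, one a simplification and one a genuine gap. First, the determinacy worry you single out as the ``main obstacle'' in step~(2) is already packaged inside Proposition~1 of \cite{BY-moments}: that result asserts both the existence \emph{and} uniqueness of a probability measure with the given arithmetic-progression moments, provided $\Phi_{\omega_+}'(0)>0$, so there is no need to run a separate Carleman/Hausdorff argument (which, as you observe, would be delicate because the moments grow faster than exponentially in $k$ when $K$ is rich). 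The paper's proof simply quotes this proposition.

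Second, and more seriously, the ``Mellin inversion'' you propose in step~(3) for passing from $f=h_q$ with $q=\omega_+-k\alpha$ to arbitrary $f\in\Ctest(0,\infty)$ does not apply: Mellin inversion needs control of $\mathcal{M}\mu_t$ on a full vertical strip in $\C$, whereas here one only knows the Mellin transform at the discrete points of an arithmetic progression. The correct way to close this gap (and the route the paper actually takes, via \autoref{LanC0} and \autoref{c:mu-alpha-neg}) is your alternative suggestion: identify $\mu_t$ with the $h_{-\omega_+}$-tilt of the law of the pssMp $X_+(t)$, then compute directly with the generator. Concretely, Dynkin's formula for the exponential Lévy process $\exp\xi_+$, followed by the Lamperti time change, shows that $g(X_+(t))-\int_0^t h_\alpha(X_+(s))\,\mathcal{A}_+g(X_+(s))\,\dd s$ is a local martingale; \autorefpref{Lbounds}{i:Lbounds:asym} together with $\alpha<0$ then makes the integrand bounded, promoting it to a true martingale and yielding the weak form of \eqref{EqFG} for all test functions without any inversion. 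Your intuition about the entrance law from $0$ for part~(ii) is correct, and in the paper's proof this again reduces to an application of Proposition~1 of \cite{BY-moments} (which gives the moment formula with the constant term dropped), with vague continuity including at $t=0$ following from the explicit polynomial expression.
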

\begin{proof}
  \begin{enumerate}[(i)]
    \item 
      Let us define $\Phi_{_+} \coloneqq \Phi_{\omega_+} = \kappa(\cdot +\omega_+)$,
      which, as we saw in \autoref{L1}, is the Laplace exponent of the Lévy
      process $\xi_{_+} \coloneqq \xi_{\omega_+}$.
      Observe that
      \linebreak % (!) prevent math being split (!)
      $\Phi_{_+}'(0)=\kappa'(\omega_{_+})>0$, 
      so this L\'evy process has a strictly positive and finite first moment.

      Proposition 1 in \cite{BY-moments} then ensures, for every $t>0$,
      the existence of a unique probability measure $\rho^{  }_t$ on $(0,\infty)$,
      such that for every integer $k\geq 1$, 
      $$\langle \rho^{  }_t, h_{-\alpha k}\rangle 
        = 1 + \sum_{\ell=1}^k 
        \frac{\kappa(\omega_{_+}-\alpha k)\cdots \kappa(\omega_{_+}-\alpha(k-\ell+1)) }{\ell!}\, t^{\ell} \,,$$
      so that in particular $\rho^{  }_0=\delta_1$.
      Thus we may set $\mu^{ }_t(\d x) = x^{-\omega_{_+}} \rho^{  }_t(\d x)$,
      and then $\langle \mu^{ }_t, h_{\omega_{_+}-k\alpha}\rangle$ is given
      as in the statement for every integer $k\geq 0$. That this determines $\mu^{ }_t$ 
      derives from the uniqueness of $\rho^{  }_t$. 

      Now, using \eqref{Eqvpss}, we immediately check that 
      $\langle \mu_t, h_{\omega_{_+}-k\alpha}\rangle$ satisfies \eqref{EqMelss}.
      It then follows   that $(\mu^{ }_t)_{t\geq 0}$ solves \eqref{EqFG} for every $f\in{\mathcal C}^{\infty}_c(0,\infty)$
      (recall that the probability measure $\rho^{  }_t$ is determined by its entire moments).
      Finally, the map $t\mapsto \langle \rho^{  }_t, h_{-\alpha k}\rangle$ is continuous, and we deduce that
      $(\rho^{  }_t)_{t\geq 0}$ is vaguely continuous (using again the fact that $\rho^{  }_t$ is determined 
      by its moments $\langle \rho^{  }_t, h_{-\alpha k}\rangle$ for $k\in\N$).
      Hence the same holds for $(\mu^{ }_t)_{t\geq 0}$. 
    \item
      Recall from above that $\Phi_{_+}=\kappa(\omega_{_+}+\cdot)$ is the Laplace exponent of a
      spectrally negative L\'evy process which has strictly positive and finite first moments.
      Proposition 1 in \cite{BY-moments} ensures for every $t>0$  the existence of a unique probability measure 
      $\pi^{  }_t$ on $(0,\infty)$ such that its
      moments  are given by 
      \[ \langle \pi^{  }_t, h_{-\alpha k}\rangle 
        = t^k\frac{\Phi_{_+}(-\alpha)\cdots \Phi_{_+}(-\alpha k)}{k!}
        = t^k\frac{\kappa(\omega_{_+}-\alpha)\cdots \kappa(\omega_{_+}-\alpha k)}{k!}, \]
      for $k=1,2, \dotsc$, 
      and this determines $\pi^{  }_t$. 
      It follows immediately that $(\pi^{  }_t)_{t\geq 0}$ is vaguely continuous (recall that $\pi^{  }_0=0$).

      We then define for $t>0$
      $$\gamma^{  }_t(\d x) = x^{-\omega_{_+}} \pi^{  }_t(\d x)\,, \qquad x>0,$$
      so 
      $$\langle \gamma^{  }_t, h_{\omega_{_+}-k\alpha}\rangle
        = t^k\frac{\kappa(\omega_{_+}-\alpha)\cdots \kappa(\omega_{_+}-\alpha k)}{k!}.$$
      Then  \eqref{Eqvpss} entails that for every integer $k\geq 1$, there is the identity 
      \begin{eqnarr*}
        \partial_t\langle \gamma^{  }_t, h_{\omega_{_+}-k\alpha}\rangle
        &=& 
        t^{k-1}\frac{\kappa(\omega_{_+}-\alpha)\cdots \kappa(\omega_{_+}-\alpha k)}{(k-1)!}\\
        &=&
        \kappa(\omega_{_+}-\alpha k)\langle \gamma^{  }_t, h_{\omega_{_+}-(k-1)\alpha}\rangle
        = \langle \gamma^{  }_t, \mathcal{L}h_{\omega_{_+}-k\alpha}\rangle, 
      \end{eqnarr*}
      and the conclusion follows just as in (i). \qedhere
  \end{enumerate}
\end{proof}

\autorefpref{T2}{i:T2:zero} entails that uniqueness of the solution fails
when one only requires \eqref{Eqop1} to be fulfilled for all
$f\in{\mathcal C}^{\infty}_c(0,\infty)$,
which contrasts sharply with the results of Haas \cite{Haas} for the pure-fragmentation equation.
We conjecture that the solution $\mu^{ }_t$ given in \autorefpref{T2}{i:T2:pos} is minimal,
in the sense that if $(\tilde \mu_t)_{t\geq 0}$ is another solution
with the same initial condition $\tilde \mu_0=\delta_1$,
then $\mu^{ }_t \leq \tilde \mu_t$ for every $t>0$. 
We also stress that uniqueness of the solution can be restored by 
requiring \eqref{EqFG} to hold for the functions $h_q$ with $q\geq \omega_{_+}+\alpha$;
see \autoref{P3} and \autorefpref{T2}{i:T2:pos}.

\skippar
We now present a different approach to \autoref{T2}.
In the homogeneous case $\alpha=0$, we saw in the preceding section 
that the equation \eqref{Eqop1} bears a close relationship with certain exponential L\'evy processes. 
It turns out that in the self-similar case with $\alpha<0$, the vital connection
is with positive 
self-similar Markov processes, and is made via the Lamperti transformation which associates these
with the class of L\'evy processes. We first provide some background in this area. 

A \define{positive self-similar Markov process} (\define{pssMp}) with
\define{self-similarity index} $\gamma \in \R$ is a standard Markov process
$R = (R_t)_{t\geq 0}$ with associated filtration $\FFt$ and probability laws
$(\ssP_x)_{x \in (0,\infty)}$, on $[0,\infty]$, which has $0$ and $\infty$ as
absorbing states and
which satisfies the \define{scaling property}, that for every $x, c > 0$,
\[
  \label{scaling prop}%
  \text{ the law of } (cR_{t c^{-\alpha}})_{t \ge 0}
  \text{ under } \ssP_x \text{ is } \ssP_{cx} \text{.}
\]
Here, we mean ``standard'' in the sense of \cite{BG-mppt},
which is to say, $\FFt$ is a complete, right-continuous filtration,
and $R$ has c\`adl\`ag paths and is strong Markov
and quasi-left-continuous.

In the seminal paper \cite{Lamp}, Lamperti describes a one-to-one correspondence
between pssMps and (possibly killed) L\'evy processes, which we now outline.
It may be worth noting that we have presented a slightly
different definition of pssMp from Lamperti; for the connection, see
\cite[\S 0]{VA-Ito}.

Let
$S(t) = \int_0^t (R_u)^{-\gamma}\, \dd u .$
This process is continuous and strictly increasing until $R$ reaches zero.
Let $(T(s))_{s \ge 0}$ be its inverse, and define
\[ \eta_s = \log R_{T(s)} \qquad s\geq 0. 
\]
Then $\eta : = (\eta_s)_{s\geq 0}$ is a (possibly killed) L\'evy process started
at position $\log x$, possibly killed at an independent
exponential time; the law of the L\'evy process and the
rate of killing do not depend
on the value of $x$. The real-valued process
$\eta$ with probability laws
$(\LevP_y)_{y \in \R}$ is called the
\define{L\'evy process associated to $R$},
or the \define{Lamperti transform of $R$}.

An equivalent definition of $S$ and $T$, in terms of $\eta$ instead
of $R$, is given by taking
$T(s) = \int_0^s \exp(\gamma \eta_u)\, \dd u$
and $S$ as its inverse. Then,
\begin{equation*}
  \label{Lamp repr}
  R_t = \exp(\eta_{S(t)}) 
\end{equation*}
for all $t\geq 0$, and this shows that the Lamperti transform is a bijection.
A useful fact is that, as a consequence of the definitions we have just given,
it holds that $\dd t = \exp(-\gamma \eta_{S(t)}) \, \dd S(t)$.

Most of the literature on pssMps (including Lamperti's
original paper) assumes that $\gamma>0$, and much of it is also given for $\gamma=1$.
Indeed it is easy to change the index of self-similarity.
If $R$ is a pssMp of index $\gamma$ and corresponding to the
Lévy process $\eta$, then, for any
$\gamma' \in \R$, the process $R^{\gamma'} = (R_t^{\gamma'})_{t\ge 0}$
is a pssMp with index $\gamma/\gamma'$, corresponding to
the Lévy process $\gamma' \eta$. It is also useful to note that the
time-changes appearing in the Lamperti transformation are a.s.\ equal
for $R$ and $R^{\gamma'}$.
We should point out that the case $\gamma=0$ is special, since
in this case the time-change does not have any effect, and the
pssMps of index $0$ are just exponential Lévy processes.

Note that, if the Lévy process process $\eta$ is killed at time $\zeta$, then we define
$R_t = 0$ for $t \ge T(\zeta)$ if $\gamma \ge 0$, and
$R_t = +\infty$ for $t \ge T(\zeta)$ if $\gamma < 0$.

Recall \autoref{L1}, and  define $\Phi_{_+} \coloneqq \Phi_\omega =\kappa(\cdot+\omega_{_+})$,
which is the Laplace exponent of the spectrally negative L\'evy process $\xi_{_+} \coloneqq \xi_{\omega_+}$.
Let us denote by $X_+$ the pssMp with index $-\alpha$ associated to $\xi_+$ by the Lamperti
transformation. Note that, because $\xi_+$ has positive mean, the process $X_+$ never
reaches the absorbing boundaries $0$ or $+\infty$.
We define the measure $\rho_t^{  }$ to be the distribution of $X_{_+}(t)$ under $\ssP_1$, that is,
 the probability measure on $(0,\infty)$ defined by
$$\langle \rho_t^{  }, f \rangle = \ssE_1(f(X_{_+}(t)))\,,\qquad f\in{\mathcal C}_0(0,\infty)$$
and give first the following analogue of \autoref{C0}:

\begin{lem}\label{LanC0}
  The family of probability measures $(\rho^{  }_t)_{t\geq 0}$ 
  depends continuously on the parameter $t$ for the topology of weak convergence.
  Further, for every $g\in {\mathcal C}^{\infty}_c(0,\infty)$, 
  the function $t\mapsto \langle \rho^{  }_t, g\rangle$ is differentiable with derivative 
  $\partial_t\langle \rho^{  }_t, g\rangle= \langle \rho^{  }_t, {\mathcal A}^{(\alpha)}_{_+}g\rangle$,
  where 
  $${\mathcal A}^{(\alpha)}_{_+}g(x) \coloneqq x^{-\omega_{_+}} {\mathcal L}_{\alpha}(h_{\omega_{_+}}g)(x)\,, \qquad x>0.$$
\end{lem}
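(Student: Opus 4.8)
The plan is to transfer the two assertions — weak continuity of $t\mapsto\rho^{ }_t$ and the forward equation for $\langle\rho^{ }_t,g\rangle$ — directly from known properties of the positive self-similar Markov process $X_+$. First I would record that $X_+$ is a Feller process on $(0,\infty)$: since $X_+$ never reaches $0$ or $\infty$ (its Lamperti transform $\xi_+$ has strictly positive mean, as noted just above the statement), the pssMp restricted to $(0,\infty)$ has a Feller semigroup, a fact from the literature on pssMps (e.g.\ \cite{Lamp,VA-Ito}). Weak continuity of $t\mapsto\rho^{ }_t$ is then immediate from the Feller property applied under $\ssP_1$, exactly as in the proof of \autoref{C0}, where the analogous step used the Feller property of L\'evy processes.

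For the differentiability and the form of the derivative, the key is to identify the infinitesimal generator of $X_+$ on test functions and to recognise it as $\mathcal A^{(\alpha)}_{+}$. Here I would use the standard description of the generator of a pssMp via the Lamperti transformation: if $\mathcal G_{\xi_+}$ denotes the generator of the L\'evy process $\xi_+$ (acting on $g(\mathrm e^{\,\cdot})$), then the generator of $X_+$ acts on a suitable function $g$ by $x\mapsto x^{\alpha}\,(\mathcal G_{\xi_+}\text{-part applied to }g)$, reflecting the time change $\dd t=\exp(-\gamma\eta_{S(t)})\,\dd S(t)$ with $\gamma=-\alpha$. Concretely, one checks on power functions: for $q\in\dom\kappa$, $\langle\rho^{ }_t,h_q\rangle=\ssE_1\!\big(X_+(t)^q\big)$, and a moment computation (of exactly the kind carried out in \cite{BY-moments}, or directly from the Lamperti representation together with \autoref{L1}) gives
\[
\partial_t\langle\rho^{ }_t,h_q\rangle=\kappa(q)\,\langle\rho^{ }_t,h_{q+\alpha}\rangle
=\langle\rho^{ }_t,\mathcal L_\alpha h_q\rangle,
\]
using \eqref{Eqvpss} in the last step. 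Rewriting $\mathcal L_\alpha h_q=h_{-\omega_+}\mathcal L_\alpha(h_{\omega_+}h_{q-\omega_+})$ shows this equals $\langle\rho^{ }_t,\mathcal A^{(\alpha)}_{+}h_{q-\omega_+}\rangle$, which is the claimed identity for the special test functions $h_{q-\omega_+}$.

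To pass from power functions to general $g\in\mathcal C_c^\infty(0,\infty)$, I would invoke that $\mathcal C_c^\infty(0,\infty)$ lies in the domain of the generator of $X_+$ — again a transfer from the L\'evy setting, where $\mathcal C_c^\infty(\mathbb R)$ is in the domain of the generator (Theorem 31.5 in \cite{Sato}), composed with the smooth change of variables $x\mapsto\log x$ and the time change, which is harmless on compact subsets of $(0,\infty)$ since the self-similarity factor $x^{\alpha}$ is smooth and bounded there. One then checks that on such $g$ the generator is precisely $\mathcal A^{(\alpha)}_{+}g(x)=x^{-\omega_+}\mathcal L_\alpha(h_{\omega_+}g)(x)$, either by a density argument from the power-function case or by a direct computation from \eqref{Eqop1}; the identity $\partial_t\langle\rho^{ }_t,g\rangle=\langle\rho^{ }_t,\mathcal A^{(\alpha)}_{+}g\rangle$ is then Kolmogorov's forward equation for this Feller process. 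The main obstacle is the bookkeeping in identifying the generator of the pssMp on $\mathcal C_c^\infty(0,\infty)$ and confirming it coincides with $\mathcal A^{(\alpha)}_{+}$ — in particular verifying that the extra growth term $xf'(x)(1-y)$ in \eqref{Eqop1} is exactly what the conjugation by $h_{\omega_+}$ and the Lamperti time change produce; everything else is a routine adaptation of the argument already given for \autoref{C0}.
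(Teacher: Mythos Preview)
Your overall strategy matches the paper's: weak continuity via the Feller property of the pssMp (the paper cites \cite{Lamp} directly), and the forward equation via identification of the generator of $X_+$ on $\mathcal C_c^\infty(0,\infty)$. But your execution of the second part has a real gap.

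The paper does \emph{not} proceed by first checking the identity on power functions and then extending. Instead it starts from Dynkin's formula for the exponential L\'evy process $\exp(\xi_+(\cdot))$, whose generator on $\mathcal C_c^\infty(0,\infty)$ is $\mathcal A_+g(x)=x^{-\omega_+}\mathcal L_0(h_{\omega_+}g)(x)$ by the computation already done in \autoref{C0}. It then performs the Lamperti time change explicitly, obtaining
\[
g(X_+(t))-\int_0^t h_\alpha(X_+(s))\,\mathcal A_+g(X_+(s))\,\dd s,
\]
which is \emph{a priori} only a local martingale. The key step --- and this is what your phrase ``the time change is harmless on compact subsets'' skips over --- is to upgrade this to a true martingale. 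The paper does so by observing that $\mathcal A^{(\alpha)}_+g=h_\alpha\mathcal A_+g$ is \emph{bounded} on $(0,\infty)$: it vanishes near $0$ and is $o(x^{q+\alpha-\omega_+})$ at infinity by \autorefpref{Lbounds}{i:Lbounds:asym}, and since $\alpha<0$ one can take $q=\omega_+$ to get decay. This boundedness is precisely where the sign of $\alpha$ enters, and it is the analytic content of the lemma; without it you cannot take expectations and conclude. (Compare \autoref{LanC0-} for $\alpha>0$, where this fails and a more delicate integrability argument is needed.)

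Your alternative route via power functions does not close the gap either: the identity $\partial_t\langle\rho_t,h_q\rangle=\kappa(q)\langle\rho_t,h_{q+\alpha}\rangle$ is not something you can read off from \cite{BY-moments} in differential form without essentially redoing the martingale argument above, and in any case power functions are not in $\mathcal C_0(0,\infty)$, so a ``density'' passage from them to $\mathcal C_c^\infty$ in the generator topology is not available. The clean way through is the paper's: Dynkin at the L\'evy level, explicit time change, then the boundedness observation using $\alpha<0$.
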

\begin{proof}
  The first assertion follows easily from the Feller property of self-similar Markov processes; 
  see Theorem 2.1 in Lamperti \cite{Lamp} and the remark on page 212.
  In order to establish the second, we work with the L\'evy process $\xi_{_+}$
  The exponential L\'evy process $\exp(\xi_{_+}(\cdot))$ is a Feller process in $(0,\infty)$, 
  and the same calculation as in the proof of \autoref{C0} shows that its infinitesimal generator
  ${\mathcal A}_{_+}$ is given by 
  $${\mathcal A}_{_+}g(x) 
  = x^{-\omega_{_+}} {\mathcal L}_{0}(h_{\omega_{_+}}g)(x)\,,\qquad g\in{\mathcal C}^{\infty}_c(0,\infty).$$

  According to Dynkin's formula (see, e.g., Proposition 4.1.7 of \cite{EK-mp}),
  for every 
  \linebreak % (!) prevent math being split (!)
  $g\in{\mathcal C}^{\infty}_c(0,\infty)$, the process
  $$g(\exp(\xi_{_+}(t)))- \int_0^t {\mathcal A}_{_+}g(\exp(\xi_{_+}(s))) \d s$$
  is a martingale. Recall that by definition, $X_{_+}$ arises as the transform of $\exp(\xi_{_+}(\cdot))$
  by the time substitution ${S}$, which is given as the inverse of  the additive functional
  $\int_0^t h^{-1}_{\alpha}\left( \exp(\xi_{_+}(s)\right)\d s $, and we have the identity 
  $$
    g(X_{_+}(t))- \int_0^{{S}(t)} {\mathcal A}_{_+}g(\exp(\xi_{_+}(s))) \d s
    = g(X_{_+}(t))- \int_0^{t} h_{\alpha}(X_{_+}(s)) {\mathcal A}_{_+}g(X_{_+}(s)) \d s.
  $$
  A priori, the time-substitution above changes a martingale into a local martingale. 
  However, using \autorefpref{Lbounds}{i:Lbounds:asym} and the fact that $\alpha <0$, we see that ${\mathcal A}^{(\alpha)}_{_+}g\coloneqq h_{\alpha} {\mathcal A}_{_+}g$ is bounded, and it follows that the process 
  $$g(X_{_+}(t))- \int_0^{t}  {\mathcal A}^{(\alpha)}_{_+}g(X_{_+}(s)) \d s$$
  is a true martingale. Taking expectations, we arrive at 
  $$\langle \rho^{  }_t, g\rangle - g(1)=\int_0^t \langle \rho^{  }_s,  {\mathcal A}^{(\alpha)}_{_+}g\rangle\d s,$$
  and our claim follows.
\end{proof}

The connection with solutions of the growth-fragmentation equation is the following:
\begin{cor}\label{c:mu-alpha-neg}
  Let
  \[ \tilde \mu^{ }_t = h_{-\omega_{_+}}\rho_t^{  }, \qquad t\geq 0. \]
  Then, $(\tilde \mu^{ }_t)_{t\geq 0}$ is 
  equal to the solution $(\mu^{ }_t)_{t \ge 0}$ of the growth-fragmentation equation appearing in \autorefpref{T2}{i:T2:pos}.
\end{cor}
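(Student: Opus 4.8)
The plan is to identify $\tilde \mu^{ }_t$ with $\mu^{ }_t$ by checking that the two measures assign the same values to the moments $\langle\,\cdot\,,h_{\omega_{_+}-k\alpha}\rangle$, $k\in\N$, and then to invoke the uniqueness assertion in \autorefpref{T2}{i:T2:pos}, according to which a measure on $(0,\infty)$ with $\langle\,\cdot\,,h_{\omega_{_+}}\rangle=1$ is determined by the prescribed values of those moments. By construction $\tilde\mu^{ }_t = h_{-\omega_{_+}}\rho^{  }_t$ with $\rho^{  }_t$ the law of $\Xp(t)$ under $\ssP_1$, so
\[
  \langle \tilde\mu^{ }_t, h_{\omega_{_+}-k\alpha}\rangle = \langle \rho^{  }_t, h_{-k\alpha}\rangle = \ssE_1\bigl(\Xp(t)^{-k\alpha}\bigr),
\]
and $\langle\tilde\mu^{ }_t,h_{\omega_{_+}}\rangle=\langle\rho^{  }_t,h_0\rangle=1$ since $\rho^{  }_t$ is a probability measure; thus it suffices to show that, for every integer $k\geq 0$, the quantity $m_k(t)\coloneqq\langle \rho^{  }_t, h_{-k\alpha}\rangle$ is finite and equal to the polynomial in $t$ displayed in \autorefpref{T2}{i:T2:pos}.

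There are two natural routes. Since $\Xp$ is the pssMp of index $-\alpha$ whose Lamperti-associated L\'evy process $\xip$ has Laplace exponent $\Phi_{_+}=\kappa(\cdot+\omega_{_+})$ with $\Phi_{_+}'(0)=\kappa'(\omega_{_+})>0$, one may simply quote Proposition 1 of \cite{BY-moments} — exactly as in the proof of \autoref{T2} — which gives both the finiteness of $m_k(t)$ and the identity $m_k(t)=1+\sum_{\ell=1}^k\Phi_{_+}(-\alpha k)\cdots\Phi_{_+}(-\alpha(k-\ell+1))\,t^{\ell}/\ell!$, after noting $\Phi_{_+}(-\alpha j)=\kappa(\omega_{_+}-\alpha j)$; in fact this shows that $\rho^{  }_t$ is the very probability measure used in the proof of \autoref{T2} to define $\mu^{ }_t$, so that $\tilde\mu^{ }_t=\mu^{ }_t$ at once. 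Alternatively, in the spirit of \autoref{LanC0}, one derives a differential recursion for the $m_k$: since $\alpha<0$ we have $\omega_{_+}-k\alpha\in[\omega_{_+},\infty)\subseteq\dom\kappa$ for every $k\geq 0$, so \eqref{Eqvpss} yields the elementary identity ${\mathcal A}^{(\alpha)}_{_+}h_{-k\alpha}=\kappa(\omega_{_+}-k\alpha)\,h_{-(k-1)\alpha}$; feeding $g=h_{-k\alpha}$ into the forward equation of \autoref{LanC0} then gives $m_0\equiv 1$, $m_k(0)=\langle\delta_1,h_{-k\alpha}\rangle=1$, and $\partial_t m_k(t)=\kappa(\omega_{_+}-k\alpha)\,m_{k-1}(t)$ for $k\geq 1$, and solving by induction on $k$ reproduces precisely the polynomial of \autorefpref{T2}{i:T2:pos}.

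The one point requiring care in the second route — and hence the main obstacle, should one not wish to appeal to \cite{BY-moments} for the moments themselves — is that \autoref{LanC0} is stated only for test functions $g\in{\mathcal C}^{\infty}_c(0,\infty)$, whereas the power functions $h_{-k\alpha}$ are unbounded. One must therefore approximate $h_{-k\alpha}$ by functions $g_n\in{\mathcal C}^{\infty}_c(0,\infty)$ with $0\le g_n\uparrow h_{-k\alpha}$ and $g_n=h_{-k\alpha}$ on $[1/n,n]$, apply the integrated form of the forward equation to each $g_n$, and pass to the limit: the left-hand side converges to $m_k(t)$ by monotone convergence once $m_k(t)<\infty$ is known, while the right-hand side requires dominated convergence in the time variable together with a bound, uniform on $[0,t]$, on the truncation error in $\langle \rho^{  }_s,{\mathcal A}^{(\alpha)}_{_+}g_n\rangle$, controlled via the polynomial growth of ${\mathcal A}^{(\alpha)}_{_+}h_{-k\alpha}$ at infinity (a consequence of \autorefpref{Lbounds}{i:Lbounds:asym}) and the finiteness of $m_{k-1}(s)$. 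These moment bounds are again supplied by \cite{BY-moments}. Everything else — the identity for ${\mathcal A}^{(\alpha)}_{_+}h_{-k\alpha}$, the solution of the recursion, and the final appeal to the uniqueness in \autoref{T2} — is routine.
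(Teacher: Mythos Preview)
Your proposal is correct, and your Route 1 is precisely the identification step the paper uses: quoting Proposition~1 of \cite{BY-moments} to recognise $\rho^{  }_t$ as the very probability measure constructed in the proof of \autorefpref{T2}{i:T2:pos}, whence $\tilde\mu^{ }_t=\mu^{ }_t$ immediately.

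The only difference is one of economy. Rather than computing moments, the paper first applies \autoref{LanC0} with $g=h_{-\omega_{_+}}f$ for $f\in{\mathcal C}^{\infty}_c(0,\infty)$ (so that $g\in{\mathcal C}^{\infty}_c(0,\infty)$ too), obtaining directly
\[
  \partial_t\langle \tilde\mu^{ }_t, f\rangle
  = \partial_t\langle \rho^{  }_t, h_{-\omega_{_+}}f\rangle
  = \langle \rho^{  }_t, h_{-\omega_{_+}}{\mathcal L}_{\alpha}f\rangle
  = \langle \tilde\mu^{ }_t, {\mathcal L}_{\alpha}f\rangle,
\]
which shows $(\tilde\mu^{ }_t)$ solves \eqref{EqFG} without any approximation of unbounded test functions; the identification with $\mu^{ }_t$ is then left to \cite{BY-moments}, exactly as in your Route~1. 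This sidesteps entirely the approximation issue you flag in Route~2, since the forward equation is only ever invoked on ${\mathcal C}^{\infty}_c$ functions.
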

\begin{proof}
  We deduce immediately from \autoref{LanC0} that for every $f\in{\mathcal C}^{\infty}_c$
  $$\partial_t\langle \tilde\mu^{ }_t, f\rangle
    = \partial_t\langle \rho^{  }_t, h^{-1}_{\omega_{_+}}f\rangle
    = \langle \rho^{  }_t, h^{-1}_{\omega_{_+}}{\mathcal L}_{\alpha}f\rangle
    =  \langle \tilde\mu^{}_t, {\mathcal L}_{\alpha}f\rangle\,,$$
  that is, the family $\bigl(\tilde\mu^{ }_t\bigr)_{t\geq 0}$  solves 
  \eqref{EqFG} with ${\mathcal L}={\mathcal L}_{\alpha}$. 
  That $\mu^{ }_t$ coincides with the measure appearing in \autorefpref{T2}{i:T2:pos},
  and that the notation 
  $\rho_t^{  }$ for the distribution of $X_{_+}(t)$ is consistent with that used in the proof of \autorefpref{T2}{i:T2:pos},
  follows from Proposition 1 of \cite{BY-moments}.
\end{proof}

This approach could also be adapted to prove the existence of $(\gamma_t^{  })_{t\geq 0}$
using the process $X_+(t)$ started from zero, and indeed, this will be our method for
the case $\alpha>0$ in \autoref{s:alpha-pos}.

\skippar
We conclude the section by offering some results on the asymptotic behaviour of the solution
$(\mu_t)_{t\ge 0}$ given by the previous theorem.
  
Our first result in this direction indicates that, thanks to the self-similarity property \eqref{EqSSL} of
the equation \eqref{EqFG}, the solution starting
from zero given above can
be used to describe the asymptotic behaviour of $\mu^{ }_t$ as $t\to\infty$.

\begin{prop}\label{p:asymptotic-negative}
  For any $f \in \Cb$,
  \[ \int f(t^{-1/\abs{\alpha}} z) z^{\omega_+} \mu_t^{  }(\dd z) \to \int f(z) z^{\omega_+} \gamma_1^{  }(\dd z) . \]
\end{prop}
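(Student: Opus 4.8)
The plan is to exploit the explicit moment formulas from \autoref{T2} together with the self-similarity relation \eqref{EqSSL}, reducing the statement to the fact that the measures $\gamma_t^{  }$ and $\mu_t^{  }$ are both determined by their moments along the arithmetic progression $\omega_+ - k\alpha$, $k \in \N$. First I would record the precise scaling behaviour: fix $f \in \Cb$ and write $t^{-1/\abs\alpha} = t^{1/\alpha}$ (since $\alpha<0$). Applying the dilation $\varphi_{t^{1/\alpha}}$ to $\mu_t^{  }$ and using the self-similarity of the equation as in \eqref{EqSSL}, one sees that the image measure $\nu_t \coloneqq \varphi_{t^{1/\alpha}}(h_{\omega_+}\mu_t^{  })$ — equivalently the measure appearing on the left-hand side, tested against $f$ — has moments that can be read off from \autorefpref{T2}{i:T2:pos}. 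Concretely, for each integer $k \ge 0$,
\begin{equation*}
  \int z^{-k\alpha} \, \bigl(t^{-1/\abs\alpha} z\text{-pushforward of } h_{\omega_+}\mu_t^{  }\bigr)(\dd z)
  = t^{-k} \langle \mu_t^{  }, h_{\omega_+ - k\alpha}\rangle .
\end{equation*}

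Next I would compute this quantity explicitly. From \autorefpref{T2}{i:T2:pos}, $\langle \mu_t^{  }, h_{\omega_+ - k\alpha}\rangle = 1 + \sum_{\ell=1}^k \frac{\kappa(\omega_+ - \alpha k)\cdots\kappa(\omega_+ - \alpha(k-\ell+1))}{\ell!}\, t^\ell$, which is a polynomial in $t$ of degree $k$ with leading coefficient $\frac{\kappa(\omega_+-\alpha)\cdots\kappa(\omega_+-\alpha k)}{k!}$. Hence $t^{-k}\langle \mu_t^{  }, h_{\omega_+-k\alpha}\rangle \to \frac{\kappa(\omega_+-\alpha)\cdots\kappa(\omega_+-\alpha k)}{k!}$ as $t\to\infty$, and by \autorefpref{T2}{i:T2:zero} this limit is exactly $\langle \gamma_1^{  }, h_{\omega_+ - k\alpha}\rangle$. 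So the scaled, weighted measures converge moment-by-moment (along the arithmetic sequence) to $h_{\omega_+}\gamma_1^{  }$; in particular the total masses ($k=0$) converge to $1$.

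The main obstacle is upgrading this moment convergence to weak convergence tested against an arbitrary $f \in \Cb$. The key point is that the limiting measure $h_{\omega_+}\gamma_1^{  }$ is a probability measure determined by its moments along $\{\omega_+ - k\alpha : k\ge 0\}$ — this is precisely the content of the uniqueness clause in \autorefpref{T2}{i:T2:zero}, which rests on Proposition 1 of \cite{BY-moments}. I would argue as follows: the scaled measures $\varphi_{t^{1/\alpha}}(h_{\omega_+}\mu_t^{  })$ form a family of probability measures (total mass $1$, by the $k=0$ computation, for each $t$) whose moments converge; since moment $k=1$ (i.e. the $\omega_+ - \alpha$ moment, which is $> $ the mass moment) stays bounded, the family is tight, so every subsequential weak limit exists and has the prescribed moments, hence equals $h_{\omega_+}\gamma_1^{  }$ by the determinacy; therefore the whole family converges weakly. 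Finally, weak convergence of these probability measures, combined with uniform integrability supplied by the bounded $(\omega_+-\alpha)$-moments, gives convergence of $\int f(z)\,(\cdot)(\dd z)$ for every bounded continuous $f$, which is the assertion. One should be slightly careful that $\Cb = \mathcal{C}_b(0,\infty)$ and that no mass escapes to $0$ or $\infty$; tightness on $(0,\infty)$ can be checked using both a positive and a negative power moment (e.g. $h_{\omega_+ - \alpha}$ controls large $z$, and since $\omega_+ \ge 0$ one also controls small $z$ via the probabilistic representation of $\rho_t^{  }$ and $\pi_t^{  }$ as laws of exponentials of Lévy processes, whose negative exponential moments are handled via the Esscher/Lamperti machinery of \autoref{L1} and \autoref{LanC0}).
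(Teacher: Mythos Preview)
Your approach is correct but genuinely different from the paper's. The paper proceeds probabilistically: it recalls that $z^{\omega_+}\mu_t^{  }(\dd z)=\ssP_1(X_+(t)\in\dd z)$ and $z^{\omega_+}\gamma_1^{  }(\dd z)=\ssP_0(X_+(1)\in\dd z)$, invokes the result of \cite{BY-entrance} that $X_+$ extends to a Feller process on $[0,\infty)$ (since $\kappa'(\omega_+)>0$), and then uses scaling to write $\ssE_1[f(t^{1/\alpha}X_+(t))]=\ssE_{t^{1/\alpha}}[f(X_+(1))]\to\ssE_0[f(X_+(1))]$ directly from Feller continuity at the entrance point. Your argument instead stays at the level of moments: you read off from \autorefpref{T2}{i:T2:pos} that $t^{-k}\langle\mu_t^{  },h_{\omega_+-k\alpha}\rangle$ is a polynomial in $t^{-1}$ with constant term $\langle\gamma_1^{  },h_{\omega_+-k\alpha}\rangle$, deduce convergence of all moments along the arithmetic progression, and then invoke the moment determinacy underlying \autorefpref{T2}{i:T2:zero} (i.e.\ Proposition~1 of \cite{BY-moments}) to upgrade to weak convergence. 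The paper's route is shorter and avoids any tightness discussion, at the cost of importing the entrance-law machinery; your route is more self-contained given \autoref{T2}, but the final step needs more care than you give it.

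Specifically, your treatment of mass escaping to $0$ is not right as written: for $\alpha<0$ the measure $\rho_t^{  }$ is the law of the pssMp $X_+(t)$, not of an exponential L\'evy process, so the ``Esscher/Lamperti machinery of \autoref{L1} and \autoref{LanC0}'' does not directly supply negative moments. The clean fix is to argue on $[0,\infty)$: the scaled probabilities are tight there (bounded $(-\alpha)$-moment), every subsequential limit $\nu$ has all moments $\int z^{-k\alpha}\nu(\dd z)$ equal to those of $\pi_1^{  }$ (uniform integrability from the next moment), and the Stieltjes moment problem for $\pi_1^{  }$ is determinate on $[0,\infty)$ by \cite{BY-moments}; hence $\nu=\pi_1^{  }$, which places no mass at $0$, and convergence against $f\in\Cb$ follows.
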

  \begin{proof}
   
    Since $\kappa'(\omega_+) >0$, it is possible to extend 
    the definition of $\Xp$ in order to allow it to start
    from $\Xp(0)=0$, such that it is a Feller process on the state space $[0,\infty)$;
    this is a consequence of \cite[Theorem 1]{BY-entrance}.
    For $x \ge 0$, we will denote by $\ssP_x$ the law of the process with $\Xp(0) = x$.

    It then follows from the scaling property that $\ssE_x[ f(t^{1/\alpha} \Xp(t))] = \ssE_{x t^{1/\alpha}}[f(\Xp(1))]$,
    and then the convergence
    \[ \ssE_x[f(t^{1/\alpha} \Xp(t)) ] \to \ssE_0[f(\Xp(1))], \qquad t \to \infty, \]
    follows from the scaling property of $\Xp$.
     
    Finally, we know from the reference \cite{BY-moments}, which we used in 
    the proof of \autoref{T2}, that
    the measures $x^{\omega_+}\mu_t^{  }(\dd x)$ and $x^{\omega_+} \gamma_t^{  }(\dd x)$ are,
    respectively, equal to $\ssP_1(X_+ (t) \in \dd x)$ and $\ssP_0(X_+(t) \in \dd x)$.
    Our claim follows immediately.
  \end{proof}
  
We remark that the statement of the proposition can easily be extended to solutions
based on $(\mu_t^{  })$ whose initial value is a measure with compact support
in $(0,\infty)$.

\skippar
Suppose now that the equation $\kappa(q) = 0$ has two solutions,
$\omega_{_-}$ and $\omega_+$, with $\omega_{_-}<\omega_+$. Then we can say a little more.
Let $X_-$ be the $(-\alpha)$-pssMp associated with the Lévy process
$\xi_- \coloneqq (\xi_-(t))_{t\ge 0}$
having Laplace exponent $\Phi_- \coloneqq \Phi_{\omega_{_-}}$. 
Recall that we say the Lévy process $\xi_-$ is \emph{lattice} if,
for some $r \in \R$, the support 
of $\xi_-(1)$ a.s.\ lies in $r\ZZ$;
otherwise, we say that $\xi_-$ is \emph{non-lattice}.
If we define
the random variable
\[ I = \int_0^\infty \e^{\abs{\alpha}\xi_-(t)}\, \dd t, \]
then it is known from \cite[Lemma 4]{Riv-re1} that, so long as
$\xi_-$ is non-lattice,
\[ \lim_{t\to \infty} t^{(\omega_+-\omega_{_-})/\abs{\alpha}} \P_0(I>t) = C , \]
for some $0 < C < \infty$.
We obtain from \citet{HR-Yaglom} the following result.
\begin{prop}
  Let $f \in \mathcal{C}_0(0,\infty)$ and assume that $\xi_-$ is non-lattice. Then,
  \[ \dfrac{\dint x^{\omega_{_-}} f(t^{-1}x^{\abs{\alpha}}) \, \mu_t^{  }(\dd x)}{\P_0(I>t)}
    \to \int f(x) \, \nu(\dd x), \qquad t \to \infty,
  \]
  where $\nu$ is the distribution of the random variable
  $J_{(\omega_+-\omega_{_-})/\abs{\alpha}}$ in equation (13) of \cite{HR-Yaglom}.
\end{prop}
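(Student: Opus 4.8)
The plan is to transfer the Yaglom-type limit theorem for positive self-similar Markov processes absorbed at zero, as established in \cite{HR-Yaglom}, to the measures $\mu_t^{  }$ via the representation already obtained in \autorefpref{T2}{i:T2:pos} and \autoref{c:mu-alpha-neg}. Recall that $x^{\omega_{_-}} \mu_t^{  }(\dd x)$ should be understood through the process $X_-$: the Doob-type $h$-transform relating the Lévy processes $\xi_+$ and $\xi_-$ via the Esscher transform by $\omega_+ - \omega_{_-}$ translates, at the level of the pssMps $X_+$ and $X_-$, into the fact that $X_-$ is $X_+$ conditioned (in an appropriate sense) to be absorbed at $0$. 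Concretely, the first step is to express, for $f \in \mathcal{C}_0(0,\infty)$, the quantity $\int x^{\omega_{_-}} f(t^{-1} x^{\abs{\alpha}})\, \mu_t^{  }(\dd x)$ in terms of an expectation for $X_-$ started from $1$; here the change of variable $x \mapsto t^{-1}x^{\abs{\alpha}}$ is chosen precisely so that, under the Lamperti time-change, the exponential functional $I = \int_0^\infty \e^{\abs{\alpha}\xi_-(t)}\,\dd t$ appears, since $X_-$ reaches $0$ at the (finite) time corresponding to $I$ under $\ssP_1$.

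Next I would invoke the main result of \cite{HR-Yaglom}: for a pssMp of index $-\alpha < 0$ absorbed at $0$ after an a.s.\ finite lifetime, whose associated Lévy process $\xi_-$ is non-lattice and drifts to $-\infty$ (which holds here because $\kappa'(\omega_{_-}) < 0$ by convexity of $\kappa$ and $\omega_{_-} < \omega_+$), there is a Yaglom limit: conditionally on survival up to time $t$, a suitably rescaled position converges in distribution, and moreover the unconditioned ``mass'' $\ssP_1(X_-(t) > 0)$ (equivalently $\P_0(I > t)$, by the Lamperti correspondence) decays polynomially with the exponent $(\omega_+-\omega_{_-})/\abs{\alpha}$, as recorded in the displayed tail asymptotics quoted from \cite[Lemma 4]{Riv-re1}. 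The limiting law $\nu$ is identified in \cite{HR-Yaglom} as the distribution of $J_{(\omega_+-\omega_{_-})/\abs{\alpha}}$, a functional of $\xi_-$ appearing in equation (13) of that paper. Dividing by $\P_0(I>t)$ and passing to the limit then yields exactly the claimed convergence.

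The one genuine point requiring care is the bookkeeping that identifies the index, the scaling, and the role of $\omega_{_-}$ versus $\omega_+$ correctly: the measure in the numerator is built from $\mu_t^{  }$, which lives naturally on the $\xi_+$ side (since $x^{\omega_+}\mu_t^{  }(\dd x) = \ssP_1(X_+(t) \in \dd x)$), yet the statement weights by $x^{\omega_{_-}}$, so one is really looking at $x^{\omega_{_-} - \omega_+}\cdot \ssP_1(X_+(t)\in\dd x)$, and the Esscher/$h$-transform relating $\xi_+$ to $\xi_-$ is precisely what converts this into an expectation governed by $X_-$. Making this identification clean — verifying that $\kappa(\omega_{_-} + q) - \kappa(\omega_{_-})$ is the Laplace exponent $\Phi_-$, that the relevant $h$-transform of $\exp(\xi_+(\cdot))$ by $x^{\omega_{_-}-\omega_+}$ reproduces $\exp(\xi_-(\cdot))$ up to the martingale normalisation, and that this survives the Lamperti time-change (the time-changes for $X_+$ and $X_-$ agreeing a.s.\ on the relevant events as noted in the discussion of changing the self-similarity index) — is the main obstacle; everything after that is a direct citation of \cite{HR-Yaglom}, with the tail rate supplied by \cite{Riv-re1}. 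Once the translation is in place, weak convergence against $f \in \mathcal{C}_0(0,\infty)$ follows since the Yaglom limit in \cite{HR-Yaglom} is a convergence in distribution and the normalising sequence is exactly $\P_0(I>t)$.
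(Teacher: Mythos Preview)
Your proposal is correct and follows essentially the same route as the paper: identify $x^{\omega_+}\mu_t(\dd x)$ with $\ssP_1(X_+(t)\in\dd x)$, use the Esscher transform relating $\Phi_+$ and $\Phi_-$ to rewrite $x^{\omega_{_-}}\mu_t(\dd x) = x^{\omega_{_-}-\omega_+}\ssP_1(X_+(t)\in\dd x) = \ssP_1(X_-(t)\in\dd x)$, and then apply Theorem~1.6 of \cite{HR-Yaglom} directly to the pssMp $X_-$.
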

  \begin{proof}
    As remarked in the proof of \autoref{p:asymptotic-negative}, the equation
    \[ x^{\omega_+}\mu_t^{  }(\dd x) = \ssP_1(X_+(t) \in \dd x) \] holds as an identity of
    probability measures, where $X_+$ is the $(-\alpha)$-pssMp corresponding to
    the Lévy process $\xi_+$ with Laplace exponent $\Phi_+ = \Phi_{\omega_+}$.
    We now wish to use the `Esscher transform' for pssMps, which is essentially
    obtained by standard arguments from the Esscher transform of Lévy processes
    given in \cite[Theorem 3.9]{Kypr2} (see, for instance, the discussion around
    \cite[Theorem 14]{CR-cond} for an application in the context of pssMps.)
    This allows
    us to perform a change of measure to switch from the
    process $X_+$, related to the Laplace exponent $\Phi_+$,
    to the process $X_-$, related to the Laplace exponent $\Phi_- = \Phi_+(\cdot + \omega_{_-} - \omega_+)$.
%     \cite[Theorem 14]{CR-cond} -- some considerations for pssMps
    Specifically, we have:
    \[ x^{\omega_{_-}}\mu_t^{  }(\dd x) 
      = x^{\omega_{_-}-\omega_+}x^{\omega_+}\mu_t^{  }(\dd x)
      = x^{\omega_{_-}-\omega_+} \ssP_1(X_+(t) \in \dd x)
      = \ssP_1(X_-(t) \in \dd x),
    \]
    for $x > 0$.
    The process
    $\xi_-$ (which corresponds to $X_-$) is a Lévy process with non-monotone paths
    and which satisfies the conditions of \citet[Theorem~1.6]{HR-Yaglom}. Applying
    this theorem gives the result.
  \end{proof}

\subsection{The case \texorpdfstring{$\alpha >0$}{alpha > 0}}
\label{s:alpha-pos}

In the case $\alpha>0$, the equation \eqref{EqMelss} for the Mellin transform is unfortunately
much less useful, for the following reasons. 
Firstly, the analogue of \autoref{P3} would require to assume that 
$\langle \mu_t,h_q\rangle <\infty$ for all $q$ sufficiently negative.
Roughly speaking, this would force the scarcity of small particles, 
and this phenomenon only occurs for a very restricted class of dislocation measures $K$
(informally, dislocations should not generate too many small particles, and in particular
the total intensity of dislocations must be finite).
Secondly, even if one were able to get an expression for the (negative) moments 
$\langle \mu^{ }_t, h_{\omega_{_+}-k\alpha}\rangle$ with $k\in\N$, 
this moment problem would be in general indeterminate, and the arguments used
in the preceding section would thus collapse.

Nonetheless, we have just seen from \autoref{LanC0} that, for $\alpha<0$,
self-similar growth-fragmentation equations have a close connection
with certain self-similar Markov processes, and using the intuition that we gained,
we are able to offer a very similar set of results when $\alpha > 0$. 
Recall that $\kappa\from [0,\infty)\to (-\infty,\infty]$ is a convex function, 
and that, since we are assuming \eqref{Eqnegspeed},
we may pick $\omega>0$ such that $\mathtt{k}\coloneqq -\kappa(\omega)>0$. 
Then, the function 
$$\Phi_{\dagger}(q)\coloneqq \Phi_{\omega}(q) + \mathtt{k}= \kappa(\omega+q) \,,\qquad q\geq 0,$$
is the Laplace exponent of a spectrally negative L\'evy process killed at an independent exponential
time with parameter $\mathtt{k}$, say 
$\xi_{\dagger}$, and we denote by $X_{\dagger}$ the $(-\alpha)$-pssMp
associated with $\xi_\dagger$ via the Lamperti transformation. 
Note that $X_{\dagger}$ hits the absorbing state $+\infty$ by a jump.
 
% Just as in the preceding section, we associate to $\xi_{_-}$
% a self-similar Markov process $X_{_-}=(X_{_-}(t), t\geq 0)$ 
% with index $-\alpha$,
% using the Lamperti transformation described in \autoref{s:pssMp}.
% % namely 
% % $$X_{_-}(t)=\exp(\xi_{_-}(\tau_{_-}(t)))\,,\qquad \tau_{_-}(t)=\inf\left\{s\geq 0: \int_0^s\exp(-\alpha \xi_{_-}(r))\d r >t\right\}.$$
% % We stress that,  now 
% % $$\lim_{t\to \infty}t^{-1}\xi_{_-}(t)=\E(\xi_{_-}(1))=\kappa'(\omega_{_-}) < 0,$$
% % and since $\alpha >0$,  the time-change $\tau_{_-}(t)$ is finite a.s. 
% [xx $\tau_0$ finite]
We write $\rho_t^{  }$
for the sub-probability measure on $(0,\infty)$ induced by the distribution of $X_{\dagger}(t)$
and study its properties in the following result, which
mirrors \autoref{LanC0}.

\begin{lem}\relax\label{LanC0-}
  Suppose \eqref{Eqnegspeed} holds and $\alpha >0$. Then we have, in the notation above:
  \begin{enumerate}[(i)]
    \item\relax\label{i:LanC0-:sup}
      $\E( \sup_{t \ge 0} X_{\dagger}(t)^q) <\infty$
      for all $0\leq q <\omega_{_+}-\omega$. 
  \item\relax\label{i:LanC0-:ef}
      $\E\biggl[ \displaystyle\int_0^\infty X_{\dagger}(u)^p \, \dd u \biggr] < \infty$
      for all
      $0 < p -\alpha < \omega_{_+} - \omega$.
  \item\relax\label{i:LanC0-:sol}
    The family  $(\rho^{  }_t)_{t\geq 0}$ depends continuously on the parameter $t$ for the topology
    of weak convergence.
%     If further $\alpha  <\omega_{_+}-q$,
%     for some $q$ such that $\kappa(q)<\infty$, then
    For every
    $g\in {\mathcal C}^{\infty}_c(0,\infty)$, the function $t\mapsto \langle \rho^{  }_t, g\rangle$
    is differentiable with derivative 
    $\partial_t\langle \rho^{  }_t, g\rangle= \langle \rho^{  }_t, {\mathcal A}^{(\alpha)}_{\dagger}g\rangle$,
    where 
    \[ {\mathcal A}^{(\alpha)}_{\dagger}g(x)
      \coloneqq x^{-\omega} {\mathcal L}_{\alpha}(h_{\omega}g)(x)\,, \qquad x>0.
    \]
  \item
    $(\rho^{  }_t)_{t\ge 0}$ solves the above equation also for $g(x) = x^q$ with $0 < q < \omega_{_+} - \omega$.
\end{enumerate}
\end{lem}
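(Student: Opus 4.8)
The plan is to prove the four assertions essentially in the order given: (i) and (ii) by direct computations involving the Lévy process $\xi_\omega$ and the Lamperti representation of $X_\dagger$, and then (iii)--(iv) by copying the argument of \autoref{C0} and \autoref{LanC0}. The one genuinely new point is that, because $\alpha>0$, the operator $\mathcal A^{(\alpha)}_\dagger$ is no longer bounded (unlike in \autoref{LanC0}), so the moment estimates (i)--(ii) are exactly what is needed to control the relevant local martingales; this is also the step I expect to be the main obstacle.

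For (i), recall that $X_\dagger$ is the Lamperti transform of $\xi_\dagger$, i.e. of $\xi_\omega$ killed at an independent time $\zeta\sim\mathrm{Exp}(\mathtt k)$, and that $X_\dagger$ is sent to $+\infty$ at the (a.s.\ finite) absorption time $T(\zeta)=\int_0^\zeta\e^{-\alpha\xi_\omega(r)}\,\dd r$. Before absorption, $X_\dagger(t)=\exp(\xi_\omega(S(t)))$ with $S(t)\uparrow\zeta$ as $t$ increases to $T(\zeta)$, so that $\sup_{t\ge 0}X_\dagger(t)=\exp(\overline\xi_\omega(\zeta))$, where $\overline\xi_\omega$ denotes the running supremum. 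Since $\xi_\omega$ is spectrally negative with Laplace exponent $\Phi_\omega$, its supremum at the independent exponential time $\zeta$ is exponentially distributed with parameter equal to the right-inverse $\Phi_\omega^{-1}(\mathtt k)$ (a classical fluctuation fact, see \cite[Chapter~8]{Kypr2}); and $\Phi_\omega(\beta)=\mathtt k$ is equivalent to $\kappa(\omega+\beta)=0$, whose only root with $\beta>0$ is $\beta=\omega_+-\omega$ (the other root giving a negative value, as $\kappa(\omega)<0$). Hence $\E[\exp(q\overline\xi_\omega(\zeta))]<\infty$ precisely for $0\le q<\omega_+-\omega$, which is (i).

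For (ii), the Lamperti change of variable $u=T(s)$, with inverse $S$, gives $\dd u=\e^{-\alpha\xi_\omega(s)}\,\dd s$ and $X_\dagger(T(s))=\e^{\xi_\omega(s)}$, so that, interpreting the integral over the lifetime of $X_\dagger$ in $(0,\infty)$,
\[
  \int_0^\infty X_\dagger(u)^p\,\dd u=\int_0^\zeta\e^{(p-\alpha)\xi_\omega(s)}\,\dd s .
\]
Taking expectations, and using the independence of $\zeta$ and $\xi_\omega$ together with Fubini, the right-hand side equals $\int_0^\infty\e^{-\mathtt k s}\,\E[\e^{(p-\alpha)\xi_\omega(s)}]\,\dd s=\int_0^\infty\e^{s(\Phi_\omega(p-\alpha)-\mathtt k)}\,\dd s$, which is finite exactly when $\Phi_\omega(p-\alpha)<\mathtt k$, i.e.\ $\kappa(\omega+p-\alpha)<0$; by convexity of $\kappa$ and $\kappa(\omega)<0=\kappa(\omega_+)$ this holds precisely for $0<p-\alpha<\omega_+-\omega$ (the requirement $p-\alpha>0$ also ensuring $\Phi_\omega(p-\alpha)<\infty$). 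This is (ii).

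For (iii), weak continuity of $t\mapsto\rho^{  }_t$ follows from the Feller property of the pssMp $X_\dagger$. For the generator identity one argues as in \autoref{C0} and \autoref{LanC0}: the exponential Lévy process $\exp(\xi_\dagger(\cdot))$ is Feller, and since the killing at rate $\mathtt k=-\kappa(\omega)$ cancels the term $-\kappa(\omega)g$ appearing in \autoref{C0}, its generator on $\mathcal C_c^\infty(0,\infty)$ is $g\mapsto x^{-\omega}\mathcal L_0(h_\omega g)$; Dynkin's formula produces a martingale, and the Lamperti time-change turns it into the local martingale $g(X_\dagger(t))-\int_0^t\mathcal A^{(\alpha)}_\dagger g(X_\dagger(u))\,\dd u$, where $\mathcal A^{(\alpha)}_\dagger g=h_\alpha\cdot x^{-\omega}\mathcal L_0(h_\omega g)=x^{-\omega}\mathcal L_\alpha(h_\omega g)$. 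In contrast with \autoref{LanC0}, $\mathcal A^{(\alpha)}_\dagger g$ is now unbounded; but by \autorefpref{Lbounds}{i:Lbounds:asym} it vanishes near $0$ and is $o(x^{q+\alpha-\omega})$ near $\infty$ for every $q\in\dom\kappa$, hence is dominated by $C h_p$ for a suitable exponent $p$. Parts (i) and (ii) then provide exactly the integrability required to upgrade the local martingale to a true one and, by dominated convergence, to show that $s\mapsto\langle\rho^{  }_s,\mathcal A^{(\alpha)}_\dagger g\rangle$ is continuous; taking expectations yields $\langle\rho^{  }_t,g\rangle=g(1)+\int_0^t\langle\rho^{  }_s,\mathcal A^{(\alpha)}_\dagger g\rangle\,\dd s$, and differentiating gives the claim. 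Finally, (iv) is obtained by running the same argument for the power function $h_q$ with $0<q<\omega_+-\omega$ --- which is licit since $\langle\rho^{  }_t,h_q\rangle<\infty$ by (i) --- approximating $h_q$ by smooth compactly supported functions and passing to the limit with the moment bounds of (i)--(ii); here $\mathcal A^{(\alpha)}_\dagger h_q=\kappa(\omega+q)h_{q+\alpha}$ by \eqref{Eqvpss}. The main difficulty throughout is precisely the unboundedness of $\mathcal A^{(\alpha)}_\dagger$, which is why the moment estimates (i)--(ii) must be in place before (iii)--(iv) can be established.
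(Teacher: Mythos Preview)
Your proposal is correct and follows essentially the same route as the paper: parts (i) and (ii) coincide with the paper's arguments (the paper cites \cite[Corollary~VII.2]{BeLP} rather than \cite{Kypr2} for the exponential law of the supremum, but the content is identical), and for (iii) you correctly identify that the new difficulty over \autoref{LanC0} is the unboundedness of $\mathcal A^{(\alpha)}_\dagger g$, which is resolved exactly as the paper does, via the moment bounds (i)--(ii) and the asymptotics of \autorefpref{Lbounds}{i:Lbounds:asym}. The only minor difference is in (iv): the paper does not approximate $h_q$ by compactly supported functions but works directly, observing that $N_t = X_\dagger(t)^q - 1 - \kappa(\omega+q)\int_0^t X_\dagger(s)^{q+\alpha}\,\dd s$ is a local martingale and then bounding $\sup_{t\ge 0}\lvert N_t\rvert$ by $1+\sup_{t\ge 0} X_\dagger(t)^q - \kappa(\omega+q)\int_0^\infty X_\dagger(s)^{q+\alpha}\,\dd s$, which is in $L^1$ by (i) and (ii) respectively; this is slightly cleaner than the approximation you sketch.
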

\begin{proof}
  \begin{enumerate}[(i)]
    \item 
      From the very construction of $X_{\dagger}$, the overall supremum
      \linebreak % (!) prevent math being split (!)
      $\bar X_{\dagger}\coloneqq \sup_{t\geq 0} X_{\dagger}(t)$ is given by $\bar X_{\dagger}=\exp(\bar\xi_{\dagger})$, 
      with $\bar\xi_{\dagger}\coloneqq \sup_{t\geq 0} \xi_{\dagger}(t)$. We infer from Corollary VII.2 
      in \cite{BeLP} that $\bar\xi_{\dagger}$ has the exponential distribution with parameter $\omega_{_+}-\omega$
      (which is the positive root to the equation $\Phi_{\dagger}(q)=0$), and our claim follows.
    \item
      We begin with the following calculation, using the discussion of pssMps in the preceding section.
      Recall that $S$ is the time-change appearing in the Lamperti transform relating $X_{\dagger}$ and $\xi_{\dagger}$, and that there is the identity $\dd S(t)=\exp(\alpha \xi_{\dagger}(t))\dd t$. We have therefore  
      \begin{align*}
        \int_0^\infty X_{\dagger}(u)^p \, \dd u 
        &= \int_0^\infty \e^{p\xi_{\dagger}(S(u))} \, \dd u \\
        &= \int_0^\infty \e^{(p-\alpha)\xi_{\dagger}(S(u))} \, \dd S(u)
%       &= \int_0^{S(t)} \e^{(p-\alpha)\xi_{\dagger}(s)} \, \dd s 
        = \int_0^{\infty} \e^{(p-\alpha)\xi_{\dagger}(s)} \, \dd s.
     \end{align*}
     But now we can consider the expectation:
     \begin{align*} \E_x\biggl[ \int_0^\infty X_{\dagger}(u)^p \, \dd u \biggr]
       &= x^{p-\alpha} \E\biggl[ \int_0^\infty \e^{(p-\alpha)\xi_{\dagger}(s)}\, \dd s \biggr] \\
       &= \begin{cases}
            \dfrac{x^{p-\alpha}}{\kappa(p-\alpha+\omega)}, & \text{if } \kappa(p-\alpha+\omega) < 0, \\
            \infty, & \text{otherwise}.
          \end{cases}
     \end{align*}
     We complete the proof by recalling the definition of  $\omega_{_+}$.
    \item
      Just as in the proof of \autoref{LanC0}, the first assertion follows from  the
      Feller property of self-similar Markov processes, and  the process
      \begin{equation}\label{e:time-change-local-martingale}
        N_t \coloneqq g(X_{\dagger}(t))-g(1)-\int_0^t {\mathcal A}^{(\alpha)}_{\dagger}  g(X_{\dagger}(s)) \d s
      \end{equation}
      is a local martingale for every $g\in{\mathcal C}^{\infty}_c(0,\infty)$.
      We will show that \[ \E\bigl(\textstyle\sup_{s \le t} \abs{N_s}\bigr) < \infty, \qquad t \ge 0, \]
      which implies that $N$ is a true martingale;
      see \cite[Theorem I.51]{Pro-stoch-calc}.
    
     Since $g$ is bounded, certainly $\sup_{s\le t}\abs{g(X_{\dagger}(s))-g(x)}$ is in
     $L^1(\P)$. In constrast to \autoref{LanC0}, the function
     $\mathcal{A}^{(\alpha)}_{\dagger}g$ may be unbounded for $\alpha > 0$;
     however, we do know from 
     \autorefpref{Lbounds}{i:Lbounds:asym} that ${\mathcal A}^{(\alpha)}_{\dagger}g$
     is zero on some neighborhood of $0$ and,
     for any $q \in \dom \kappa$, fulfills ${\mathcal A}^{(\alpha)}_{\dagger}g= o(x^{q+\alpha-\omega})$ 
     as $x\to \infty$.
    
     We let $\omega < q < \omega_{_+}$ and keep
     it fixed for the rest of the proof.
     For some $K>0$ we then have
     \begin{align}
       \E \biggl[\sup_{u\le t} \abs[\bigg]{\int_0^u {\mathcal A}^{(\alpha)}_{\dagger}g(X_{\dagger}(s))\, \dd s} \biggr]
       &\le
       \E \biggl[\int_0^t \abs[\big]{ {\mathcal A}^{(\alpha)}_{\dagger}g(X_{\dagger}(s))}\,\dd s \biggr] \label{e:mg-crit-ef} \\
       &\le t \sup_{[0,K]} \abs{{\mathcal A}^{(\alpha)}_{\dagger}g}
       +  \E \biggl[ \int_0^t X_{\dagger}(s)^{q+\alpha-\omega} \, \Indic{X_{\dagger}(s) > K} \, \dd s \biggr]. \nonumber
     \end{align}
     Setting $p = q+\alpha-\omega$ in part \pref{i:LanC0-:ef}, we see that the right-hand
     side is finite.
    
     We have thus shown that $N$ is a true martingale, and
     \begin{equation*}\label{e:Ag-Fubini}
       \E\left( \int_0^t |{\mathcal A}^{(\alpha)}_{\dagger}g(X_{\dagger}(s)) |\d s \right)<\infty.
     \end{equation*}
     Taking expectations in \eqref{e:time-change-local-martingale}
     and applying Fubini's theorem, we obtain
     \[
       \langle \rho^{  }_t, g\rangle - g(1)=\int_0^t \langle \rho^{  }_s, 
       \mathcal{A}^{(\alpha)}_{\dagger}g\rangle\d s, 
     \]
     which completes the proof.
    \item
      This part is proved by setting $g(x) = x^q$ in the previous part, as follows.
      Using the Markov property one sees immediately that the process
      \[ M_t = \e^{q\xi_{\dagger}(t)} - 1 - \kappa(\omega_{_-}+q)\int_0^t \e^{q\xi_{\dagger}(s)}\,\dd s, \qquad t \ge 0, \]
      is a martingale in the filtration of $\xi_{\dagger}$ for every $q \ge 0$.
      Applying the same reasoning with the time-change as in \autoref{LanC0}, it follows
      that
      \[
        N_t = X_{\dagger}(t)^q - 1 - \kappa(\omega+q)\int_0^t X_{\dagger}(s)^{q+\alpha}\, \dd s, \qquad t \ge 0,
      \]
      is a local martingale. 
      (For our choice of $g$, we have
      ${\mathcal A}^{(\alpha)}_{\dagger}g(x) = \kappa(\omega+q) x^{q+\alpha}$,
      so this is consistent with the proof of part (iii).)
      We observe that
      \[ \sup_{t \ge 0} \abs{N_t} 
        \le 
        1 + \sup_{t \ge 0} X_{\dagger}(t)^q
        - \kappa(\omega+q) \int_0^\infty X_{\dagger}(s)^{q+\alpha} \, \dd s .
      \]
      We now apply directly parts (i) and (ii) of this lemma in order to show
      that $\E [ \sup_{t \ge 0} \abs{N_t}] < \infty$. This is a sufficient criterion
      for $N$ to be a uniformly integrable martingale (see \cite[Theorem I.51]{Pro-stoch-calc}),
      and the remainder of the proof follows in the same way as in part (iii). \qedhere
  \end{enumerate}
\end{proof}

We can then repeat the calculations which were made after the proof of \autoref{LanC0},
and arrive at:

\begin{cor} \label{c:mu-alpha-pos}
  Suppose that \eqref{Eqnegspeed} holds.
%and $0<\alpha <\omega_{_+}-\omega_{_-}$. 
  Define  $\mu^{ }_t = h_{-\omega}\rho_t^{  }$, for $t\geq 0$.
  Then the vaguely continuous family of measures $(\mu^{ }_t)_{t\geq 0}$ solves 
  \eqref{EqFG} with ${\mathcal L}={\mathcal L}_{\alpha}$ both for
  \linebreak % (!) prevent math being split (!)
  $f \in \Ctest(0,\infty)$ and
  for $f=h_q$, for any $\omega<q<\omega_+$.
\end{cor}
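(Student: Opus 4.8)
The plan is to transfer \autoref{LanC0-} to the measures $\mu_t := h_{-\omega}\rho_t$ in the same way that \autoref{LanC0} was transferred to produce a solution in \autoref{c:mu-alpha-neg}, with the reference Lévy process $\xi_+$ replaced by $\xi_\dagger$ and the exponent $\omega_+$ replaced by $\omega$. I would first dispose of the routine preliminaries: since $\rho_t$ is a sub-probability measure on $(0,\infty)$ and $h_{-\omega}$ is locally bounded there, each $\mu_t$ is a locally finite measure on $(0,\infty)$; since $X_\dagger(0)=1$ under $\ssP_1$ we have $\rho_0=\delta_1$, and as $h_{-\omega}(1)=1$ this gives $\mu_0=\delta_1$; and for $f\in\mathcal{C}_c(0,\infty)$ one has $\langle\mu_t,f\rangle=\langle\rho_t,h_{-\omega}f\rangle$ with $h_{-\omega}f$ bounded and continuous, so vague continuity of $(\mu_t)$ follows from the weak continuity of $(\rho_t)$ asserted in \autorefpref{LanC0-}{i:LanC0-:sol}.

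The mechanism is the identity $\langle\mu_t,F\rangle=\langle\rho_t,h_{-\omega}F\rangle$ together with the definition $\mathcal{A}^{(\alpha)}_{\dagger}g = h_{-\omega}\,\mathcal{L}_\alpha(h_\omega g)$: choosing $F=h_\omega g$ turns the equation $\partial_t\langle\rho_t,g\rangle=\langle\rho_t,\mathcal{A}^{(\alpha)}_{\dagger}g\rangle$ of \autoref{LanC0-} into $\partial_t\langle\mu_t,h_\omega g\rangle=\langle\mu_t,\mathcal{L}_\alpha(h_\omega g)\rangle$. For $f\in\Ctest(0,\infty)$ I would take $g=h_{-\omega}f$, which again lies in $\Ctest(0,\infty)$, so that $h_\omega g=f$; applying \autorefpref{LanC0-}{i:LanC0-:sol} to this $g$ and integrating from $0$ to $t$ gives precisely \eqref{EqFG} for $f$. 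The local integrability of $s\mapsto\langle\mu_s,\mathcal{L}_\alpha f\rangle$ that \eqref{EqFG} implicitly requires is exactly the bound $\E\bigl[\int_0^t\abs{\mathcal{A}^{(\alpha)}_{\dagger}g(X_\dagger(s))}\,\dd s\bigr]<\infty$ already established inside the proof of \autoref{LanC0-}.

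For $f=h_q$ with $\omega<q<\omega_+$ I would instead take $g=h_{q-\omega}$, so that $0<q-\omega<\omega_+-\omega$ and the last part of \autoref{LanC0-} applies to $g$. Here $q\in[\omega,\omega_+]\subseteq\dom\kappa$ — the inclusion holding because $\kappa(\omega)$ and $\kappa(\omega_+)$ are finite and $\dom\kappa$ is an interval (\autorefpref{Lbounds}{i:Lbounds:def}) — so $h_\omega g=h_q$ and, by \eqref{Eqvpss}, $\mathcal{A}^{(\alpha)}_{\dagger}g=h_{-\omega}\mathcal{L}_\alpha h_q=\kappa(q)h_{q+\alpha-\omega}$. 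Integrating as before yields $\langle\mu_t,h_q\rangle=1+\kappa(q)\int_0^t\langle\mu_s,h_{q+\alpha}\rangle\,\dd s$, which is \eqref{EqFG} for $h_q$ since $\mathcal{L}_\alpha h_q=\kappa(q)h_{q+\alpha}$ and $\langle\mu_0,h_q\rangle=1$.

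The only step needing attention is checking that the exponents in the $h_q$ case fall within the ranges supplied by \autoref{LanC0-}: $\langle\mu_s,h_q\rangle=\ssE_1[X_\dagger(s)^{q-\omega}]$ is finite by \autorefpref{LanC0-}{i:LanC0-:sup} as $q-\omega<\omega_+-\omega$, and $s\mapsto\langle\mu_s,h_{q+\alpha}\rangle=\ssE_1[X_\dagger(s)^{q+\alpha-\omega}]$ is locally integrable by \autorefpref{LanC0-}{i:LanC0-:ef} because $(q+\alpha-\omega)-\alpha=q-\omega\in(0,\omega_+-\omega)$. These are immediate from $\omega<q<\omega_+$, so there is no genuine obstacle here: the substantive work — the positive self-similar Markov process moment estimates and the upgrade of the Lamperti-time-changed local martingale to a uniformly integrable martingale — has already been carried out in \autoref{LanC0-}.
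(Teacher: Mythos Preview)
Your proposal is correct and follows essentially the same approach as the paper, which simply says to ``repeat the calculations which were made after the proof of \autoref{LanC0}'' (i.e., those in the proof of \autoref{c:mu-alpha-neg}), together with the additional part (iv) of \autoref{LanC0-} for the power functions. Your write-up in fact supplies more detail than the paper does, in particular the explicit verification that the exponents in the $h_q$ case fall into the ranges required by parts (i), (ii) and (iv) of \autoref{LanC0-}.
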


An interesting contrast with the case $\alpha < 0$
is that we do \emph{not} show that $\mu_t^{  }$
solves \eqref{EqFG} for all power functions.

\medskip\noindent
We now give the basis of a solution to the growth-fragmentation equation starting from
the zero measure; in this case, the solution should be interpreted, not as spontaneous generation
from infinitely small masses, but as starting from infinite
mass and breaking apart instantaneously. Recall that the equation $\kappa(q)=0$ has at most two solutions.
More precisely, we have already seen that there is always a unique solution
$\omega_{_+}$ with $\kappa'(\omega_{_+}) > 0$ 
(this is the Malthusian exponent defined by \eqref{EqMalthus}).
When a second solution, say $\omega_{_-}$, exists,
then $\omega_{_-}<\omega_{_+}$ and $\kappa'(\omega_{_-}) \in[-\infty, 0)$. 
We give the results under the assumptions:
\begin{equation}\label{EqMalthus-}
  \hbox{
    the equation $\kappa(q)=0$ with $q\geq 0$
    has two solutions $\omega_{_-}<\omega_{_+}$, and $\kappa'(\omega_{_-})>-\infty$
  }
\end{equation}
which is thus stronger than \eqref{EqMalthus}. 
We write $\xi_-$ for the spectrally negative L\'evy process with Laplace exponent
$\Phi_-(q)\coloneqq \Phi_{\omega_-} = \kappa(q+\omega_{_-})$, and then $X_-$ for the pssMp
with index $-\alpha$ associated to $\xi_-$ by Lamperti's transform. 

\begin{lem}\label{Lentrance}
  Assume that \eqref{EqMalthus-} holds.
  Then there exists a c\`adl\`ag process $({\mathcal X}(t))_{t> 0}$ with values in 
  $(0,\infty)$ and $\lim_{t\to 0+}{\mathcal X}(t)=\infty$ a.s.\ such that:
  \begin{itemize}
    \item
      For every $s>0$, conditionally on ${\mathcal X}(s)=x$, the shifted process $({\mathcal X}(s+t))_{t\geq 0}$
      has the law $\ssP_x$ of the pssMp $X_-$ started from $x$.
  
    \item
      For all $0< \varepsilon < (\omega_+-\omega_{_-})/\alpha$, there is $c(\varepsilon)<\infty$ such that 
      $$\E\left({\mathcal X}(t)^{\alpha(1-\varepsilon)}\right) = c(\varepsilon) t^{\varepsilon-1},\qquad t>0.$$
  \end{itemize}
\end{lem}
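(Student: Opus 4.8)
The plan is to realise $\mathcal X$ as the positive self-similar Markov process $X_-$ started continuously from the boundary point $+\infty$, and then to read off the moment identity from the self-similarity of this entrance law together with a computation of exponential functionals controlled by $\kappa$.

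First I would record that, since $\kappa$ is convex and vanishes at the two points $\omega_{_-}<\omega_{_+}$, one has $\kappa'(\omega_{_-})<0$, while $\kappa'(\omega_{_-})>-\infty$ is exactly the extra requirement in \eqref{EqMalthus-}; thus $\xi_-$ is a (non-killed) spectrally negative L\'evy process of mean $\Phi_-'(0)=\kappa'(\omega_{_-})\in(-\infty,0)$, so that $\xi_-$ drifts to $-\infty$ with a finite mean and $X_-$, which has the negative index $-\alpha$, converges to $0$ as $t\to\infty$ without ever reaching $0$ or $+\infty$ from a point of $(0,\infty)$. The key point is that $+\infty$ is then an \emph{entrance boundary} for $X_-$: applying the inversion $x\mapsto 1/x$, the process $1/X_-$ is a pssMp of positive index $\alpha$ whose associated L\'evy process is $-\xi_-$, which drifts to $+\infty$ with positive finite mean $-\kappa'(\omega_{_-})$ — this is precisely where the hypothesis $\kappa'(\omega_{_-})>-\infty$ is used — so by \cite[Theorem 1]{BY-entrance} (the entrance-law result already quoted in the proof of \autoref{p:asymptotic-negative}, and discussed in \autoref{s:alpha-neg}) it extends to a Feller process on $[0,\infty)$ that may be started continuously from $0$. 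This produces a c\`adl\`ag process $(\mathcal Y(t))_{t>0}$ with $\mathcal Y(0+)=0$ a.s.\ and the stated shift--Markov property, and setting $\mathcal X=1/\mathcal Y$ gives a c\`adl\`ag process in $(0,\infty)$ with $\lim_{t\to 0+}\mathcal X(t)=+\infty$ a.s.\ such that, conditionally on $\mathcal X(s)=x$, $(\mathcal X(s+t))_{t\ge0}$ has law $\ssP_x$, since the inversion intertwines $X_-$ with $1/X_-$. This settles the first assertion.

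For the moment identity I would first invoke self-similarity: the scaling property of $X_-$ passes to the entrance law, so that $(c\,\mathcal X(tc^{\alpha}))_{t\ge0}$ has the same law as $(\mathcal X(t))_{t\ge0}$; taking $c=t^{-1/\alpha}$ yields $\mathcal X(t)\eql t^{-1/\alpha}\mathcal X(1)$ and hence
\[
  \E\bigl(\mathcal X(t)^{\alpha(1-\varepsilon)}\bigr)=t^{\varepsilon-1}\,\E\bigl(\mathcal X(1)^{\alpha(1-\varepsilon)}\bigr),\qquad t>0,
\]
so it remains to put $c(\varepsilon)=\E(\mathcal X(1)^{\alpha(1-\varepsilon)})$ and to show that it is finite exactly when $0<\varepsilon<(\omega_{_+}-\omega_{_-})/\alpha$. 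Here I would use the explicit description of the entrance law via exponential functionals: since $\kappa(\omega_{_-})=0$, one checks from \eqref{Eqvpss} that $X_+(\cdot)^{\omega_{_-}-\omega_{_+}}$ is a (local) martingale, so $X_-$ is the Doob $h$-transform of $X_+$ by $h_{\omega_{_-}-\omega_{_+}}$ — the pssMp counterpart of the Esscher transform relating $\xi_+$ and $\xi_-$, cf.\ \cite[Theorem 3.9]{Kypr2}; and, $\xi_+$ having positive mean, $X_+$ reaches $+\infty$ continuously at the finite time $\int_0^\infty \e^{-\alpha\xi_+(s)}\,\d s$. Time-reversing $X_+$ from that time — the mechanism behind the Bertoin--Yor entrance-law construction (\cite{BY-entrance}, see also \cite{BY-moments}) — expresses the law of $\mathcal X(1)$, hence its Mellin transform at the relevant exponents, through the law of this exponential functional, whose Mellin transform is governed by $\Phi_-(\cdot)=\kappa(\cdot+\omega_{_-})$. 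Reading off the $\alpha(1-\varepsilon)$-moment, finiteness at the upper end $\varepsilon<(\omega_{_+}-\omega_{_-})/\alpha$ is forced by the fact that $\kappa(q)<0$ exactly for $q\in(\omega_{_-},\omega_{_+})$, while finiteness at the lower end $\varepsilon>0$, i.e.\ an order strictly below $\alpha$, reflects integrability of the entrance law near $t=0$, where $\mathcal X(t)$ is of order $t^{-1/\alpha}$, and again uses that $\xi_-$ has finite mean.

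The main obstacle is the third step: one must invoke the entrance-law theory under exactly the hypotheses of \eqref{EqMalthus-} — the finite-mean assumption $\kappa'(\omega_{_-})>-\infty$ is what makes $+\infty$ a genuine Feller entrance boundary — and then carry out the exponential-functional computation, whose interval of finiteness is dictated by the sign of $\kappa$ on $[\omega_{_-},\omega_{_+}]$. The scaling reduction, the $h$-transform identity and the transfer of the Markov property from $1/X_-$ are then routine.
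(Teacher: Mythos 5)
Your treatment of the first bullet is essentially the paper's: invert to get $Y=1/X_-$, a pssMp of positive index $\alpha$ whose driving Lévy process $-\xi_-$ has finite positive mean $m=-\kappa'(\omega_{_-})$ (this is exactly where the extra clause in \eqref{EqMalthus-} enters), apply \cite[Theorem~1]{BY-entrance} to start $Y$ from $0+$, and set $\mathcal X=1/\mathcal Y$. Nothing to add there.

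For the second bullet there is a real gap. Your scaling reduction $\mathcal X(t)\eql t^{-1/\alpha}\mathcal X(1)$ is correct and cleanly isolates $c(\varepsilon)=\E\bigl(\mathcal X(1)^{\alpha(1-\varepsilon)}\bigr)$; the whole content is then to prove $c(\varepsilon)<\infty$, and that is precisely where your sketch goes wrong. You propose to realise $\mathcal X$ by time-reversing $X_+$ from its finite explosion time $\int_0^\infty\e^{-\alpha\xi_+(s)}\,\d s$, calling this ``the mechanism behind the Bertoin--Yor entrance-law construction.'' But time-reversal and Esscher transform are different dualities: reversing $X_+$ from the far future corresponds to the pssMp driven by the time-reversal (dual) of $\xi_+$, which for a spectrally negative Lévy process is the spectrally \emph{positive} process $-\xi_+$, not $\xi_-$. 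The process $\xi_-$ is related to $\xi_+$ by the Esscher change of measure with exponent $\omega_{_-}-\omega_{_+}$ (your $h$-transform remark), which is a different operation. Consequently the exponential functional you bring in, $\int_0^\infty\e^{-\alpha\xi_+(s)}\,\d s$, is \emph{not} the one whose Mellin transform is governed by $\Phi_-=\kappa(\cdot+\omega_{_-})$ — that claim in your sketch is false as stated — and the argument does not in fact express $\E\bigl(\mathcal X(1)^{\alpha(1-\varepsilon)}\bigr)$ in a form from which finiteness on $(0,(\omega_+-\omega_{_-})/\alpha)$ can be read off.

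The paper does not go through $X_+$ at all. It applies \cite[Theorem~1]{BY-entrance} directly to $Y=1/X_-$, which yields, for $I\coloneqq\int_0^\infty\exp(\alpha\xi_-(s))\,\d s$ (the exponential functional of $\alpha\xi_-$, not of $\xi_+$), the explicit identity $\E\bigl(\mathcal Y^{\alpha(\varepsilon-1)}(t)\bigr)=\frac{1}{\alpha m}\,\E\bigl(I^{-1}(t/I)^{\varepsilon-1}\bigr)$; this gives $c(\varepsilon)=\E(I^{\varepsilon})/(\alpha m)$ with no detour through $t=1$. Finiteness of $\E(I^{\varepsilon})$ for $0<\varepsilon<(\omega_+-\omega_{_-})/\alpha$ is then a citation: the Laplace exponent $q\mapsto\Phi_-(\alpha q)=\kappa(\alpha q+\omega_{_-})$ of $\alpha\xi_-$ is negative at $q=\varepsilon$ precisely on this range, and \cite[Lemma~3]{Rivero} turns that sign condition into $\E(I^{\varepsilon})<\infty$. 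To repair your argument, drop the time-reversal of $X_+$ and instead invoke this pair of results; the self-similarity reduction then becomes an optional but harmless simplification.
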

\begin{proof}
  Let $Y$ denote the pssMp  with self-similarity index $\alpha$ associated to
  the L\'evy process $-\xi_-$, so $Y$ has the same law as $1/X_-$.
  Because \[ \E(-\xi_-(1))=-\Phi'_-(0+)\coloneqq m\in(0,\infty), \]
  \cite{BY-entrance} shows that $0+$ is an entrance boundary for $Y$, 
  i.e., there exists a c\`adl\`ag process $({\mathcal Y}(t))_{t> 0}$
  with values in $(0,\infty)$ and $\lim_{t\to 0+}{\mathcal Y}(t)=0$ a.s., such that, 
  for every $s>0$, conditionally on ${\mathcal Y}(s)=y$, the shifted process
  $({\mathcal Y}(s+t))_{t\geq 0}$ has the law $Y$ started from $y$.
%% I thought this was confusing, because we already said explicitly that $X_-$ has index $-\alpha$:
%  [Beware however that our convention for the index of self-similarity here is the opposite of that in \cite{BY-entrance}.]
  Our first claim follows by setting $\mathcal{X}(t) = 1/\mathcal{Y}(t)$.
 
  Further, according to Theorem 1 in \cite{BY-entrance}, there is the identity
  $$\E\left( {\mathcal Y}^{\alpha(\varepsilon-1)}(t)\right) = \frac{1}{\alpha m}\E\left(I^{-1}(t/I)^{\varepsilon-1}\right)$$
  where $I\coloneqq \int_0^{\infty}\exp(\alpha\xi_-(s))\dd s$. It thus follows
  $$\E\left( {\mathcal X}^{\alpha(1-\varepsilon)}(t)\right) = c(\varepsilon) t^{\varepsilon-1}$$
  where $c(\varepsilon)=  \E\left(I^{\varepsilon}\right)/ \alpha m \in (0,\infty]$.
 
  For every $0< \varepsilon < (\omega_+-\omega_{_-})/\alpha$, the Laplace exponent 
  $q\mapsto \Phi_-(\alpha q)$ of $\alpha \xi_-$ fulfills $\Phi_-(\alpha \varepsilon)<0$, and according to 
  Lemma 3 in Rivero \cite{Rivero}, this ensures that $\E\left(I^{\varepsilon}\right)<\infty$.
\end{proof}

We next further require that 
$\omega_{_-} \in (\dom \kappa)^\circ$. This is only a little stronger than the
condition $\kappa'(\omega_{_-})>-\infty$, which is necessary and sufficient for $\mathcal X$ to exist.
We write  $\Aa_-$ for the operator $\Aa_{\dagger}$ given in \autorefpref{LanC0-}{i:LanC0-:sol} for
$\omega=\omega_{_-}$, and deduce the following.

\begin{cor}\label{Centrance}
  Assume that \eqref{EqMalthus-} holds and that $\omega_{_-} \in (\dom \kappa)^\circ$. 
  For $t>0$, write $\pi_t$ for the distribution of ${\mathcal X}(t)$. Then, for every $f\in{\mathcal C}^{\infty}_c(0,\infty)$, we have 
  $$\int_0^t \abs[\big]{\langle \pi_s, \Aa_-f\rangle } \, \dd s <\infty,$$
  and there is the identity
  $$\langle \pi_t, f\rangle= \int_0^t \langle \pi_s, \Aa_-f\rangle \, \dd s.$$
\end{cor}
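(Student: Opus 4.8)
The plan is to transfer the forward equation established in \autorefpref{LanC0-}{i:LanC0-:sol} from the pssMp $X_-$ started at a fixed point to the entrance-from-$\infty$ process $\mathcal X$, by conditioning on the position at a small time $s>0$ and letting $s\to 0+$. Concretely, fix $f\in{\mathcal C}^{\infty}_c(0,\infty)$ and, for $s>0$, apply \autorefpref{LanC0-}{i:LanC0-:sol} (with $\omega=\omega_{_-}$, which is legitimate since $\omega_{_-}\in(\dom\kappa)^\circ$ guarantees the integrability conditions (i)--(ii) of \autoref{LanC0-} hold on the relevant range) to the process $X_-$ started from position $x$, giving
\[ \ssE_x\bigl[f(X_-(t))\bigr] - f(x) = \int_0^t \ssE_x\bigl[\Aa_-f(X_-(r))\bigr]\,\dd r, \qquad t\ge 0, \]
with the integrand absolutely integrable. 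Integrating this identity against $\pi_s(\dd x)$ and using the Markov-type property of $\mathcal X$ from \autoref{Lentrance} — namely that conditionally on $\mathcal X(s)=x$ the shifted process is $X_-$ started from $x$ — yields
\[ \langle \pi_{s+t}, f\rangle - \langle \pi_s, f\rangle = \int_s^{s+t} \langle \pi_r, \Aa_-f\rangle\,\dd r \]
for all $s>0$, $t\ge 0$, provided we can justify interchanging the $\pi_s$-integration with the time integral.

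The first genuine point is this interchange together with the finiteness claim $\int_0^t\abs{\langle\pi_r,\Aa_-f\rangle}\,\dd r<\infty$. Here I would use the tail estimate from \autoref{Lentrance}: by \autorefpref{Lbounds}{i:Lbounds:asym}, $\Aa_-f$ vanishes near $0$ and satisfies $\Aa_-f(x)=o(x^{q+\alpha-\omega_{_-}})$ as $x\to\infty$ for every $q\in\dom\kappa$; choosing $q\in(\omega_{_-},\omega_+)\cap\dom\kappa$ and writing $\varepsilon$ with $\alpha(1-\varepsilon)=q+\alpha-\omega_{_-}$, i.e. $\varepsilon=(\omega_+-\omega_{_-})/\alpha - (\omega_+-q)/\alpha$, one checks $0<\varepsilon<(\omega_+-\omega_{_-})/\alpha$, so $\E\bigl[\mathcal X(r)^{\alpha(1-\varepsilon)}\bigr]=c(\varepsilon)r^{\varepsilon-1}$. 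Hence $\abs{\langle\pi_r,\Aa_-f\rangle}\le \mathrm{const}\cdot(1+r^{\varepsilon-1})$, which is integrable near $0$ because $\varepsilon>0$; this simultaneously gives the finiteness statement and, via Fubini, the desired interchange. (The bounded contribution of $\Aa_-f$ on a compact set containing the support-related region is harmless since $\pi_r$ is a probability measure.)

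The second point is passing to the limit $s\to 0+$. Since $\lim_{s\to0+}\mathcal X(s)=\infty$ a.s.\ and $f$ has compact support in $(0,\infty)$, we have $f(\mathcal X(s))\to 0$ a.s., and by bounded convergence $\langle\pi_s,f\rangle\to 0$. For the right-hand side, the integrability bound just obtained shows $\int_0^s\abs{\langle\pi_r,\Aa_-f\rangle}\,\dd r\to 0$ as $s\to0+$, so the integral $\int_s^{s+t}$ converges to $\int_0^t$. Taking $s\to0+$ in the displayed identity therefore gives $\langle\pi_t,f\rangle=\int_0^t\langle\pi_r,\Aa_-f\rangle\,\dd r$, as claimed. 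The main obstacle is really the bookkeeping around the exponent matching in the tail estimate — one must be careful that $\dom\kappa$ is large enough to pick an admissible $q$ strictly between $\omega_{_-}$ and $\omega_+$, which is exactly why the hypothesis $\omega_{_-}\in(\dom\kappa)^\circ$ (rather than merely $\kappa'(\omega_{_-})>-\infty$) is imposed; everything else is a routine application of the Markov property of $\mathcal X$ and Fubini.
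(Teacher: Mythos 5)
Your overall strategy matches the paper's: use \autorefpref{Lbounds}{i:Lbounds:asym} and the Lamperti formula to bound $\Aa_-f(x)$ by a power of $x$ at infinity, feed this into the moment estimate of \autoref{Lentrance} to get $\abs{\langle\pi_s,\Aa_-f\rangle}\le C(1+s^{\varepsilon-1})$, obtain the forward equation on $[s,s+t]$ via the Markov-type property and a true-martingale argument, and let $s\to 0+$. However, there is a concrete algebraic error in your exponent matching which invalidates the integrability step. You set $\alpha(1-\varepsilon)=q+\alpha-\omega_{_-}$ and claim the solution is $\varepsilon=(q-\omega_{_-})/\alpha$ with $q\in(\omega_{_-},\omega_+)$. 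In fact the equation gives $-\alpha\varepsilon=q-\omega_{_-}$, i.e.\ $\varepsilon=(\omega_{_-}-q)/\alpha$, so for $\varepsilon>0$ (which you need in order that $s^{\varepsilon-1}$ be integrable at $0$ and that \autoref{Lentrance} apply) you must take $q<\omega_{_-}$, not $q>\omega_{_-}$. With your choice of $q$ the resulting $\varepsilon$ is nonpositive and the bound degenerates. This is not cosmetic: picking $q<\omega_{_-}$ in $\dom\kappa$ is exactly the point where the extra hypothesis $\omega_{_-}\in(\dom\kappa)^\circ$ enters (it guarantees $\dom\kappa$ extends strictly to the left of $\omega_{_-}$). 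You correctly identified in your closing remarks that this hypothesis must be what makes the bookkeeping work, but then chose $q$ on the side $(\omega_{_-},\omega_+)$, where membership in $\dom\kappa$ is automatic by convexity and the interior hypothesis would be superfluous — a sign that the algebra has gone astray. The fix is to take $q=\omega_{_-}-\alpha\varepsilon$ for small $\varepsilon>0$ and invoke $\omega_{_-}-\alpha\varepsilon\in\dom\kappa$; after that, the rest of your argument (Fubini, true martingale, and the limit $s\to 0+$ using $\mathcal X(s)\to\infty$ and $f$ compactly supported) goes through as in the paper.

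A minor secondary point: you invoke \autorefpref{LanC0-}{i:LanC0-:sol} directly for $\omega=\omega_{_-}$, whereas that lemma is stated under $\kappa(\omega)<0$. Here $\kappa(\omega_{_-})=0$, so $\xi_-$ is unkilled; the estimates in parts (i)--(ii) of \autoref{LanC0-} do carry over because $\xi_-$ has strictly negative mean, but strictly speaking this requires a remark. The paper sidesteps this by extracting only the local-martingale identity from that proof (which needs no killing) and re-establishing the true-martingale property directly for $\mathcal X$ from the bound supplied by \autoref{Lentrance}.
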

\begin{proof}
  Recall from \autorefpref{LanC0-}{i:LanC0-:sol} that
  $h_{\omega_{_-}}\Aa_-f={\mathcal L}_{\alpha}(h_{\omega_{_-}}f)$,
  so \autorefpref{Lbounds}{i:Lbounds:asym} and the assumption that
  $\omega_{_-}-\alpha \varepsilon \in \dom \kappa$ for some $\varepsilon>0$ entail 
  $$\Aa_-f(x) = o\Bigl(x^{-\omega_{_-}+\alpha+\omega_{_-}-\alpha \varepsilon} \Bigr) = o\Bigl(x^{\alpha(1-\varepsilon)} \Bigr) .$$
  It now follows from the preceding lemma that
  $|\langle \pi_s, \Aa_-f\rangle | \leq C(s^{\varepsilon-1}+1)$, where $C$ is a finite constant
  depend ending only on $f$ and $\varepsilon$. Our first claim follows. 

  Recall then from the proof of \autorefpref{LanC0-}{i:LanC0-:sol} that 
  $$f(X_-(t))-\int_0^t \Aa_-f(X_-(s))\dd s\,,\qquad t\geq 0$$
  is a local martingale, and thus, thanks to \autoref{Lentrance}, so is
  $$f({\mathcal X}(t+s))-f({\mathcal X}(s))-\int_0^t \Aa_-f({\mathcal X}(r+s))\dd r\,,\qquad t\geq 0$$
  for all $s>0$. Since 
  $$\E\left( \int_0^t |\Aa_-f({\mathcal X}(r+s))|\dd r \right) = \int_s^{t+s}|\langle \pi_r, \Aa_- f\rangle | \dd r < \infty,$$
  the above process is actually a true martingale, and taking expectations, we arrive at the identity 
  $$\langle \pi_t, f\rangle- \langle \pi_s, f\rangle= \int_s^t \langle \pi_r, \Aa_-f\rangle  \dd r.$$
 
  With the observation above, we can let $s\to 0+$, and since $\langle \pi_s, f\rangle\to 0$ 
  thanks to \autoref{Lentrance}, we conclude that 
  $$\langle \pi_t, f\rangle= \int_0^t \langle \pi_r, \Aa_-f\rangle  \dd r.$$
\end{proof}

We conclude this section with the following corollary, which
demonstrates the existence of a solution to the growth-fragmentation equation
started from zero mass.
\begin{cor}
  Suppose that the hypotheses of \autoref{Centrance} are fulfilled.
  Let 
  \linebreak % (!) prevent math being split (!)
  $\gamma_t(\dd x)=x^{-\omega_{_-}}\pi_t(\dd x)$
  for $t>0$, and set $\gamma^{  }_0\equiv 0$. Then, the family $(\gamma^{  }_t)_{t\geq0}$ 
  solves \eqref{EqFG} for all $f\in{\mathcal C}^{\infty}_c(0,\infty)$
  when ${\mathcal L}={\mathcal L}_{\alpha}$ is given by \eqref{Eqop1}. 
\end{cor}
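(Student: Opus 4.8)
The plan is to obtain this as an essentially immediate consequence of \autoref{Centrance}, by conjugating the test function with the power $h_{\omega_{_-}}$. Fix $f\in{\mathcal C}^{\infty}_c(0,\infty)$ and put $g\coloneqq h_{-\omega_{_-}}f$, i.e.\ $g(x)=x^{-\omega_{_-}}f(x)$. Since $x\mapsto x^{-\omega_{_-}}$ is smooth on $(0,\infty)$ and $f$ is supported in a compact subset of $(0,\infty)$, the function $g$ again lies in ${\mathcal C}^{\infty}_c(0,\infty)$, hence is an admissible test function in \autoref{Centrance}. Recalling from \autorefpref{LanC0-}{i:LanC0-:sol} that $h_{\omega_{_-}}\Aa_-g={\mathcal L}_{\alpha}(h_{\omega_{_-}}g)$, which for the present $g$ becomes $\Aa_-g=h_{-\omega_{_-}}{\mathcal L}_{\alpha}f$, one gets, for every $t>0$, the two identities
\[ \langle \pi_t, g\rangle = \langle \gamma_t, f\rangle \qquad\text{and}\qquad \langle \pi_t, \Aa_-g\rangle = \langle \gamma_t, {\mathcal L}_{\alpha}f\rangle, \]
since $\gamma_t(\dd x)=x^{-\omega_{_-}}\pi_t(\dd x)$.

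Next I would dispatch the regularity requirements in the definition of a solution. As $\omega_{_-}\geq 0$, for any compact $[a,b]\subset(0,\infty)$ we have $\gamma_t([a,b])\leq a^{-\omega_{_-}}\pi_t([a,b])\leq a^{-\omega_{_-}}<\infty$, so each $\gamma_t$ is a locally finite measure on $(0,\infty)$. The first assertion of \autoref{Centrance} applied to $g$ yields $\int_0^t\abs{\langle \gamma_s, {\mathcal L}_{\alpha}f\rangle}\,\dd s<\infty$ for every $t>0$, so $s\mapsto\langle \gamma_s,{\mathcal L}_{\alpha}f\rangle$ is a well-defined, locally integrable function.

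It then remains only to transcribe the integral identity. The identity of \autoref{Centrance}, $\langle \pi_t,g\rangle=\int_0^t\langle \pi_s,\Aa_-g\rangle\,\dd s$, turns via the substitutions above into
\[ \langle \gamma_t, f\rangle = \int_0^t \langle \gamma_s, {\mathcal L}_{\alpha}f\rangle\,\dd s, \qquad t>0, \]
and since $\gamma_0\equiv 0$ this is precisely $\langle \gamma_t,f\rangle=\langle \gamma_0,f\rangle+\int_0^t\langle \gamma_s,{\mathcal L}_{\alpha}f\rangle\,\dd s$. Moreover $\langle\gamma_s,f\rangle=\E[g(\mathcal X(s))]\to 0=\langle\gamma_0,f\rangle$ as $s\to0+$, because $\mathcal X(s)\to\infty$ a.s.\ by \autoref{Lentrance} while $g$ has compact support; combined with the integral representation this also gives the vague continuity of $(\gamma_t)_{t\geq 0}$, at $t=0$ included. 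Thus $(\gamma_t)_{t\geq 0}$ solves \eqref{EqFG} with ${\mathcal L}={\mathcal L}_{\alpha}$ given by \eqref{Eqop1}, as claimed.

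There is no real obstacle in this last corollary: all the substantive work — constructing the entrance process $\mathcal X$ via \cite{BY-entrance}, and controlling $\Aa_-f=o(x^{\alpha(1-\varepsilon)})$ so as to upgrade the time-changed local martingale to a true one — has already been carried out in \autoref{Lentrance} and \autoref{Centrance}. The only points demanding a moment's attention are checking that the conjugated test function $g=h_{-\omega_{_-}}f$ remains in ${\mathcal C}^{\infty}_c(0,\infty)$, and keeping track of the power factors $x^{\pm\omega_{_-}}$ inside the pairings so that the two intertwining identities for $\langle\pi_t,g\rangle$ and $\langle\pi_t,\Aa_-g\rangle$ are correctly stated.
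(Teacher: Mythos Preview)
Your proof is correct and follows exactly the approach the paper intends: the corollary is stated without proof because it is an immediate transcription of \autoref{Centrance} via the conjugation $g=h_{-\omega_{_-}}f$, precisely as done in the analogous \autoref{c:mu-alpha-neg}. Your careful bookkeeping of the power factors and the verification that $g\in{\mathcal C}^{\infty}_c(0,\infty)$ are the only points needed, and you handle them correctly.
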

 
\section{Explosion of the stochastic model}
\label{s:explosion}

In this section, we discuss the behaviour of the stochastic growth-fragmentation
process in a simplified setting. 
In particular, we point out that,
when the Malthusian hypotheses from \autoref{s:ss} do not hold,
this stochastic model may experience explosion, in the sense that some arbitrarily small
compact sets contain infinitely many particles after a finite time.
We will focus on the case where $\alpha < 0$, though similar arguments
can be made for $\alpha > 0$.

\skippar
Assume that the measure $K$ is a probability measure on $[1/2,1)$
which is not equal to $\delta_{\frac{1}{2}}$, and denote by $Y$
a random variable with law $K$. Choose $c \in \R$ such that
$\E[\log(1-Y)] + c < 0 < \E[\log Y] + c$. 

We now set up the stochastic model. Let $\tree = \bigcup_{n \ge 0} \{L,R\}^n$,
where $\{L,R\}^0 = \{\varnothing\}$.
We view this as a binary tree, as follows. The root node $\eve$ gives rise
to child nodes $L$ and $R$; the former then has children $LL$ and $LR$, while
the latter has children $RL$ and $RR$, and so on. We introduce the
ancestry relationship `$\parenteq$' by saying that,
for individuals $u,u' \in \tree$, $u \parenteq u'$
if and only if there exists $u'' \in \tree$ such that $uu'' = u'$;
we also define the strict relation $u \parent u'$ to mean that
$u \parenteq u'$ but $u \ne u'$.

Let $\rays = \{L,R\}^\N$.
This is the set of infinite lines of descent,
or rays,
in $\tree$. For instance, $LLRRLRLRRRRL\dotsb \in \rays$
traces a line of descent starting at individual $\eve$,
and proceeding to $L$, then $LL$, then $LLR$, and so on.
If $u \in \tree$ and $v \in \rays$, we say (by slight
abuse of notation) that $u \parent v$ if and only if
there exists $v' \in \rays$ such that $uv' = v$.

To each $u \in \tree$, we assign, independently
of everything else,
a \emph{lifetime} $T_u$ which is
has an exponential distribution of rate 1,
and an \emph{offspring distribution} $Y_u$
which is distributed with law $K$. We then recursively
assign positions $\zeta_u$ to the individuals in $\tree$.
The root is positioned at a given point $x \in \R$, that is, 
$\zeta_{\eve} = x$.
Its descendents are positioned as follows:
\[ 
  \zeta_{uL} = \zeta_u + \log(1-Y_u) + cT_u \quad \text{and} \quad
  \zeta_{uR} = \zeta_u + \log(Y_u) + cT_u, \qquad u \in \tree.
\]
This gives a model in which each individual lives an exponential time,
dies, and (on average) scatters one child to the left and one to the right.

It will be convenient to introduce a model in which individuals also
move continuously, as follows.
For $u \in \tree$, define its birth time $a_u = \sum_{u'\parent u} T_{u'}$
and its death time $b_u = \sum_{u'\parenteq u} T_{u'} = a_u + T_u$. Its position
between those times is then given by $\xi_u(t) = \zeta_u + c(t-a_u)$,
for $a_u \le t < b_u$. By another abuse of notation, let us define also
the positions of a ray: for $v \in \rays$, let $\xi_v(t) = \xi_u(t)$, where
$u \in \tree$ is the unique individual with $a_u \le t < b_u$ and $u \parent v$.
We may now see the model as containing individuals which move to the right
at constant rate $c$, until an exponential clock rings and the individual
dies, scattering offspring to the left.

The model may also be viewed as a stochastic process 
$\Yy = (\Yy(t))_{t\ge 0}$, with
$$\Yy(t) = \sum_{u \in \tree} \delta_{\exp(\xi_u(t))} \Indic{a_u\le t< b_u},$$
taking values in the space $\Nn$ of locally finite point measures
and such that $\Yy(0) = \delta_{\e^x}$.
With this perspective, it is 
an important and useful fact that the process has the branching property.
Loosely speaking, this means that the behaviour of $(\Yy(t+s))_{s\ge 0}$
is given by collecting the atoms of $\Yy(t)$ and running from 
each one an independent copy of $\Yy$;
for a precise statement and proof see, for instance, \cite[Proposition 2]{BeCF}.
% for every $t\ge 0$,
% the process
% $(\xi_u(t+s) : s \ge 0; \, u \in \tree, \, a_u \le t+s < b_u)$
% is conditionally independent of the process
% $(\xi_u(s) : s\le t; \, u \in \tree, \, a_u \le s < b_u)$
% given the collection of points
% $(\xi_u(t) : u \in \tree, \, a_u \le t < b_u)$. Fur

The process $\Yy$ we have just described is a stochastic model
corresponding to the homogeneous fragmentation equation. 
In particular, if we define a collection of measures $(\mu_t)_{t\ge 0}$ via
\[ \ip{\mu_t}{f} = \E\bigl[\ip{\Yy(t)}{f}\bigr] = \E\biggl[ \sum_{u \in \tree} f(\exp(\xi_u(t)))\Indic{a_u \le t < b_u} \biggr],
  \qquad f \in \Ctest,
\]
then we obtain a solution to \eqref{EqFG} with $\alpha=0$ and $\mathcal{L}$ given as in \eqref{Eqop0};
we give some more details on this in \autoref{s:bps}.
This corresponds to
the function $\kappa$ satisfying, for $q \ge 0$,
\[ \kappa(q) = cq + \int_{[\frac{1}{2},1)} \bigl[ y^q+(1-y)^q-1 \bigr] \, K(\dd y)
  > q\Big(c + \int_{[\frac{1}{2},1)} \log y \, K(\dd y) \Bigr) > 0.
\]
Here, the first inequality holds because it holds for the integrands, and
the second inequality is by our assumption about $c$ at the beginning
of the section. Also, $\kappa(0) = 1$.
Thus, $\inf_{q \ge 0} \kappa(q) > 0$, and so the Malthusian hypothesis \eqref{EqMalthus},
which was an important assumption in \autoref{s:ss}, is
not satisfied for our model.

We now give the model corresponding to the self-similar equation. To do this,
we should first introduce the notion of a \emph{stopping line}. We say
that $S = (S_v)_{v\in\rays}$ is a stopping line if:
\begin{enumerate}[(i)]
  \item For every $v \in \rays$, $S_v$ is a stopping time for the natural filtration of $\xi_v$;
  \item If $u \in \tree$ and $v, v' \in \rays$ such that $u \parent v,v'$, then $\P(S_v = S_{v'} \given a_u \le S_v < b_u) = 1$.
\end{enumerate}
Now, for $v \in \rays$ and $\alpha \in \R$, let
\[ T_v(s) =  \int_0^s \e^{-\alpha \xi_v(r)} \, \dd r , \]
and denote its inverse by $S_v$. Then, $S(t) = (S_v(t))_{v \in \rays}$ is 
a stopping line for every $t \ge 0$.
If $u \in \tree$ is an individual
such that, for some $v \in \rays$ with $u\parent v$, $a_u\le S_v(t)< b_u$ holds,
then we define $S_u(t)$ to be equal to $S_v(t)$;
by property (ii) of the definition of a stopping line,
this does not depend on the choice of $v$.
We define
\[ X_v(t) = \exp(\xi_v(S_v(t))), \qquad v \in \rays, \; t \ge 0 , \]
% and 
% \[ \Xf(t) = \sum_{v \in \rays}' \delta_{\exp \xi_v(S_v(t))}, \qquad t \ge 0, \]
% where $\sum'$ indicates that we select a single representative from each collection 
% 
% We now define
and
\begin{equation}\label{e:def-Xf}
  \Xf(t) = \sum_{u \in \tree} \delta_{\exp \xi_u(S_u(t))} \Indic{a_u \le S_u(t) < b_u}, \qquad t \ge 0,
\end{equation}
where the sum is over only those $u$ for which $S_u(t)$ is defined.
The process $\Xf$ is called the
$\alpha$-self-similar fragmentation process. 
The stopping-line nature of $S$ means that the process $\Xf$ retains
the branching property; however, it is not clear that it should be locally finite,
and indeed,
our main result in this section is that $\Xf$ a.s.\ does \emph{not}
remain locally finite for all time:

\begin{prop}
  Let $\alpha < 0$. For any $a > 0$, there exists some random time $\sigma$ such that
  there are infinitely many individuals of $\Xf(\sigma)$ in the compact set $[1,1+a]$.
\end{prop}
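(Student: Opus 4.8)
The plan is to exploit the interplay between shattering along the leftmost ray and the self-similarity, building the desired time $\sigma$ from the descendants of that ray. First consider the ray $v_\ast=LLL\cdots$. In the Lévy time-scale, the position $\xi_{v_\ast}$ is a Lévy process with drift $c$ and negative jumps distributed as $\log(1-Y)$, hence of mean $c+\E[\log(1-Y)]<0$, so $\xi_{v_\ast}(s)\to-\infty$ a.s.; consequently the exponential functional $\sigma_0\coloneqq\int_0^\infty\e^{-\alpha\xi_{v_\ast}(r)}\,\dd r=\int_0^\infty\e^{\abs{\alpha}\xi_{v_\ast}(r)}\,\dd r$ is a.s.\ finite. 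In the notation of the construction this reads $T_{v_\ast}(\infty)=\sigma_0<\infty$: along $v_\ast$ the self-similar clock only runs up to the finite time $\sigma_0$. Writing $\eve=u_0\parent u_1\parent\cdots$ for the individuals on $v_\ast$ (so $u_{k+1}=u_kL$), each $u_k$ produces at its death a second child $w_k\coloneqq u_kR$, born at a self-similar time $\tau_k$ with $\tau_k\uparrow\sigma_0$ and at a position $\zeta_{w_k}$ which tends to $-\infty$ because $\xi_{v_\ast}$ does; set $m_k\coloneqq\e^{\zeta_{w_k}}\downto 0$. By the branching property, conditionally on the $\sigma$-field $\mathcal G$ generated by the ray $v_\ast$, the subtrees issued from the $w_k$ are independent, the $k$-th being a copy of $\Xf$ started from $\delta_{m_k}$ at time $\tau_k$.

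Next I would analyse a single such subtree, that is, a copy $\Xf^{(m)}$ of $\Xf$ started from $\delta_m$ with $m$ small. Its rightmost ray $v^\ast=RRR\cdots$ carries, in the Lévy time-scale, a Lévy process with drift $c$ and only negative jumps, issued from $\log m$; being of positive mean it tends to $+\infty$ and, being upward-continuous, first hits each level $\ell>\log m$ continuously and from below. The self-similar time $\tilde t(m)$ at which this ray first reaches size $1$ is $\int_0^\tau\e^{\abs{\alpha}\xi_{v^\ast}(s)}\,\dd s$, with $\tau$ the Lévy time at which $\xi_{v^\ast}$ hits $0$; the integrand is concentrated near the end of the climb and $\E[\tilde t(m)]$ stays bounded as $m\downto 0$, so $(\tilde t(m))_m$ is tight. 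Moreover, by the strong Markov property and the lack of memory of the lifetimes, once this ray reaches size $1$ it undergoes no branching for a further Lévy time $\delta$ with probability at least $\e^{-\delta}$, during which its position only drifts upward, by $c\delta$; choosing $\delta<\log(1+a)/c$, the corresponding particle stays in $[1,1+a]$ for a self-similar time interval of length at least $\delta$. Combining these facts with $\tau_k\uparrow\sigma_0<\infty$, I would deduce the existence of constants $\delta>0$, $p>0$ and a bounded interval $J\subset(0,\infty)$, all independent of $k$, such that, conditionally on $\mathcal G$, with probability at least $p$ the $k$-th subtree keeps at least one particle in $[1,1+a]$ throughout some (random) sub-interval of $J$ of length at least $\delta$.

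To conclude: conditionally on $\mathcal G$ these events are independent with probabilities at least $p$, so a conditional Borel--Cantelli argument gives that a.s.\ infinitely many occur. Fix a finite partition of $J$ into cells of length less than $\delta/2$; every sub-interval of length at least $\delta$ then contains a whole cell, so by the pigeonhole principle some single cell $J^\ast$ is contained in the dwelling sub-intervals of infinitely many successful subtrees. For any $\sigma\in J^\ast$, those infinitely many subtrees each contribute a particle of $\Xf(\sigma)$ in $[1,1+a]$, which is the assertion.

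I expect the crux to be the uniform-in-$k$ statement at the end of the second paragraph: one must show that each subtree places a particle into $[1,1+a]$ with probability bounded away from $0$, and at a self-similar time confined to a fixed bounded range, uniformly in $k$, despite the subtrees being issued from particles of size $m_k\downto 0$. This rests on the scaling relation $\Xf^{(m)}\eql m\cdot\Xf^{(1)}(m^{-\abs{\alpha}}\cdot)$ and on the fact that the self-similar time for the rightmost ray to climb back to size $1$ is, in law, an exponential functional of the driving Lévy process which stabilises as $m\downto 0$; the remaining ingredients---finiteness of $\sigma_0$, the no-branching/drift estimate, the conditional Borel--Cantelli step, the pigeonhole argument---are comparatively routine.
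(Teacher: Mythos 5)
Your proposal follows essentially the same path as the paper's proof: let the leftmost ray $L^\infty$ accumulate finite self-similar time while its position drifts to $-\infty$, examine the right-branching rays $L^kR^\infty$ spawned along the way, show they return to size $1$ within a uniformly bounded window of self-similar time with probability bounded away from zero, invoke Borel--Cantelli, and then extract a common instant $\sigma$ by a pigeonhole argument on a compact time interval.

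Two implementation details differ from the paper's version, and one deserves a caveat. First, you work with the \emph{first} passage time of level $1$ by the ray $R^\infty$ and then buy a dwelling window $[1,1+a]$ by paying the extra cost of the no-branching event (probability $\ge \e^{-\delta}$); the paper instead uses the \emph{last} passage time $L_1^+$, after which the ray is automatically $\ge 1$, so the dwelling window comes for free from the deterministic bounds $\dd S \le \dd t$ and drift $\le c$ on $\{X\ge 1\}$. Both work. Second, and more substantively, your uniformity-in-$k$ step is supported by the assertion that $\E[\tilde t(m)]$ stays bounded as $m\downto 0$; you do not prove this, and the moment bound is harder to justify directly than what is actually needed, namely tightness of $(\tilde t(m))_m$. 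The paper obtains the required uniform lower bound on $\ssP_x(L_1^+ \le t)$ for $x$ near $0$ from the Feller property of the pssMp extended to start at $0$ (via \cite{BY-entrance}) together with the Portmanteau theorem; the same device would replace your moment claim cleanly, and indeed your closing paragraph already gestures at exactly this weak-convergence argument. With that substitution, your proof is correct and matches the paper's in all essential respects.
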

\begin{proof}
  We will study rays $p_k = L^kR^\infty$ which follow the left-hand offspring
  for $k$ steps, and the right-hand offspring thereafter. Our first remark
  is that, if we define
  \[ \tau_0(v) = \inf\{ t\ge 0 : X_v(t) = 0\} , \qquad v \in \rays, \]
  then we have
  \[
    \tau_0 \coloneqq \tau_0(L^\infty)
    = T_{L^\infty}(\infty) = \int_0^\infty \e^{-\alpha \xi_{L^{\infty}}(t)}\, \dd t.
  \]
  Since $\xi_{L^\infty}$ is a Lévy process with negative mean (by our assumption on $c$)
  we obtain that $\tau_0 < \infty$ almost surely.

  Consequently, for any $\eta > 0$ there exists some infinite set $C$ such that,
  for $k \in C$,
  $T_{p_k}(b_{L^k}) \in (\tau_0-\eta, \tau_0)$ and $X_{p_k}(T_{p_k}(b_{L^k})) \to 0$ as $k \to \infty$.
  We define 
  \[ L_1^+(v) = \sup\{ t\ge 0 : X_v(t) \le 1 \} , \qquad v \in \rays, \]
  which is the \emph{last passage time} of the level 1 by the process $X_v$;
  then, for $k \in C$, we have
  \[ L_1^+(p_k) = T_{p_k}(b_{L^k}) + \tilde L_1^+(R^\infty) , \]
  where $\tilde L_1^+(R^\infty)$ is the last passage time of the level 1
  by $\tilde X_{R^\infty}$, computed for an
  independent self-similar fragmentation process started at $X_{p_k}(b_{L^k})$.

  We are therefore reduced to studying first passage times of the
  $(-\alpha)$-pssMp $X_{R^\infty}$ corresponding to a spectrally negative Lévy process $\xi_{R^\infty}$
  started from a level $x < 0$ with Laplace
  exponent
% \[ \Phi_{R^\infty}(q) = cq + \int_{[-\log 2,0)} (\e^{qx}-1) \nu(\log p_1 \in \dd x). \]
  \[ \Phi_{R^\infty}(q) = cq + \int_{[\frac{1}{2},1)} (y^q-1) K(\dd y). \]
  We seek  $t$ such that, with positive probability (not depending
  on $x$), $\tilde L_1^+({R^\infty}) \le t$.

  The first observation is that $\Phi'_{R^{\infty}}(0+)>0$, which implies that the process $X_{R^\infty}$
  can be extended to start at zero; it is then Feller on $[0,\infty)$. Furthermore,
  the pssMp drifts to $+\infty$ as $t\to \infty$.
  Hence, we pick $\epsilon>0$ and $t \ge 0$ such that $\P_0(L_1^+ \le t) \ge 2\epsilon$.
  By the Portmanteau theorem, $\liminf_{x \to 0}\P_{x}(L_1^+ \le t) \ge 2\epsilon$ also;
  and therefore for $x$ sufficiently close to zero, $\P_x(L_1^+ \le t) \ge \epsilon$.
  Applying the Borel-Cantelli lemma to the paths referred to above,
  there are then infinitely many $k \in C$ such that
  \[ L_1^+(p_k) \le T_{p_k}(b_{L^k}) + t \le \tau_0 + t \]
  with probability 1.

  We therefore have infinitely many paths whose last passage time of $1$
  occurs in a (random) compact interval. In particular, there must exist
  some finite random time $\sigma$ such that for every $\delta> 0$, 
  there are infinitely many paths which cross $1$ for the last
  time in $(\sigma-\delta,\sigma)$. 
  Since the particles are large, the time-change is bounded,
  in that $S_{p_k}(L^+_1(p_k)+u)-S_{p_k}(L_1^+(p_k)) \le u$ for all $u\ge 0$.
%   Using the boundedness of the time-change 
  Furthermore, the processes $\xi_{p_k}$
  can grow at most at rate $c$;
  thus, picking $\delta < c^{-1}\log(1+a)$
%   Picking $\delta < a/c$ 
  ensures that,
  at time $\sigma$, all the selected particles are in $[1,1+a]$,
  which completes the proof.
\end{proof}

\skippar
This result illustrates one example where the Malthusian hypothesis, under we
examined the growth-fragmentation equation, fails, and where the stochastic model $\Xf$
(whose mean measure
could otherwise be expected to give a solution)
does not remain locally finite. However, since
uniqueness generally fails for the self-similar growth-fragmentation equation, it
does not immediately imply that there is no global solution of \eqref{EqFG}.

The procedure
of creating the growth-fragmentation process $\Yy$ can be carried out under
the general conditions on $a$, $b$ and $K$ given in the main body of the paper
(this is done in \citet{BeCF}) and the self-similar
time-change \eqref{e:def-Xf} can also be applied in this general context for any $\alpha \ne 0$
(see \cite[Corollary 2]{BeMGF}.) However,
it remains an open problem to determine necessary and sufficient conditions for
the process $\Xf$ to be locally finite at all times, and to decide when global solutions
of \eqref{EqFG} exist.

\section{Branching particle system and many-to-one formulas}
\label{s:bps}

In this concluding section, we aim to clarify the connection between
our work and the
`spine' or `tagged fragment' approach to branching particle systems
and fragmentation processes, with the hope of explaining the source
of the solutions obtained in
\autoref{T1} and Corollaries \ref{c:mu-alpha-neg} and \ref{c:mu-alpha-pos}.
We refer the reader to the survey of \citet{HH-spine} for results in the context
of branching processes, \citet[\S 3.2.2]{Ber-frag} for the background on
fragmentation processes, and \citet{Haas} for the use of the tagged fragment
in solving the classical fragmentation equation.
For the sake of simplicity, we focus on the case when
the dislocation measure $K$ is finite
and the operator ${\mathcal L}$ has the form \eqref{Eqop0}.

% In this section,
% we briefly discuss a further probabilistic interpretation of the growth-fragmentation equation,
% aiming at explaining better the remarkable form of the solutions obtained in
% \autoref{T1} and Corollaries \ref{c:mu-alpha-neg} and \ref{c:mu-alpha-pos}.
% For the sake of simplicity, we focus on the case when
% the dislocation measure $K$ is finite
% and the operator ${\mathcal L}$ has the form \eqref{Eqop0}.

\medskip\noindent
Let us assume that we can construct a system of branching particles in $(0,\infty)$, 
with the following dynamics: particles evolve independently of one other, 
each particle located at $x>0$ grows at rate $cx^{\alpha+1}$, 
and a particle located at $x>0$ is replaced by 
two particles located respectively at $xy$ and $x(1-y)$ at rate $x^{\alpha} K(\d y)$. 
Let $\mathbf{Z}(t)=(Z_i(t))_{i\ge 1}$ denote the collection of particles in the system at time $t\geq 0$,
starting at time $0$ from a single particle located at $1$.
Informally, the verbal description of the dynamics of the particle system suggests 
that for every test function $f\in{\mathcal C}^{\infty}_c(0,\infty)$,
the functional $F(\mathbf{z})=\sum_i f(z_i)$ for $\mathbf{z}=(z_i)_{i\ge 1}$
belongs to the domain of the infinitesimal generator ${\mathcal G}$ 
of the process $(\mathbf{Z}(t))_{t\geq 0}$, and that
\[
 \mathcal{G}F(\mathbf{z})=\sum_{i\ge 1} z_i^{\alpha} \biggl( c z_i f'(z_i)+\int_{[1/2,1)}(f(y\mid z_i)-f(z_i))K(\d y)\biggr).
\]
Therefore, if we write $\mu_t$ for the intensity measure of $\mathbf{Z}(t)$, i.e.
\[ \langle \mu_t,f\rangle = \E(F(\mathbf{Z}(t)) = \E\biggl(\sum_i f(Z_i(t))\biggr), \]
then Kolmogorov's forward equation entails that $(\mu_t)_{t\geq 0}$ solves the fragmentation equation \eqref{EqFG}.

% Explanation B
The analysis of the system $(\mathbf{Z}(t))_{t\ge 0}$ can be significantly
simplified by identifying a \emph{spine} among the particles, and formulating
questions about the entire system in terms of just the spine particle via a \emph{many-to-one}
formula. This proceeds roughly as follows. Suppose that we can identify
a function $(t,z) \mapsto \varphi(t,z) > 0$ such that
the process $M(t) = \sum_{i \ge 1} \varphi(t,Z_i(t))$ is a martingale. We introduce
a new probability measure $\tilde \P$ by means of a martingale change of measure using $M$,
simultaneously identifying one of the particles to be the spine; specifically, we
identify particle $i$ of $\Zb(t)$ as the spine with $\tilde \P$-probability
proportional to $\varphi(t,Z_i(t))$. 

At each time $0 \le s\le t$, the spine particle has a unique ancestor,
and we define the random variable $W(s)$
to be the position of this ancestor.
We now aim to identify the law of certain functionals of $\Zb$
in terms of the law of $W$. In particular, if we define
% $\rho_t(\dd z) = \varphi(t,z)\mu_t(\dd z)$
$\rho_t$ to be the law of $W(t)$, we obtain
\[
  \ip{\mu_t}{f} = \E\biggl( \sum_{i\ge 1} f(Z_i(t)) \biggr)
  = \tilde \E\biggl(\frac{f(W(t))}{\varphi(t,W(t))} \biggr)
  = \ip{\rho_t}{f/\varphi(t,\cdot)},
\]
which is known as a many-to-one formula.

The spine method for solving \eqref{EqFG}
can be summarised as follows. We first use the dynamics of the branching particle system
and the effect of the martingale change of measure to identify the process $W$.
The one-dimensional distributions of $W$ give the collection of measures 
$(\rho_t)_{t \ge 0}$, and then the many-to-one formula
gives us an explicit description of $(\mu_t)_{t \ge 0}$.
%%%%%%%%%%%%%%%%%%%%%%%%%

\medskip\noindent
The method we have sketched can be made rigorous in the homogeneous case $\alpha=0$,
even in the more general situation when the dislocation measure $K$ is infinite and fulfills \eqref{Eqcondnu}.
More precisely,
one can take $\varphi(t,z)=\exp(-\kappa(\omega)t)z^{\omega}$ 
for any $\omega\in \dom \kappa$, 
and then the process $W(t)$ is the exponential of a L\'evy process
with no positive jumps and Laplace exponent $\Phi_{\omega} = \kappa(\omega+\cdot)-\kappa(\omega)$;
this is the justification for \autoref{r:LE}.
We stress, however, that the general self-similar case $\alpha\neq 0$ is far less simple.
In particular, it is not clear whether the branching particle system can indeed
be constructed since, as noted in the previous section, explosion may occur.
A fairly general class of growth-fragmentation processes was introduced recently 
in \cite{BeMGF} by means of a Crump-Mode-Jagers process;
however, although it is expected to be related to growth-fragmentation equations as described above,
so far no many-to-one formula is known to make the connection rigorous.

\acks

This work was submitted while the second author was at the University of Zurich, Switzerland.
We thank Robin Stephenson for drawing to our attention some mistakes in an earlier draft,
and the anonymous referee for their helpful comments.

% Place the text of your acknowledgements after the \acks command.
% \acks generates the heading "Acknowledgements".
% If you wish to make only one acknowledgement, use \ack.
% \ack generates the heading "Acknowledgement".

%% This bibliography will display incorrectly in the absence of the natbib package.


\begin{thebibliography}{22}

\bibitem[Bertoin(1996)]{BeLP}
{\sc Bertoin, J.} (1996).
\newblock {\em L\'evy processes} vol.~121 of {\em Cambridge Tracts in
  Mathematics}.
\newblock Cambridge University Press, Cambridge.

\bibitem[Bertoin(2006)]{Ber-frag}
{\sc Bertoin, J.} (2006).
\newblock {\em Random fragmentation and coagulation processes} vol.~102 of {\em
  Cambridge Studies in Advanced Mathematics}.
\newblock Cambridge University Press, Cambridge.

\bibitem[Bertoin(2014)]{BeCF}
{\sc Bertoin, J.} (2014).
\newblock Compensated fragmentation processes and limits of dilated
  fragmentations.
\newblock To appear, \textit{Ann.\ Probab.} Preprint,
  \href{https://hal.archives-ouvertes.fr/hal-00966190v2}{hal-00966190v2}.

\bibitem[Bertoin(2015)]{BeMGF}
{\sc Bertoin, J.} (2015).
\newblock Markovian growth-fragmentation processes.
\newblock To appear, \textit{Bernoulli}. Preprint,
  \href{https://hal.archives-ouvertes.fr/hal-01152370v1}{hal-01152370v1}.

\bibitem[Bertoin and Yor(2002{\natexlab{a}})]{BY-entrance}
{\sc Bertoin, J. and Yor, M.} (2002).
\newblock The entrance laws of self-similar {M}arkov processes and exponential
  functionals of {L}\'evy processes.
\newblock {\em Potential Anal.\/} {\bf 17,} 389--400.

\bibitem[Bertoin and Yor(2002{\natexlab{b}})]{BY-moments}
{\sc Bertoin, J. and Yor, M.} (2002).
\newblock On the entire moments of self-similar {M}arkov processes and
  exponential functionals of {L}\'evy processes.
\newblock {\em Ann. Fac. Sci. Toulouse Math. (6)\/} {\bf 11,} 33--45.

\bibitem[Biggins(1977)]{Bigg}
{\sc Biggins, J.~D.} (1977).
\newblock Chernoff's theorem in the branching random walk.
\newblock {\em J. Appl. Probability\/} {\bf 14,} 630--636.

\bibitem[Blumenthal and Getoor(1968)]{BG-mppt}
{\sc Blumenthal, R.~M. and Getoor, R.~K.} (1968).
\newblock {\em Markov processes and potential theory}.
\newblock Pure and Applied Mathematics, Vol. 29. Academic Press, New York.

\bibitem[Chaumont and Rivero(2007)]{CR-cond}
{\sc Chaumont, L. and Rivero, V.} (2007).
\newblock On some transformations between positive self-similar {M}arkov
  processes.
\newblock {\em Stochastic Process. Appl.\/} {\bf 117,} 1889--1909.

\bibitem[Davies(2002)]{Davies}
{\sc Davies, B.} (2002).
\newblock {\em Integral transforms and their applications} third~ed. vol.~41 of
  {\em Texts in Applied Mathematics}.
\newblock Springer-Verlag, New York.

\bibitem[Doumic and Escobedo(2015)]{DouEsc}
{\sc Doumic, M. and Escobedo, M.} (2015).
\newblock Time asymptotics for a critical case in fragmentation and
  growth-fragmentation equations.
\newblock Preprint,
  \href{https://hal.archives-ouvertes.fr/hal-01080361v2}{hal-01080361v2}.

\bibitem[Ethier and Kurtz(1986)]{EK-mp}
{\sc Ethier, S.~N. and Kurtz, T.~G.} (1986).
\newblock {\em Markov processes}.
\newblock John Wiley \& Sons, Inc., New York.

\bibitem[Haas(2003)]{Haas}
{\sc Haas, B.} (2003).
\newblock Loss of mass in deterministic and random fragmentations.
\newblock {\em Stochastic Process. Appl.\/} {\bf 106,} 245--277.

\bibitem[Haas and Rivero(2012)]{HR-Yaglom}
{\sc Haas, B. and Rivero, V.} (2012).
\newblock Quasi-stationary distributions and {Y}aglom limits of self-similar
  {M}arkov processes.
\newblock {\em Stochastic Process. Appl.\/} {\bf 122,} 4054--4095.

\bibitem[Hardy and Harris(2009)]{HH-spine}
{\sc Hardy, R. and Harris, S.~C.} (2009).
\newblock A spine approach to branching diffusions with applications to
  {$\mathcal{L}^p$}-convergence of martingales.
\newblock In {\em S\'eminaire de {P}robabilit\'es {XLII}}.
\newblock vol.~1979 of {\em Lecture Notes in Math.} Springer, Berlin
  pp.~281--330.

\bibitem[Kyprianou(2014)]{Kypr2}
{\sc Kyprianou, A.~E.} (2014).
\newblock {\em Fluctuations of {L}\'evy processes with applications} second~ed.
\newblock Universitext. Springer, Heidelberg.

\bibitem[Lamperti(1972)]{Lamp}
{\sc Lamperti, J.} (1972).
\newblock Semi-stable {M}arkov processes. {I}.
\newblock {\em Z. Wahrscheinlichkeitstheorie und Verw. Gebiete\/} {\bf 22,}
  205--225.

\bibitem[Protter(2004)]{Pro-stoch-calc}
{\sc Protter, P.~E.} (2004).
\newblock {\em Stochastic integration and differential equations} second~ed.
  vol.~21 of {\em Applications of Mathematics (New York)}.
\newblock Springer-Verlag, Berlin.

\bibitem[Rivero(2005)]{Riv-re1}
{\sc Rivero, V.} (2005).
\newblock Recurrent extensions of self-similar {M}arkov processes and
  {C}ram\'er's condition.
\newblock {\em Bernoulli\/} {\bf 11,} 471--509.

\bibitem[Rivero(2012)]{Rivero}
{\sc Rivero, V.} (2012).
\newblock Tail asymptotics for exponential functionals of {L}\'evy processes:
  the convolution equivalent case.
\newblock {\em Ann. Inst. Henri Poincar\'e Probab. Stat.\/} {\bf 48,}
  1081--1102.

\bibitem[Sato(1999)]{Sato}
{\sc Sato, K.} (1999).
\newblock {\em L\'evy processes and infinitely divisible distributions} vol.~68
  of {\em Cambridge Studies in Advanced Mathematics}.
\newblock Cambridge University Press, Cambridge.

\bibitem[Vuolle-Apiala(1994)]{VA-Ito}
{\sc Vuolle-Apiala, J.} (1994).
\newblock It\^o excursion theory for self-similar {M}arkov processes.
\newblock {\em Ann. Probab.\/} {\bf 22,} 546--565.

\end{thebibliography}
\end{document}